\theoremstyle{plain}
\newtheorem{theorem}{Theorem}[section]
\newtheorem{proposition}[theorem]{Proposition}
\newtheorem{lemma}[theorem]{Lemma}
\newtheorem{corollary}[theorem]{Corollary}
\theoremstyle{definition}
\newtheorem{definition}{Definition}[section]
\theoremstyle{remark}
\newtheorem{remark}{Remark}[section]
\newtheorem{example}{Example}[section]
\newcommand{\bbC}{{\mathbb C}}   
\newcommand{\CC}{{\mathbb{C}}}
\def\Cessum{{\lower 1.5pt \hbox{\large C}}{-}\!\!\!\sum}
\def\CHI{\hbox{\raise .5ex \hbox{$\chi$}}}
\def\esssup{\operatornamewithlimits{ess\,sup}}
\newcommand{\GLsp}{{\boldsymbol G\kern-0.1em\boldsymbol L}}
\newcommand{\inner}[2]{\langle #1,#2\rangle}   
\def\LiR{{\Lisp(\Rst)}}   
\def\LtR{{\Ltsp(\Rst)}}   
\def\Msp{{\boldsymbol M}}   
\def\mv1{\Msp_v^1}   
\def\NN{\mathbb{N}}
\newcommand{\norm}[1]{\lVert#1\rVert}   
\newcommand{\RR}{\mathbb{R}}   
\def\sabs{{{\raise 0.5pt \hbox{\footnotesize $|$}}}}
\newcommand{\sgn}{\operatorname{sgn}}   
\def\shah{\sqcup \hspace{-0.15em}\sqcup}   
\def\shah1{{\makebox[2.3ex][s]{$\sqcup$\hspace{-0.15em}\hfill $\sqcup$}}}   
\def\slb{{{\raise 0.5pt \hbox{\footnotesize $[$}}}}
\def\slcb{{{\raise 0.5pt \hbox{\footnotesize $\{$}}}}
\def\slp{{{\raise 0.5pt \hbox{\footnotesize $($}}}}   
\newcommand{\snorm}{{{\raise 0.5pt \hbox{\footnotesize $\|$}}}}   
\def\srb{{{\raise 0.5pt \hbox{\footnotesize $]$}}}}
\def\srcb{{{\raise 0.5pt \hbox{\footnotesize $\}$}}}}
\def\srp{{{\raise 0.5pt \hbox{\footnotesize $)$}}}}
\def\supp{\operatorname{supp}}   
\def\UU{ {\bf U}}   
\newcommand{\ZZ}{\mathbb{Z}}
\newcommand{\bd}{\mathbf}   
\renewcommand{\LtR}{\mathbf L^2(\mathbb{R})}
\renewcommand{\LiR}{\mathbf L^1(\mathbb{R})}
\newcommand{\AAi}{\mathcal A_1}
\newcommand{\AAm}{\mathcal A_m}
\renewcommand{\UU}{\mathcal U}
\newcommand{\Eye}{\operatorname{E}_{y,\epsilon}}
\def\esssup{\mathop{\operatorname{ess~sup}}}
\def\supp{\mathop{\operatorname{supp}}}
\newcommand{\abs}[1]{|#1|}
\newcommand{\LeftEqNo}{\let\veqno\@@leqno}
\DeclareFontFamily{U}{mathx}{\hyphenchar\font45}
\DeclareFontShape{U}{mathx}{m}{n}{
      <5> <6> <7> <8> <9> <10>
      <10.95> <12> <14.4> <17.28> <20.74> <24.88>
      mathx10
      }{}
\DeclareSymbolFont{mathx}{U}{mathx}{m}{n}
\DeclareMathAccent{\widecheck}{0}{mathx}{"71}
\DeclareMathAccent{\wideparen}{0}{mathx}{"75}
\newcommand{\LtD}{\mathbf L^2(D)}
\newcommand{\oscUGd}{\text{osc}_{\UU^\delta,\Gamma}}
\begin{document}

 \author{Nicki Holighaus$^\dag$}
 \address{$^\dag$ Acoustics Research Institute Austrian Academy of Sciences, Wohllebengasse 12--14, A-1040 Vienna, Austria}
 \email{nicki.holighaus@oeaw.ac.at, peter.balazs@oeaw.ac.at}
 \author{Christoph Wiesmeyr$^*$}
 \address{$^*$ NuHAG, Faculty of Mathematics, Oskar-Morgenstern-Platz 1, A-1090 Vienna, Austria and \newline AIT Austrian Institute of Technology GmbH, Donau-City-Straße 1, A-1220 Vienna, Austria}
 \email{christoph.wiesmeyr.fl@ait.ac.at}
 \author{Peter Balazs$^\dag$}
 
 \title[Warped time-frequency representations, Part II: Integral transforms]{Construction of warped time-frequency representations on nonuniform frequency scales, Part II: Integral transforms, function spaces, atomic decompositions and Banach frames}
 
 \begin{abstract}
   We present a novel family of continuous linear time-frequency transforms adapted to a multitude of (nonlinear) frequency scales. 
   Similar to classical time-frequency or time-scale representations, the representation coefficients are obtained as inner 
   products with the elements of a continuously indexed family of time-frequency atoms. 
   These atoms are obtained from a single prototype function, by means of modulation, translation and warping. 
   By warping we refer to the process of nonlinear evaluation according to a bijective, increasing function,
   the warping function. 
    
   Besides showing that the resulting integral transforms fulfill certain basic, but essential properties, 
   such as continuity and invertibility, we will show that a large subclass of warping functions gives 
   rise to families of generalized coorbit spaces, i.e. Banach spaces of functions whose representations possess a certain localization. 
   Furthermore, we obtain sufficient conditions for subsampled warped time-frequency systems
   to form atomic decompositions and Banach frames. 
   To this end, we extend results previously presented by Fornasier and Rauhut
   to a larger class of function systems via a simple, but crucial modification.
   
   The proposed method allows for great flexibility, but by choosing 
   particular warping functions we also recover classical time-frequency representations, 
   e.g. $F(t) = ct$ provides the short-time Fourier transform and $F(t)=\log_a(t)$ provides wavelet transforms.
   This is illustrated by a number of examples provided in the manuscript.
 \end{abstract}

 \maketitle
 
 

\section{Introduction}

  This is the second part of a $2$ manuscript series about the properties of a 
novel family of time-frequency representations, see~\cite{howi14}. In this paper, we introduce the 
notion of (continuous) warped time-frequency transforms, a class of integral transforms 
representing functions in phase space with respect to nonlinear frequency 
scales. The goals of this contribution are showing the following properties: 
  \begin{enumerate}
   \item[(a)] These transforms possess some basic, but central properties, 
namely continuity and invertibility in a Hilbert space setting. The latter is 
obtained through a variant of Moyal's formula~\cite{mo49,gr01}.
   \item[(b)] They give rise to classes of generalized coorbit spaces, i.e. 
nested Banach spaces of functions with a certain localization in the associated 
phase space.
   \item[(c)] They are stable under a sampling operation, yielding atomic 
decompositions and Banach frames of warped time-frequency systems.
  \end{enumerate}
  In order to prove item (c), we will introduce a a slight modification to the 
discretization theory for generalized coorbit spaces presented in~\cite{fora05}, 
cf. Section \ref{sec:genosc}, 
enabling 
discretization results for our own construction. The first part of this 
series~\cite{howi14} investigates the construction of (Hilbert space) frames by 
means of discrete warped time-frequency systems. 
  
  In the last decades, time-frequency representations, in particular short-time 
Fourier~\cite{ga46,gr01} and wavelet~\cite{da92,dagrme86} transforms, have become 
indispensable tools in many areas from theoretical and applied 
mathematics to physics and signal processing. 
These classical time-frequency schemes measure the time-frequency distribution 
of a function as the correlation of that function with a family of 
time-frequency atoms. These atoms originate from the application of a set of 
unitary operators, translations and modulations in the short-time Fourier 
transform (STFT) and translations and dilations in the wavelet transform (WT), 
to a prototype function or \emph{mother wavelet}. By the uncertainty 
principle~\cite{fosi97,havin1994uncertainty}, no mother wavelet can be arbitrarily concentrated in time 
and frequency simultaneously and thus the choice of the prototype function 
completely determines the time-frequency trade-off of the representation, i.e. 
constant resolution in the case of STFT 
and resolution strictly proportional to the center frequency for wavelets. 
  
  This rigidity of classical time-frequency systems, particularly the fixed 
resolution of the STFT, has lead to the development of more general schemes for 
extracting time-frequency information from a function. Some prominent examples 
include the \emph{$\alpha$-transform}~\cite{fefo06,cofe78,fo89,hona03}, 
sometimes also referred to as flexible Gabor-Wavelet transform, in the 
continuous world and \emph{generalized shift-invariant 
systems}~\cite{rosh04,helawe02}, also known as nonuniform (analysis) filter banks in 
the signal processing community. Also of note are the countless variations on 
and extensions of the wavelet scheme, including but not limited to wavelet 
packets~\cite{coifman1994signal}, shearlets~\cite{kula12-alt,dakustte09}, curvelets~\cite{candes2005continuous} 
and ridgelets~\cite{candes1999ridgelets}. The previously mentioned transforms rely on the 
variation of resolution along frequency. The equivalent concept for variation 
along time are nonstationary Gabor 
systems~\cite{badohojave11,doma12,doma13-1,ho14-1}, which consider semi-regular 
modulations of a family of prototype functions that vary over time.
  
  Many applications of time-frequency representations require the transform used 
to be invertible, or more specifically bounded and boundedly invertible. The 
appropriate tool for the analysis of invertibility properties of 
time-frequency systems on Hilbert spaces is frame theory~\cite{dusc52,ch03}, 
given a countable family of time-frequency atoms. For uncountable families the theory of continuous 
frames~\cite{alanga93,alanga00} is appropriate. Whenever a family of time-frequency 
atoms $\Phi$ forms a (discrete or continuous) frame for a Hilbert space 
$\mathcal H$, then the following automatically hold:
  \begin{itemize}
   \item Every function $f\in \mathcal H$ is uniquely determined by its inner 
product with the frame elements, and
   \item every function $f\in \mathcal H$ can be written as a superposition of 
the frame elements with norm-bounded coefficients.
  \end{itemize}
  If we desire to analyze or decompose functions contained not in a Hilbert, but 
in a Banach space $\mathcal B$, then the two properties above cease to be 
equivalent. In that case, we have to determine separately whether $\Phi$ forms a 
  Banach frame and/or atomic decomposition~\cite{gr91,tr92} for 
$\mathcal B$. Where applicable, coorbit theory and its various 
generalizations~\cite{fegr89,fegr89-1,gr91,fora05} yield the appropriate Banach spaces for such an 
analysis, see below.
  
  In this contribution, we introduce a novel family of time-frequency representations 
  adapted to nonlinear frequency scales. Uniquely determined by the choice of a single prototype
  atom and a warping function that determines the desired frequency scale, our construction 
  provides a family of time-frequency atoms with uniform frequency resolution \emph{on the chosen frequency scale}.
  For particular choices of the warping function, we recover the continuous short-time Fourier and wavelet transforms.
  Hence, the proposed \emph{warped time-frequency representations} can be considered a unifying framework for a
  larger class of time-frequency systems.
  
  We show that the transforms in this family provide continuous representations (considered as functions in phase space). 
  Furthermore, the proposed systems form continuous tight frames, even satisfying orthogonality relations similar to 
  Moyal's formula~\cite{mo49,gr01} for the STFT.
  
  We obtain coorbit spaces associated to each frequency scale, i.e. classes of Banach spaces that classify the time-frequency 
  behavior of a function in terms of the corresponding warped time-frequency representation. Through a minor extension of the generalized 
  coorbit theory by Fornasier and Rauhut~\cite{fora05}, we can also prove sufficient conditions for countable subfamilies 
  of warped time-frequency atoms to form Banach frames and atomic decompositions for these coorbit spaces.
  
  \subsection{Related work}
  
   The idea of a logarithmic warping of the frequency axis to obtain wavelet systems from a system of translates is not entirely new and was first used in the proof of the so called painless conditions for wavelets systems~\cite{dagrme86}. However, the idea has never been relaxed to other frequency scales so far.
 While the parallel work by Christensen and Goh \cite{chgo13} focuses on exposing the duality between Gabor and wavelet systems via the mentioned logarithmic warping, we allow for more general warping functions to generate time-frequency transformations beyond wavelet and Gabor systems.
 The warping procedure we propose has already proven useful in the area of graph signal processing~\cite{hoshvawi13}.
 
 A number of methods for obtaining \emph{warped time frequency representations} have been proposed, e.g. by applying a unitary basis transformation to Gabor or wavelet atoms \cite{bajo93-1,ba94-2,caev98,doevma13}. Although unitary transformations bequeath basis (or frame) properties to the warped atoms, the warped system provides an undesirable, irregular time-frequency tiling, see \cite{caev98}.
 
 Closer to our own approach, Braccini and Oppenheim~\cite{brop74}, as well as Twaroch and Hlawatsch~\cite{hltw98}, propose a warping of filter bank transfer functions only, by defining a unitary warping operator. However, in ensuring unitarity, the authors give up the property that warping is shape preserving when observed on the warped frequency scale. In this contribution, we trade the unitary operator for a shape preserving warping.
 
 A more traditional approach trying to bridge the gap between the linear frequency scale of the short-time Fourier transform and the logarithmic scale associated to the wavelet transform is the \emph{$\alpha$-transform}~\cite{fefo06,cofe78,fo89,hona03} that employs translation, modulation and dilation operators with a fixed relation between modulation and dilation, determined by the parameter $\alpha\in [0,1]$. For $\alpha = 0$, the short-time Fourier transform is obtained, while the limiting case $\alpha = 1$ provides a system with logarithmic frequency scale similar, but not equivalent, to a wavelet system. Both our construction and the $\alpha$-transform can be considered special cases inside the framework of continuous nonstationary Gabor transforms, see \cite{spexxl14}, or the equivalent generalized translation-invariant systems~\cite{jale14}.
 
 Coorbit theory and discretization results for time-frequency systems on Banach spaces date back to the seminal work of Feichtinger and Gr\"ochenig~\cite{fegr88,fegr89,fegr89-1,gr91}. Their results are heavily tied to the association of a time-frequency system to a group, e.g. the Heisenberg group and STFT or the affine group and wavelet transforms. More precisely, the time-frequency atoms are obtained through application of a square-integrable group representation to a prototype atom. There have been several attempts to loosen these restrictions to accommodate other group-related transforms, e.g. the 
 $\alpha$-transform~\cite{daforastte08} and and shearlet transform~\cite{kula12-alt,dakustte09}, see also the references given in~\cite{fora05}. Finally, the work of Fornasier and Rauhut~\cite{fora05} completely abolished the need for an underlying group in favor of general continuous frames that satisfy certain regularity conditions. Since our systems lack the relation to a group representation, the starting point of our investigation is a minor extension of their results.
  
 \subsection{Structure of this contribution}
   In the next section, we review some necessary theory and notation, including a short overview of the results presented in \cite{fora05}, the foundation on which the rest of this manuscript is built. In Section \ref{sec:genosc}, we introduce a minor but useful extension to these results that allows the treatment of the systems we wish to construct, but also the (intuitive) construction of Banach frames and atomic decompositions for the STFT using the Fornasier-Rauhut theory. The rest of the paper is focused on warped time-frequency representations, their definition and basic properties are presented in Section \ref{sec:warp}. Section \ref{sec:warpedcoorbits} is concerned with the construction of the associated test function spaces that in turn enable the definition of the corresponding coorbit spaces. Finally, Section \ref{sec:decomps} treats discretization results for warped time-frequency systems on the associated coorbit spaces.

\section{Preliminaries} 
   
  We use the following normalization of the Fourier transform
\begin{equation}
  \hat f(\xi) := \mathcal{F}f = \int_\RR f(t) e^{-2 \pi i t \xi} \; dt, \text 
{for all } f\in\LiR
\end{equation}
and its unitary extension to $\LtR$. The inverse Fourier transform is denoted by 
$\check{f} := \mathcal F^{-1}f$ Further, we require the \emph{modulation 
operator} and the \emph{translation operator} defined by 
$\bd{M}_\omega f = f\cdot e^{2\pi i\omega(\cdot)}$ and $\bd T_x f = f(\cdot -x)$ 
respectively for all $f \in \LtR$. 
The composition of two functions $f$ and $g$ is denoted by $f\circ g$. By a 
superscript asterisk ($^*$), we denote the adjoint of an operator and the 
anti-dual of a Banach space, i.e. the space of all continuous, conjugate-linear 
functionals on the space. The Landau notation $\mathcal O(f)$  
denotes all functions that do not grow faster than $f$.

Let $\mathcal H$ be a separable Hilbert space and $(X,\mu)$ a locally 
compact, $\sigma$-compact Hausdorff space with positive Radon measure $\mu$ on $X$. A nontrivial 
Banach space $(Y,\|\cdot\|_Y)$ of functions on $X$, continuously embedded in 
$\bd L^1_{loc}(X,\mu)$, is \emph{solid}, if for all $G\in \bd L^1_{loc}(X,\mu)$ 
and $H\in Y$
\begin{equation}
  |G(x)|\leq |H(x)| \text{ a.e. } \Rightarrow\ G\in Y \text{ and } \|G\|_Y \leq 
\|H\|_Y.
\end{equation}

A collection $\Psi = \{\psi_x\}_{x\in X}$ of functions $\psi_x\in\mathcal H$ is 
called a \emph{continuous frame}, if there are $0 < A\leq B <\infty$, such that
\begin{equation}
  A\|f\|_{\mathcal H}^2 \leq \int_X |\langle f,\psi_x\rangle|^2 d\mu(x) \leq 
B\|f\|_{\mathcal H}^2, \text{ for all } f\in\mathcal H,
\end{equation}
and the map $x\mapsto \psi_x$ is weakly continuous. A frame is called 
\emph{tight}, if $A=B$. For any frame, the \emph{frame operator} defined (in the 
weak sense) by
\begin{equation}
  \bd S_\Psi~:~ \mathcal H\rightarrow \mathcal H,\ \bd S_\Psi f := \int_X 
\langle f,\psi_x\rangle \psi_x d\mu(x),
\end{equation}
is bounded, positive and boundedly invertible~\cite{alanga93,rahim06}.

A \emph{kernel} on $X$ is a function $K:X\times X\rightarrow \CC$. Its 
application to a function $G$ on $X$ is denoted by 
\begin{equation}
  K(G)(x):= \int_X K(x,y)G(y)~d\mu(y).
\end{equation}

Although the theory in Sections \ref{ssec:coorbit}, \ref{ssec:discret} and 
\ref{sec:genosc} are valid in this general setting, the later sections mostly 
consider cases where $X$ is a suitable subset of $\RR^2$ endowed with the usual 
Lebesgue measure, and $\mathcal H$ is some subspace of $\mathbf{L}^2(\RR)$. 

The most important examples for $(Y,\|\cdot\|_Y)$ are weighted, mixed-norm spaces 
from the family $\bd 
L^{p,q}_w(X)$, for $1\leq p,q\leq \infty$, $X\subseteq \RR^{2d}$ and a continuous, nonnegative {weight 
function} $w:X\mapsto\RR$. These spaces consist of all Lebesgue measurable 
functions, such that the norm 
\begin{equation}
  \|G\|_{\bd L^{p,q}_w} := \left(\int_{\RR^d}\left(\int_{\RR^d} w(x,\xi)^p|G(x,\xi)|^p~ 
dx\right)^{q/p} ~d\xi\right)^{1/q} < \infty.
\end{equation}
Here, $G$ is identified with its zero-extension to a function on 
$\RR^{2d}$. If $p=\infty$ or $q=\infty$, the respective $p$-norm is replaced by the 
essential supremum as usual.

In the next subsections, we recall the central results of generalized coorbit 
theory and their requirements. The interested reader can find a more detailed 
discussion and the necessary proofs in~\cite{fora05}, where these results were 
first presented.

\subsection{The construction of generalized coorbit spaces}\label{ssec:coorbit}

   For the sake of brevity, we will assume from now on that $\Psi := 
\{\psi_x\}_{x\in X}\subset \mathcal H$ is a tight frame, i.e. $\bd S_\Psi f = 
Af$ for all $f\in \mathcal H$, leading to considerable simplifications in the 
following statements. Define the following transform associated to $\Psi$,
   \begin{equation} 
     V_\Psi~:~ \mathcal H \rightarrow L^2(X,\mu),\quad \text{defined by}\quad V_\Psi f(x):= \langle f, 
\psi_x \rangle.
   \end{equation}
   The adjoint operator is given in the weak sense by
   \begin{equation} 
     V_\Psi^\ast~:~ L^2(X,\mu) \rightarrow \mathcal H,\quad V_\Psi^\ast G := 
\int_X G(y)\psi_y d\mu(y).
   \end{equation}
   
   Furthermore, let $\mathcal A_1$ be the Banach algebra of all kernels 
$K:X\times X\rightarrow \CC$, such that the norm 
   \begin{equation}\label{eq:normA1}
      \|K\|_{\AAi} := \max\left\{\esssup_{x\in X} \int_X |K(x,y)|~d\mu(y),\ 
\esssup_{y\in X} \int_X |K(x,y)|~d\mu(x)\right\}
   \end{equation}
   is finite. Note that the two suprema are equal if $K$ is (Hermitian) 
symmetric. The algebra multiplication be given by
   \begin{equation}
      (K_1\cdot K_2)(x,y) = \int_X K_1(x,z)K_2(z,y)~dz.
   \end{equation}
   
   A weight function $m:X\times X\rightarrow \CC$, is called \emph{admissible} if
it satisfies
   \begin{equation}
     1\leq m(x,y) \leq m(x,z)m(z,y),\quad m(x,y) = m(y,x)\ \text{ and }\ m(x,x) 
\leq C,    
   \end{equation}
   for some $C>0$ and all $x,y,z\in X$. The weighted kernel algebra $\mathcal 
A_m$ is the space of all kernels $K:X\times X\rightarrow \CC$, such that 
   \begin{equation}
     \|K\|_{\AAm} := \|Km\|_{\AAi} < \infty.
   \end{equation}
   
   The following theorem combines several important results from~\cite{fora05}.

   \begin{theorem}\label{thm:resanddual}
     Let $m$ be an admissible weight function, fix $z\in X$ and define $v:= 
m(\cdot,z)$. If $\Psi\subset Y$ is a continuous tight frame and the kernel 
$K_\Psi:X\times X\rightarrow \CC$, given by 
     \begin{equation}\label{eq:framekern}
       K_\Psi(x,y) := A^{-1}\langle \psi_y,\psi_x \rangle \text{ for all } x,y\in X,
     \end{equation}
     is contained in $\AAm$, then
     \begin{equation}\label{eq:resspace}
       \mathcal H^1_v := \{f\in\mathcal H~:~ V_\Psi f\in \bd L^1_v\}, \text{ 
with the norm } \|f\|_{\mathcal H^1_v}:= \|V_\Psi f\|_{\bd L^1_v},
     \end{equation}
     is the \emph{minimal Banach space} containing all the frame elements 
$\psi_x$ and satisfying $\|f\|_B \leq Cv(x)$ for some $C>0$. Furthermore, 
$\mathcal H^1_v$ is independent of the particular choice of $z\in X$ and the 
expression $\|V_\Psi f\|_{\bd L^\infty_{1/v}}$ defines an equivalent norm on the 
anti-dual $(\mathcal H^1_v)^\ast$ of $\mathcal H^1_v$.
   \end{theorem}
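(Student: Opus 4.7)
The plan is to verify each assertion in the theorem by combining the tight frame reproducing formula with the algebraic properties of the kernel $K_\Psi \in \AAm$. Throughout, I will exploit the identity $V_\Psi\psi_x(y) = \langle \psi_x,\psi_y\rangle = A\,\overline{K_\Psi(x,y)}$ together with the tight-frame reconstruction $f = A^{-1}\int_X V_\Psi f(y)\,\psi_y\,d\mu(y)$ (valid weakly in $\mathcal H$).

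\textbf{Step 1: $\mathcal H^1_v$ is a Banach space containing the $\psi_x$.} Injectivity of $V_\Psi$ (from the tight frame property) together with completeness of $\bd L^1_v$ transfers completeness to $\mathcal H^1_v$; a Cauchy sequence $(f_n)$ in $\mathcal H^1_v$ yields a Cauchy sequence $(V_\Psi f_n)$ in $\bd L^1_v$ whose limit $G$ must be of the form $V_\Psi f$, as can be seen by applying $A^{-1}V_\Psi^\ast$ and invoking the kernel bound to show the resulting $f\in\mathcal H$ has $V_\Psi f = G$. To estimate $\|\psi_x\|_{\mathcal H^1_v}$, I compute
\begin{equation*}
\|\psi_x\|_{\mathcal H^1_v} = \int_X |V_\Psi\psi_x(y)|\,v(y)\,d\mu(y) = A\int_X |K_\Psi(y,x)|\,m(y,z)\,d\mu(y),
\end{equation*}
and use the submultiplicativity $m(y,z)\le m(y,x)m(x,z) = m(y,x)v(x)$ to bound this by $A\|K_\Psi\|_{\AAm}\,v(x)$.

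\textbf{Step 2: Minimality.} For any Banach space $B$ containing all $\psi_x$ with $\|\psi_x\|_B \le C v(x)$ and satisfying the natural continuity hypothesis allowing the reconstruction integral to make sense in $B$, I would apply the tight-frame reconstruction to $f\in\mathcal H^1_v$ and estimate
\begin{equation*}
\|f\|_B \;\le\; A^{-1}\int_X |V_\Psi f(y)|\,\|\psi_y\|_B\,d\mu(y) \;\le\; A^{-1}C\,\|f\|_{\mathcal H^1_v}.
\end{equation*}
This gives the continuous embedding $\mathcal H^1_v \hookrightarrow B$, proving minimality.

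\textbf{Step 3: Independence of the base point $z$.} For a different $z'\in X$ with $v' = m(\cdot,z')$, the admissibility relations give $v(x)\le m(z',z)v'(x)$ and $v'(x)\le m(z,z')v(x)$. Since $m(z,z') = m(z',z)$ is a finite constant, $v\asymp v'$ pointwise, hence $\bd L^1_v = \bd L^1_{v'}$ with equivalent norms, and the corresponding spaces $\mathcal H^1_v$ coincide with equivalent norms.

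\textbf{Step 4: The antidual characterization.} This is the most delicate step and I expect it to be the main obstacle. I would first define, for $f\in (\mathcal H^1_v)^\ast$, an extended voice transform $\widetilde V_\Psi f(x) := \langle f,\psi_x\rangle_{(\mathcal H^1_v)^\ast,\mathcal H^1_v}$; using the estimate $\|\psi_x\|_{\mathcal H^1_v}\le A\|K_\Psi\|_{\AAm}v(x)$ from Step 1, this immediately yields $|\widetilde V_\Psi f(x)|\le \|f\|_{(\mathcal H^1_v)^\ast}A\|K_\Psi\|_{\AAm}v(x)$, so $\widetilde V_\Psi f\in\bd L^\infty_{1/v}$ with the expected norm bound. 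For the converse, given a measurable $F$ with $|F(x)|\le C v(x)$, I would define a conjugate-linear functional on $\mathcal H^1_v$ via the reproducing formula
\begin{equation*}
\ell_F(g) := A^{-1}\int_X F(y)\,\overline{V_\Psi g(y)}\,d\mu(y),
\end{equation*}
whose boundedness follows from $|\ell_F(g)|\le A^{-1}C\|V_\Psi g\|_{\bd L^1_v} = A^{-1}C\|g\|_{\mathcal H^1_v}$. The remaining work is to check that $\widetilde V_\Psi \ell_F = F$ a.e., which amounts to showing that the reproducing kernel $K_\Psi$ acts as an identity on functions of the form $V_\Psi g$ and, by duality and the kernel bound $K_\Psi\in\AAm$, also on $\bd L^\infty_{1/v}$; this is where the kernel algebra hypothesis plays its decisive role. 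Combining both directions yields the norm equivalence $\|f\|_{(\mathcal H^1_v)^\ast}\asymp \|V_\Psi f\|_{\bd L^\infty_{1/v}}$.
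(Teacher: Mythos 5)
The paper does not prove this theorem at all: it is stated verbatim as a combination of results imported from Fornasier and Rauhut \cite{fora05} (the text preceding it says so explicitly), so there is no in-paper argument to compare against. Your outline follows the standard Fornasier--Rauhut strategy (voice transform, kernel algebra bound $\|\psi_x\|_{\mathcal H^1_v}\leq A\|K_\Psi\|_{\AAm}v(x)$, reconstruction formula for minimality, symmetry of $m$ for base-point independence), and Steps 1--3 are essentially right, modulo the usual care needed in Step 1 to justify that the $\bd L^1_v$-limit $G$ of $V_\Psi f_n$ lies in the range of $V_\Psi$ (this uses the reproducing identity $K_\Psi(V_\Psi f)=V_\Psi f$ and boundedness of $K_\Psi$ on $\bd L^1_v$, which you gesture at but do not spell out).

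The one genuine misstep is in Step 4. You claim that the argument reduces to showing $\widetilde V_\Psi\ell_F=F$ a.e.\ for every $F\in\bd L^\infty_{1/v}$, via the assertion that $K_\Psi$ ``acts as an identity \ldots also on $\bd L^\infty_{1/v}$''. That is false in general: a direct computation gives $\widetilde V_\Psi\ell_F(x)=\int_X F(y)K_\Psi(x,y)\,d\mu(y)=K_\Psi(F)(x)$, and $K_\Psi$ is the identity only on its range, the reproducing-kernel subspace $K_\Psi(\bd L^\infty_{1/v})$ --- if it were the identity on all of $\bd L^\infty_{1/v}$, every such $F$ would be a voice transform, which contradicts the range characterization $G=V_\Psi f\Leftrightarrow G=K_\Psi(G)$ used elsewhere in the paper. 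The correct route for the lower bound is the other direction of the correspondence: show that every $f\in(\mathcal H^1_v)^\ast$ satisfies the extended reconstruction $f=\ell_{\widetilde V_\Psi f}$ (equivalently $K_\Psi(\widetilde V_\Psi f)=\widetilde V_\Psi f$, proved by weak-$\ast$ density of $\mathcal H$ in the antidual), and then estimate $\|f\|_{(\mathcal H^1_v)^\ast}=\|\ell_{\widetilde V_\Psi f}\|\leq A^{-1}\|\widetilde V_\Psi f\|_{\bd L^\infty_{1/v}}$. Surjectivity of $F\mapsto\ell_F$ with norm control is what you need; injectivity on all of $\bd L^\infty_{1/v}$ is neither true nor required.
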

   
   The result above enables the extension of $V_\Psi$ to the distribution space 
$(\mathcal H^1_v)^\ast$ by means of
   \begin{equation}
     V_\Psi f(x):= \langle f, \psi_x \rangle = f(\psi_x), \text{ for all } x\in 
X, f\in (\mathcal H^1_v)^\ast.
   \end{equation}
   $\mathcal H^1_v$ possesses a number of additional nice properties. For an 
exhaustive list, please refer to~\cite{fora05}. We only wish to note that 
$\mathcal H^1_v$ is dense and continuously embedded in $\mathcal H$, whereas 
$\mathcal H$ is weak-$\ast$ dense in $(\mathcal H^1_v)^\ast$, giving rise to a 
Banach-Gelfand triple~\cite{gevi64,an98-2,cofelu08}.
   
   If a solid Banach space $Y$ satisfies 
   \begin{equation}\label{eq:BScond}
     \AAm(Y)\subset Y \text{ and }\|K(F)\|_Y\leq \|K\|_{\AAm}\|F\|_Y, \text{ for 
all } K\in{\AAm},\ F\in Y,
   \end{equation}
 then we can define the coorbits of $Y$ with respect to the frame $\Psi$, 
provided $K_\Psi \in\AAm$.
   \begin{equation}\label{eq:coorbits}
     \text{Co}Y := \text{Co}(\Psi,Y) := \{f\in (\mathcal H^1_v)^\ast~:~ V_\Psi 
f\in Y\},
   \end{equation}
   with natural norms $\|f\|_{\text{Co}Y} := \|V_\Psi f\|_Y$.
    
   \begin{theorem}\label{thm:coorbits}
     Let $Y$ be a solid Banach space that satisfies Eq. \eqref{eq:BScond} and 
$K_\Psi(Y)\subset \bd L^\infty_{1/v}$, for some admissible weight $m$. If $\Psi$ 
is a continuous tight frame with $K_\Psi\in\AAm$, then 
$(\text{Co}Y,\|\cdot\|_{\text{Co}Y})$ is a Banach space.     
     For $G\in Y$, $G=K_\Psi(G) \Leftrightarrow G = V_\Psi f$ for some 
$f\in\text{Co}Y$. Furthermore,  the map $V:\text{Co}Y \rightarrow Y$ is an 
isometry on the closed subspace $K_\Psi(Y)$ of $Y$.     
   \end{theorem}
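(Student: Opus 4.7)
The plan is to prove the three assertions simultaneously: completeness of $\text{Co}Y$ will fall out as a corollary of identifying $V_\Psi(\text{Co}Y)$ with a closed subspace of $Y$, and the isometric property of $V_\Psi$ holds by the very definition $\|f\|_{\text{Co}Y} := \|V_\Psi f\|_Y$ in \eqref{eq:coorbits} (injectivity of $V_\Psi$ on $(\mathcal H^1_v)^\ast$ is furnished by the reproducing formula below). Thus the primary task is to characterize the image of $V_\Psi$ restricted to $\text{Co}Y$ as exactly $K_\Psi(Y)$.

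The central tool is the reproducing formula $V_\Psi f = K_\Psi(V_\Psi f)$, which holds first for $f \in \mathcal H$ because tightness yields $V_\Psi^\ast V_\Psi = A \operatorname{Id}$, and then extends to all $f \in (\mathcal H^1_v)^\ast$ by weak-$\ast$ density as in Theorem \ref{thm:resanddual}. A short calculation exploiting tightness, namely
\[
(K_\Psi \cdot K_\Psi)(x,y) = A^{-2} \int_X \langle \psi_z, \psi_x \rangle \langle \psi_y, \psi_z \rangle \, d\mu(z) = A^{-2} \langle V_\Psi^\ast V_\Psi \psi_y, \psi_x \rangle = K_\Psi(x,y),
\]
shows that $K_\Psi$ is idempotent in $\AAm$; Fubini is justified because $K_\Psi \in \AAm \subset \AAi$. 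Combined with \eqref{eq:BScond}, $K_\Psi$ therefore acts as a bounded projection on $Y$, so its range $K_\Psi(Y) = \{G \in Y : K_\Psi(G) = G\}$ coincides with the kernel of the bounded operator $\operatorname{Id} - K_\Psi$ and is hence closed in $Y$.

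For the identification $V_\Psi(\text{Co}Y) = K_\Psi(Y)$, the inclusion ``$\subseteq$'' is immediate from applying the reproducing formula to $f \in \text{Co}Y$. For the converse, given $G = K_\Psi(G) \in K_\Psi(Y)$, I would define the anti-linear functional $f$ on $\mathcal H^1_v$ by
\[
f(\phi) := A^{-1} \int_X G(y) \, \overline{V_\Psi \phi(y)} \, d\mu(y).
\]
The hypothesis $K_\Psi(Y) \subset \bd L^\infty_{1/v}$ combined with the closed graph theorem yields $\|G\|_{\bd L^\infty_{1/v}} \lesssim \|G\|_Y$, so the pairing above converges absolutely with $|f(\phi)| \lesssim \|G\|_Y \cdot \|V_\Psi \phi\|_{\bd L^1_v} = \|G\|_Y \, \|\phi\|_{\mathcal H^1_v}$ thanks to Theorem \ref{thm:resanddual}, so that $f \in (\mathcal H^1_v)^\ast$. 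A direct computation using $\overline{V_\Psi \psi_x(y)} = \overline{\langle \psi_x, \psi_y \rangle} = A\, K_\Psi(x,y)$ then gives $V_\Psi f(x) = f(\psi_x) = K_\Psi(G)(x) = G(x)$, so $f \in \text{Co}Y$ with $V_\Psi f = G$.

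The main obstacle is precisely this backward inclusion: verifying that the candidate functional $f$ is genuinely continuous on $\mathcal H^1_v$ and that the formal identity $V_\Psi f = K_\Psi(G) = G$ can be read off cleanly. The interplay between the tight-frame normalization and the $\bd L^\infty_{1/v}$--$\bd L^1_v$ duality pairing makes this work, but one must keep the factors of $A$ and the complex conjugates straight. Once the image of $V_\Psi$ is identified with the closed subspace $K_\Psi(Y)$ of $Y$, completeness of $\text{Co}Y$ follows by pullback along the isometry $V_\Psi$, and the final statement about $V_\Psi$ being isometric on $K_\Psi(Y)$ is then a mere reformulation.
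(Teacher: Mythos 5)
The paper gives no proof of Theorem \ref{thm:coorbits}: it is quoted from Fornasier and Rauhut \cite{fora05}, to which the reader is explicitly referred for the argument. Your proposal is a correct reconstruction of that standard proof — the reproducing formula from tightness, idempotency of $K_\Psi$ in $\AAi$ making $K_\Psi(Y)$ the closed range of a bounded projection, and the explicit $\bd L^\infty_{1/v}$--$\bd L^1_v$ pairing that manufactures $f\in(\mathcal H^1_v)^\ast$ with $V_\Psi f=G$ from $G=K_\Psi(G)$ — so it matches the approach the paper relies on.
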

   
    In particular, 
    \begin{equation}
      \text{Co}\bd L^1_v = \mathcal{H}^1_v,\quad \text{Co}\bd L^\infty_{1/v} = 
(\mathcal{H}^1_v)^\ast\ \text{ and }\ \text{Co}\bd L^2 = \bd L^2.
    \end{equation} 
    
    The coorbit spaces $(\text{Co}Y,\|\cdot\|_{\text{Co}Y})$ are independent of 
the particular choice of the continuous frame $\Psi$, under a certain 
equivalence condition on the mixed kernel associated to a pair of continuous 
frames. 
    
    \begin{proposition}\label{pro:mixedkern}
      If $Y$ and $\Psi$ satisfy the conditions of Theorem \ref{thm:coorbits} and 
$\widetilde{\Psi}$ is a continuous frame with 
$K_{\widetilde{\Psi}},K_{\Psi,\widetilde{\Psi}}\in\AAm$, where 
$K_{\Psi,\widetilde{\Psi}}$ is the mixed kernel defined by
      \begin{equation}
        K_{\Psi,\widetilde{\Psi}}(x,y) := \left\langle 
\widetilde{\psi_y},\psi_x\right\rangle  
      \end{equation}      
      then 
      \begin{equation}
        \text{Co}(\Psi,Y) = \text{Co}(\widetilde{\Psi},Y).
      \end{equation}
    \end{proposition}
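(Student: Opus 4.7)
The plan is to transfer the Banach space property \eqref{eq:BScond} between $V_\Psi$ and $V_{\widetilde{\Psi}}$ through integral operators whose kernels lie in $\AAm$. Before doing so, one should check that the two candidate reservoirs $(\mathcal H^1_v)^*$ attached to $\Psi$ and $\widetilde{\Psi}$ actually coincide, so that the claimed set equality is well posed. This is a direct consequence of the minimal Banach space property in Theorem \ref{thm:resanddual}: since $V_\Psi \widetilde{\psi}_x(y) = \overline{K_{\Psi,\widetilde{\Psi}}(y,x)}$ and the $\AAi$-norm is invariant under transposition (both suprema appear in \eqref{eq:normA1} and $m(x,y)=m(y,x)$), the admissibility bound $v(y) \leq m(x,y)v(x)$ yields $\|\widetilde{\psi}_x\|_{\mathcal H^1_{v,\Psi}} \leq C v(x)$. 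Minimality then gives $\mathcal H^1_{v,\widetilde{\Psi}} \hookrightarrow \mathcal H^1_{v,\Psi}$, and the same estimate with the roles of $\Psi$ and $\widetilde{\Psi}$ interchanged produces the reverse embedding; the two distribution reservoirs therefore agree.

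For the inclusion $\text{Co}(\Psi,Y) \subset \text{Co}(\widetilde{\Psi},Y)$, substitute the tight-frame reproducing formula $f = A^{-1}\int_X V_\Psi f(y) \psi_y\, d\mu(y)$ into $V_{\widetilde{\Psi}} f(x) = \langle f, \widetilde{\psi}_x\rangle$ to obtain
\begin{equation*}
V_{\widetilde{\Psi}} f(x) = \int_X N_1(x,y)\, V_\Psi f(y)\, d\mu(y), \qquad N_1(x,y) := A^{-1}\overline{K_{\Psi,\widetilde{\Psi}}(y,x)}.
\end{equation*}
Since $|N_1(x,y)| = A^{-1}|K_{\Psi,\widetilde{\Psi}}(y,x)|$, the transposition invariance of $\|\cdot\|_{\AAm}$ noted above gives $\|N_1\|_{\AAm} = A^{-1}\|K_{\Psi,\widetilde{\Psi}}\|_{\AAm} < \infty$. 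Invoking \eqref{eq:BScond} yields $V_{\widetilde{\Psi}} f \in Y$, i.e., $f \in \text{Co}(\widetilde{\Psi},Y)$.

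For the reverse inclusion $\text{Co}(\widetilde{\Psi},Y) \subset \text{Co}(\Psi,Y)$, one needs to express $V_\Psi f$ as an $\AAm$-kernel image of $V_{\widetilde{\Psi}} f$. Expanding $f$ through the canonical dual frame $\widetilde{\psi}^d_y := \bd S_{\widetilde{\Psi}}^{-1}\widetilde{\psi}_y$ produces
\begin{equation*}
V_\Psi f(x) = \int_X N_2(x,y)\, V_{\widetilde{\Psi}} f(y)\, d\mu(y), \qquad N_2(x,y) := \langle \widetilde{\psi}^d_y, \psi_x\rangle.
\end{equation*}
The central obstacle is to show $N_2 \in \AAm$, and this is where the second hypothesis $K_{\widetilde{\Psi}} \in \AAm$ becomes essential: since $K_{\widetilde{\Psi}}(x,y) = \langle \bd S_{\widetilde{\Psi}}^{-1}\widetilde{\psi}_y, \widetilde{\psi}_x\rangle$ already encodes the inverse frame operator of $\widetilde{\Psi}$, $N_2$ can be realized as a product in the Banach algebra $\AAm$ of $K_{\Psi,\widetilde{\Psi}}$ with iterates involving $K_{\widetilde{\Psi}}$. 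Concretely, a rescaled Neumann expansion $\bd S_{\widetilde{\Psi}}^{-1} = c\sum_{n\geq 0}(I - c\bd S_{\widetilde{\Psi}})^n$ (with $c>0$ chosen to ensure contractivity) writes $N_2$ as a convergent series in $\AAm$ built from the two assumed kernels. Once $N_2\in \AAm$ is established, \eqref{eq:BScond} gives $V_\Psi f \in Y$, finishing the set equality, and the equivalence of norms $\|\cdot\|_{\text{Co}(\Psi,Y)} \sim \|\cdot\|_{\text{Co}(\widetilde{\Psi},Y)}$ follows from the two bounded inclusions and the open mapping theorem.
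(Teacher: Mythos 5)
The paper never proves Proposition \ref{pro:mixedkern}: it is quoted from Fornasier--Rauhut \cite{fora05} (the paper even leaves $K_{\widetilde{\Psi}}$ undefined for a non-tight frame), so your argument has to stand on its own. Its architecture is sound: first check that the two reservoirs $(\mathcal H^1_v)^\ast$ coincide via the minimality clause of Theorem \ref{thm:resanddual} --- your estimate $\|\widetilde{\psi_x}\|_{\mathcal H^1_v}\leq Cv(x)$ from the $\AAm$-bound on $K_{\Psi,\widetilde{\Psi}}$, the symmetry of $m$ and the transposition invariance of $\|\cdot\|_{\AAi}$ is correct (up to an immaterial conjugate: $V_\Psi\widetilde{\psi_x}(y)=K_{\Psi,\widetilde{\Psi}}(y,x)$, not its conjugate) --- and then realize each transform as an $\AAm$-kernel applied to the other and invoke \eqref{eq:BScond}. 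The inclusion $\text{Co}(\Psi,Y)\subset\text{Co}(\widetilde{\Psi},Y)$ is fine as written, because $\Psi$ is tight and $N_1$ is just $A^{-1}$ times the transpose-conjugate of the assumed mixed kernel.

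The gap is in the reverse inclusion. You correctly isolate the obstacle --- showing $N_2(x,y)=\langle \bd S_{\widetilde{\Psi}}^{-1}\widetilde{\psi_y},\psi_x\rangle\in\AAm$ --- but the Neumann-series resolution does not work. Choosing $c$ so that $\|\operatorname{Id}-c\bd S_{\widetilde{\Psi}}\|_{\mathcal H\rightarrow\mathcal H}<1$ gives convergence of $c\sum_{n\geq 0}(\operatorname{Id}-c\bd S_{\widetilde{\Psi}})^n$ in $\mathcal B(\mathcal H)$, but your series of kernels must converge in $\AAm$, and nothing in the hypotheses forces that. The iterates of $\operatorname{Id}-c\bd S_{\widetilde{\Psi}}$ are built from the plain Gram kernel $\langle\widetilde{\psi_y},\widetilde{\psi_x}\rangle$, which is not among the assumed kernels (the kernel $K_{\widetilde{\Psi}}$, as you interpret it, already contains $\bd S_{\widetilde{\Psi}}^{-1}$ and cannot regenerate the Gram kernel by algebra operations); and even if the Gram kernel were assumed to lie in $\AAm$, the triangle inequality only yields bounds of order $(1+c\|\cdot\|_{\AAm})^n$ for the $n$-th term, with no geometric decay. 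This is exactly the spectral-invariance problem: $\AAm$ is in general not inverse-closed in $\mathcal B(\bd L^2)$, which is why \cite{fora05} puts the kernels involving $\bd S^{-1}$ directly into the hypotheses instead of deriving them. Two ways to close the gap: either strengthen the hypothesis to $\langle\bd S_{\widetilde{\Psi}}^{-1}\widetilde{\psi_y},\psi_x\rangle\in\AAm$ (the form actually used in \cite{fora05}), or note that in every application in this paper (Proposition \ref{pro:protoindep}) $\widetilde{\Psi}$ is itself tight, so that $N_2=A_{\widetilde{\Psi}}^{-1}K_{\Psi,\widetilde{\Psi}}\in\AAm$ trivially and the reverse inclusion is immediate. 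Finally, the appeal to the open mapping theorem is unnecessary: the two kernel estimates already give the norm equivalence with explicit constants.
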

    
\subsection{Discretization in generalized coorbit spaces}\label{ssec:discret}

    In the next steps, we investigate the discretization properties of the 
continuous frame $\Psi$, obtaining sufficient conditions for atomic 
decompositions and Banach frames in terms of a discrete subset of $\Psi$.

We only provide a review of the theory provided in \cite{fora05}, shortened to an absolute
minimum. For a comprehensive treatment including the necessary details, please refer to the original contribution.

    \begin{definition}\label{def:modadmcover}
      A family $\mathcal U = \{U_i\}_{i\in I}$ for some countable index set $I$ 
is called \emph{admissible covering} of $X$, if the following hold. Every $U_i$ 
is relatively compact with non-void interior, $X = \cup_{i\in I}$ and 
$\sup_{i\in I} \#\{j\in I~:~ U_i\cap U_j\neq\emptyset\} \leq N < \infty$ for 
some $N>0$. An admissible covering is \emph{moderate}, if $0< D\leq \mu(U_i)$ 
for all $i\in I$ and there is a constant $\tilde{C}$ with 
      \begin{equation}
        \mu(U_i)\leq \tilde{C}\mu(U_j), \text{ for all } i,j\in I \text{ such 
that } U_i\cap U_j\neq\emptyset.
      \end{equation}
    \end{definition}
    
%
    The main discretization result states that any pair of a continuous tight 
frame $\Psi$ and a covering $\mathcal U^\delta$, such that 
$\|\text{osc}_{\mathcal U^\delta}\|_{\AAm} < \delta$ for a sufficiently small 
$\delta > 0$, gives rise to atomic decompositions and Banach frames for $ 
\text{Co}(\Psi,Y)$ in a natural way, i.e. $\{\psi_{x_i}\}_{i\in I}$ is both a 
Banach frame and an atomic decomposition if $x_i\in U_i$ for all $i\in I$.
    
    \begin{definition}\label{def:atomdecbanfram}
      A family $\Psi:= \{\psi_i\}_{i\in I}$ in a Banach space $(B,\|\cdot\|_B)$ 
is called an \emph{atomic decomposition} for $B$, if there is a 
BK-space\footnote{A solid Banach space of sequences where convergence implies 
componentwise convergence.} $(B^\sharp,\|\cdot\|_B^\sharp)$ and linear, bounded 
functionals $\{\lambda_i\}_{i\in I} \subseteq B^\ast$ such that 
      \begin{itemize}
       \item $(\lambda_i(f))_{i\in I}\in B^\sharp$ for all $f\in B$ and there is 
a finite constant $C_1>0$ such that 
	  \begin{equation}
	    \|(\lambda_i(f))_{i\in I}\|_{B^\sharp} \leq C_1\|f\|_B,
	  \end{equation}
       \item if $(\lambda_i)_{i\in I}\in B^\sharp$, then $f := \sum_{i\in I} 
\lambda_i g_i \in B$ (with unconditional convergence in some suitable topology) 
and there is a finite constant $C_2 > 0$ such that
	  \begin{equation}
	    \|f\|_B \leq C_2\|(\lambda_i)_{i\in I}\|_{B^\sharp},
	  \end{equation}
       \item $f = \sum_{i\in I} \lambda_i(f) g_i$, for all $f\in B$.
      \end{itemize}
      A family $\widetilde{\Psi}:= \{\widetilde{\psi_i}\}_{i\in I}$ in 
$B^\ast$ is \emph{Banach frame} for $B$, if there is a BK-space 
$(B^\flat,\|\cdot\|_B^\flat)$ and linear, bounded operator $\Omega:B^\flat 
\rightarrow B$ such that 
      \begin{itemize}
       \item if $f\in B$, then $(\widetilde{\psi_i}(f))_{i\in I}\in B^\flat$ and 
there are finite constants $0< C_1 \leq C_2$ such that 
	  \begin{equation}
	    C_1\|f\|_B \leq \|(\widetilde{\psi_i}(f))_{i\in I}\|_{B^\flat} \leq 
C_2\|f\|_B,
	  \end{equation}
       \item $f = \Omega\left((\widetilde{\psi_i}(f))_{i\in I}\right)$, for all 
$f\in B$.
      \end{itemize}
    \end{definition}
    
    \begin{definition}\label{def:osckern}
    The oscillation of a continuous tight frame $\Psi$ with respect to 
the moderate, admissible covering $\mathcal U$ of $X$ is defined by
    \begin{equation}
      \text{osc}_{\mathcal U}(x,y):= \text{osc}_{\Psi,\mathcal U}(x,y):= 
A^{-1}\sup_{z\in Q_y} |\langle \psi_x,\psi_y-\psi_z \rangle| = A^{-1}\sup_{z\in Q_y} 
|K_\Psi(x,y)-K_\Psi(x,z)|,
    \end{equation}
    where $Q_y:= Q_{\mathcal U,y}:=\cup_{i\in I,y\in U_i} U_i$.
    \end{definition}
    
    \begin{theorem}\label{thm:discret1}
      Let $Y$ and $\Psi$ satisfy the conditions of Theorem \ref{thm:coorbits}. 
If the moderate, admissible covering $\mathcal U^\delta$ is such that 
      \begin{equation}
        \|\text{osc}_{\mathcal 
U^\delta}\|_{\AAm}\left(\|K_\Psi\|_{\AAm}+\max\{C_{m,\mathcal U^\delta} 
\|K_\Psi\|_{\AAm},\|K_\Psi\|_{\AAm}+\|\text{osc}_{\mathcal 
U^\delta}\|_{\AAm}\}\right)< 1,
      \end{equation}
      for some $C_{m,\mathcal U^\delta} \geq \sup_{i\in I}\sup_{x,y\in 
U_i^\delta} m(x,y)$, then $\{\psi_{x_i}\}_{i\in I}$ is a Banach frame and an 
atomic decomposition for $ \text{Co}(\Psi,Y)$ if $x_i\in U_i$ for all $i\in I$.
    \end{theorem}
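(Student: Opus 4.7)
The plan is to follow the classical discretization scheme for generalized coorbit theory (essentially reproducing the argument from \cite{fora05}) by constructing two approximation operators — one for the atomic decomposition, one for the Banach frame — and controlling their distance to the identity on $K_\Psi(Y)$ via the oscillation kernel.

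First, I would fix a bounded uniform partition of unity $\{\phi_i\}_{i\in I}$ subordinate to the admissible covering $\mathcal{U}^\delta$, i.e. $0\le\phi_i\le 1$, $\text{supp}\,\phi_i\subseteq U_i^\delta$ and $\sum_i\phi_i\equiv 1$. The key structural input is the reproducing identity $V_\Psi f=K_\Psi(V_\Psi f)$, valid on all of $\text{Co}Y$ by Theorem \ref{thm:coorbits}. Writing $V_\Psi f(x)=\sum_i\int\phi_i(y)K_\Psi(x,y)V_\Psi f(y)\,d\mu(y)$, one obtains two natural Riemann-sum approximations: (a) freeze the kernel at $x_i$, giving $\Phi F(x):=\sum_i\bigl(\int\phi_i\,F\,d\mu\bigr)K_\Psi(x,x_i)$; (b) freeze the function at $x_i$, giving $\Psi_dF(x):=\sum_iF(x_i)\int\phi_i(y)K_\Psi(x,y)\,d\mu(y)$. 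Both operators map $Y$ into $Y$ because, by the BUPU property and Definition \ref{def:modadmcover}, their kernels belong to $\AAm$ with norm essentially bounded by $\|K_\Psi\|_{\AAm}$.

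Next, I would estimate $\|K_\Psi-\Phi\|_{\AAm}$ and $\|K_\Psi-\Psi_d\|_{\AAm}$. Since $\sum_i\phi_i(y)=1$ and $x_i\in U_i\subseteq Q_y$ whenever $y\in U_i$, the error kernel for $\Phi$ is pointwise dominated by $\sum_i\phi_i(y)|K_\Psi(x,y)-K_\Psi(x,x_i)|\le\text{osc}_{\mathcal U^\delta}(x,y)$ (up to the normalization in Definition \ref{def:osckern}); an analogous estimate, with the roles of $x$ and $x_i$ swapped and a factor from moderateness of $m$ on each $U_i^\delta$ (hence the appearance of $C_{m,\mathcal{U}^\delta}$), gives $\|K_\Psi-\Psi_d\|_{\AAm}\lesssim C_{m,\mathcal U^\delta}\|\text{osc}_{\mathcal U^\delta}\|_{\AAm}$. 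Combining these with $\|K_\Psi\|_{\AAm}$ in the operator composition that one actually needs to invert explains the three-term hypothesis of the theorem.

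With the oscillation condition in force, a Neumann series inverts the relevant approximation operator on $K_\Psi(Y)$ (the reproducing subspace of $Y$, which by Theorem \ref{thm:coorbits} is isometric to $\text{Co}Y$ under $V_\Psi$). For the atomic decomposition, applying the inverse of $\Phi$ to $V_\Psi f$ yields coefficients $\lambda_i(f):=c_i(\Phi^{-1}V_\Psi f)$, whereupon
\[
V_\Psi f=\sum_i\lambda_i(f)K_\Psi(\cdot,x_i)=\sum_i\lambda_i(f)V_\Psi\psi_{x_i},
\]
and $V_\Psi^*$ together with the tight-frame inversion give $f=\sum_i\lambda_i(f)\psi_{x_i}$; boundedness of $\lambda_i$ from $\text{Co}Y$ to the sequence space $Y^\sharp:=\{(c_i):\sum_ic_i\phi_i\in Y\}$ follows from the BUPU estimate. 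For the Banach frame, the inversion of $\Psi_d$ produces a bounded reconstruction operator $\Omega$ acting on the sequence $(V_\Psi f(x_i))_{i\in I}=(\langle f,\psi_{x_i}\rangle)_{i\in I}$, which lives in the companion sequence space $Y^\flat:=\{(c_i):\sum_ic_i\mathbf{1}_{U_i}\in Y\}$ with norm equivalent to $\|V_\Psi f\|_Y=\|f\|_{\text{Co}Y}$.

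The main obstacle I anticipate is the careful bookkeeping in step three: showing that the $\AAm$-norm bounds on the error kernels translate cleanly into operator-norm bounds on $K_\Psi(Y)$ (rather than on all of $Y$), that the sequence spaces $Y^\sharp,Y^\flat$ satisfy the BK-space axioms of Definition \ref{def:atomdecbanfram}, and that the moderateness constant $C_{m,\mathcal U^\delta}$ enters in exactly the asymmetric way required to produce the $\max\{\cdot,\cdot\}$ term. Everything else — unconditional convergence of the reconstruction sum, independence of the specific choice $x_i\in U_i$, and the two-sided frame inequality — is a direct consequence of the Neumann-series inversion once the kernel estimates are established.
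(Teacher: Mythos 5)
Your proposal follows essentially the same route as the paper's: the paper imports this theorem from Fornasier--Rauhut (Theorems 5.7/5.8 of \cite{fora05}) and only writes out the crucial step --- invertibility of the discretization operator --- for the $\Gamma$-generalized version in Section \ref{sec:genosc}, and that argument is exactly your scheme (a partition of unity subordinate to $\mathcal U^\delta$, Riemann-sum approximations of the reproducing identity $V_\Psi f=K_\Psi(V_\Psi f)$, oscillation-kernel error bounds in $\AAm$, Neumann series on $K_\Psi(Y)$, then coefficients for the atomic decomposition and a reconstruction operator for the Banach frame). One substantive divergence: for the Banach-frame half you invert $\Psi_d F=\sum_i F(x_i)\int\phi_i(y)K_\Psi(\cdot,y)\,d\mu(y)$, which in the paper appears only as the \emph{auxiliary} operator $\bd S_\Psi$; the operator actually inverted is $\bd U_\Psi F=\sum_i c_i F(x_i)K_\Psi(\cdot,x_i)$, with the kernel frozen at $x_i$ as well, and the estimate proceeds via $\|G-\bd U_\Psi G\|_Y\leq\|G-\bd S_\Psi G\|_Y+\|\bd S_\Psi G-\bd U_\Psi G\|_Y$. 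Your $\Psi_d$ does depend only on the samples, so inverting it would also yield a reconstruction, and its error bound is in fact the simpler $\|K_\Psi\|_{\AAm}\|\text{osc}_{\mathcal U^\delta}\|_{\AAm}$ with no $C_{m,\mathcal U^\delta}$ at all --- your attribution of $C_{m,\mathcal U^\delta}$ to ``swapping the roles of $x$ and $x_i$'' in that estimate is not where it actually enters. The $\max\{C_{m,\mathcal U^\delta}\|K_\Psi\|_{\AAm},\|K_\Psi\|_{\AAm}+\|\text{osc}_{\mathcal U^\delta}\|_{\AAm}\}$ factor comes from bounding the sampling operator $G\mapsto\sum_i|G(x_i)|\phi_i$ from $K_\Psi(Y)$ to $Y$ (Lemma 5.11 of \cite{fora05}), which is needed both for $\|\bd S_\Psi G-\bd U_\Psi G\|_Y$ and for the upper frame inequality placing $(\langle f,\psi_{x_i}\rangle)_i$ in $Y^\flat$; you correctly flag this as the remaining bookkeeping but do not carry it out. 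With that step supplied, your outline matches the cited proof.
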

    
    For details, e.g. about suitable associated sequence spaces, please refer to \cite{fora05}.
    
    The usual strategy for satisfying Theorem \ref{thm:discret1} is to construct 
a family of moderate, admissible coverings $\mathcal U^\delta$, such that 
    \begin{equation}\label{eq:convergentcover}
      \|\text{osc}_{\mathcal U^\delta}\|_{\AAm} \overset{\delta \rightarrow 
0}{\rightarrow} 0\text{ and } C_{m,\mathcal U^\delta} < C < \infty,
    \end{equation}
    for $\delta$ sufficiently small. Consequently, we can find $\delta_0 > 0$, 
such that Theorem \ref{thm:discret1} holds for all $\mathcal U^\delta$ with 
$\delta \leq \delta_0$.

\section{The generalized oscillation kernel}\label{sec:genosc}

We now motivate and present a generalization of the discretization theory for generalized coorbit 
spaces. Since the derivation of our extended results is largely analogous to the content of \cite[Section 5]{fora05},
we only provide the results and indicate the necessary changes here. However,
the complete derivation can be found in \cite{ournote}. There, we provide a variant of 
 \cite[Section 5]{fora05} considering our changes, as well as some corrections and modifications to 
 provide a more rigorous and accessible treatment of the theory provided in \cite{fora05}.

A closer investigation of the oscillation kernel associated to the short-time 
Fourier transform (STFT) shows that the sampling results obtained via classical 
coorbit space theory are not easily recovered using the 
theory presented in~\cite{fora05}. At the very least, no sequence of intuitive, 
\emph{regular} phase space coverings with property \eqref{eq:convergentcover} 
seems to exist, as Example \ref{ex:STFTcover} below demonstrates. We conclude 
that the construction of a moderate, admissible covering with 
$\|\text{osc}_{\mathcal U^\delta}\|_{\AAm} < \delta$ is far from a trivial task, 
if at all possible. In the setting of the $\alpha$-transform, Dahlke et 
al~\cite{daforastte08} circumvent this problem by redefining the oscillation 
kernel to take into account the group action on the \emph{affine Weyl-Heisenberg 
group}. An alternative approach, obtaining semi-regular Banach frames from 
sampled $\alpha$-transforms, is presented in \cite{fo07}, which is in turn based 
on previous work by Feichtinger and Gr\"ochenig~\cite{fegr92}. The following 
examples serve to illustrate why the (unaltered) application of generalized 
coorbit theory to the STFT and $\alpha$-transform presents a nontrivial task. At 
the same time, they 
motivate our own solution to the problem.

\begin{example}[Coverings for the STFT]\label{ex:STFTcover}
  Define the covering $\mathcal U^\delta := \{U_{k,l}^\delta\}_{k,l\in\ZZ}$ 
by 
  \begin{equation}
    U_{0,0}^{\delta}:= (-\delta,\delta)\times (-\delta,\delta),\quad 
U_{k,l}^\delta := \bd T_{(k\delta,l\delta)}U_{0,0}^{\delta}, \text{ for all } 
k,l\in\ZZ.
  \end{equation}
  Selecting the Schr\"{o}dinger representation of the Heisenberg group, the 
continuous tight frame of short-time Fourier type arising from the Schwartz 
class window $g\in\mathcal S(\RR)$ is given by $\mathcal 
G(g):=\{g_{x,\xi}\}_{x,\xi\in\RR}$, where
  \begin{equation}
    g_{x,\xi} := e^{-\pi ix\xi}\bd M_\xi \bd T_x g.
  \end{equation}
  In classical coorbit theory, see e.g. \cite{gr91}, the associated oscillation 
kernel with respect to $\mathcal U^\delta$ is given by
  \begin{equation}
    \begin{split}
    \widetilde{\text{osc}}_{\mathcal U^\delta}(x,y,\xi,\omega) & := 
\sup_{(z,\eta)\in Q_{(y,\omega)}} |V_{G(g)} g(y-x,\omega-\xi) - V_{G(g)} 
g(z-x,\eta-\xi)| \\
    & \leq \sup_{(\epsilon_1,\epsilon_2)\in U^{2\delta}_{0,0}} |V_{G(g)} 
g(y-x,\omega-\xi) - V_{G(g)} g(y+\epsilon_1-x,\omega+\epsilon_2-\xi)| \\
    & = \widetilde{\text{osc}}_{\mathcal U^{2\delta}}(0,y-x,0,\omega-\xi),
    \end{split}
  \end{equation}
  since $Q_{y,\omega} \subseteq U_{y,\omega}^{2\delta} := 
(y-2\delta,y+2\delta)\times (\omega-2\delta,\omega+2\delta)$. Now, 
  \begin{equation}
    V_{G(g)} g(y,\omega) - V_{G(g)} g(y+\epsilon_1,\omega+\epsilon_2) = 
\left\langle g, e^{-\pi i y\omega}\bd M_\omega \bd T_y \left( g - e^{\pi 
i(y\epsilon_2-\omega\epsilon_1)}\bd M_{\epsilon_2} \bd T_{\epsilon_1} g \right) 
\right\rangle.
  \end{equation}
Note that $e^{\pi i(y\epsilon_2-\omega\epsilon_1)} \overset{ 
(\epsilon_1,\epsilon_2)\rightarrow (0,0)}{\rightarrow} 1$ for any fixed 
$(y,\omega)\in\RR^2$ and $\langle g,g_{y,\omega} \rangle$ rapidly converges to 
$0$, for $|(y,\omega)|\rightarrow \infty$. Therefore, a standard $2\epsilon$ 
argument, considering $\widetilde{\text{osc}}_{\mathcal 
U^{2\delta}}(0,y,0,\omega)$ on a compact neighborhood of $(0,0)$ and 
$\sup_{(z,\eta)\in Q_{(y,\omega))}} 2|\langle g,g_{z,\eta} \rangle|$ shows that 
Eq. \eqref{eq:convergentcover} holds.

On the other hand, the oscillation kernel according to Definition 
\ref{def:osckern} and \cite{fora05} yields
\begin{equation}
    \begin{split}
    \text{osc}_{\mathcal U^\delta}(x,y,\xi,\omega) & = \sup_{(z,\eta)\in 
Q_{(y,\omega)}} |e^{\pi i (y\xi-x\omega)}V_{G(g)} g(y-x,\omega-\xi) - e^{\pi i 
(z\xi-x\eta)}V_{G(g)} g(z-x,\eta-\xi)| \\
    & = \sup_{(z,\eta)\in Q_{(y,\omega)}} |V_{G(g)} g(y-x,\omega-\xi) - e^{\pi i 
\left((z-y)\xi-x(\eta-\omega)\right)}V_{G(g)} g(z-x,\eta-\xi)|, 
    \end{split}
  \end{equation}
  and 
  \begin{equation}
  \begin{split}
   \lefteqn{V_{G(g)} g(y-x,\omega-\xi) - e^{\pi i 
\left((z-y)\xi-x(\eta-\omega)\right)}V_{G(g)} 
g(y-x+\epsilon_1,\omega-\xi+\epsilon_2)}\\
   & = \left\langle g, e^{-\pi i (y-x)(\xi-\omega)}\bd M_{\omega-\xi} \bd 
T_{y-x} \left( g - e^{\pi i (y\epsilon_2-\epsilon_1\omega)}\bd M_{\epsilon_2} 
\bd T_{\epsilon_1} g \right) \right\rangle.
  \end{split}
  \end{equation}
  Note how the inner phase factor $e^{\pi i (y\epsilon_2-\epsilon_1\omega)}$ does not 
depend on $|x-y|$ or $|\xi-\omega|$. It is easy to see that we can find, for 
every $\epsilon_1,\epsilon_2 > 0$, a pair $(y,\omega)\in\RR^2$, such that 
$e^{\pi i (y\epsilon_2-\epsilon_1\omega)} = -1$. By a similar $2\epsilon$ 
argument to before, we can construct a sequence 
$\{(y_j,\omega_j,\epsilon_j)\}_{j\in\NN}$, such that $\epsilon_j\rightarrow 0$ 
as $j\rightarrow \infty$ and 
\begin{equation}
    \begin{split}
   \esssup_{x,\xi\in\RR} \int_\RR \int_\RR \left|\left\langle g, \bd 
M_{\omega_j-\xi} \bd T_{y_j-x} \left( g - e^{\pi i 
(y_j\epsilon_j-\epsilon_j\omega_j)}\bd M_{\epsilon_j} \bd T_{\epsilon_j} g 
\right) \right\rangle\right|~d\omega~dy \overset{j \rightarrow \infty} 
\rightarrow \int_\RR \int_\RR \left|\left\langle g, 2\bd M_{\omega} \bd T_{y} 
g\right\rangle\right|~d\omega~dy.
  \end{split}
  \end{equation}
  Similarly,
  \begin{equation}
    \begin{split}
   \esssup_{y,\omega\in\RR} \int_\RR \int_\RR \left|\left\langle g, \bd 
M_{\omega-\xi} \bd T_{y-x} \left( g - e^{\pi i (y\epsilon-\epsilon\omega)}\bd 
M_{\epsilon} \bd T_{\epsilon} g \right) \right\rangle\right|~d\xi~dx 
\overset{\epsilon \rightarrow 0} \rightarrow \int_\RR \int_\RR 
\left|\left\langle g, 2\bd M_{\xi} \bd T_{x} g\right\rangle\right|~d\xi~dx.
  \end{split}
  \end{equation}
  Consequently, $\|\text{osc}_{\mathcal U^\delta}\|_{\AAm} \overset{\delta 
\rightarrow 0} \rightarrow 2\|K_{\mathcal{G}(g)}\|_{\AAm}$ and the family 
$\mathcal U^\delta$ does not satisfy Eq. \eqref{eq:convergentcover}. Neither 
does any family of coverings constructed from regular phase space shifts of 
a fixed compact set $U\subset \RR^2$. A similar 
argument provides the same result for any other sensible definition of the 
STFT. 
\end{example}

While we cannot prove that there is no family of moderate, admissible coverings 
with the property Eq. \eqref{eq:convergentcover}, it is surely much harder to 
satisfy using Definition \ref{def:osckern}, than using classical 
theory~\cite{gr91}. A similar situation arises for the so-called 
$\alpha$-transform~\cite{daforastte08,fefo06}. However, in that situation, 
classical coorbit theory does not apply and we must rely on its generalized 
variant.

\begin{example}[The oscillation kernel for the 
$\alpha$-transform]\label{ex:ALPHAcover}
  For $\alpha\in [0,1[$ and a function $g\in\mathcal S(\RR)$, let $\mathcal 
G_\alpha(g):=\{g_{x,\xi}\}_{x,\xi\in\RR}$, with 
  \begin{equation}
    g_{x,\xi} := \bd T_x \bd M_\xi \bd D_{\beta_\alpha(\xi)} g,
  \end{equation}
  where $\beta_\alpha(\xi) := (1+|\xi|)^{-\alpha}$ and $\bd D_{a}g := a^{-1/2} 
g(\cdot/a)$ is the unitary dilation. Then 
  \begin{equation}
    g_{y,\omega}-g_{z,\eta} = \bd T_x \bd M_\xi \bd D_{\beta_\alpha(\xi)} (g - 
e^{-2\pi i\omega (z-y)}\bd T_{\beta_\alpha(\omega)^{-1}(z-y)}\bd 
M_{\beta_\alpha(\omega)(\eta-\omega)}\bd 
D_{\beta_\alpha(\eta)/\beta_\alpha(\omega)} g).
  \end{equation}  
  This suggests the construction of a moderate admissible covering from a 
countable subset of $\{ U^{\delta}_{x,\xi}\}_{x,\xi\in \RR}$, 
$U^{\delta}_{x,\xi} := 
(x-\beta_\alpha(\xi)\delta,x+\beta_\alpha(\xi)\delta)\times 
(\xi-\beta_\alpha(\xi)^{-1}\delta,\xi+\beta_\alpha(\xi)^{-1}\delta)$. Hence, 
$(z,\eta)\in Q_{y,\omega}$ implies
  \begin{equation}
    (z-y) \sim \beta_\alpha(\omega)\delta.
  \end{equation}
  Although $e^{-2\pi i\omega\delta\beta_\alpha(\omega)} = e^{-2\pi 
i\delta\omega(1+|\omega|)^{-\alpha}}$ converges to $1$ for $\delta\rightarrow 
0$, convergence speed decreases in $|\omega|$, for all $0\leq \alpha < 1$. 
Similar to Example \ref{ex:STFTcover}, the phase factor can behave arbitrarily 
bad, independent of the size of the covering elements. In \cite{daforastte08},
this problem is circumvented by redefining the oscillation to respect the group action.
\end{example}

The negative results obtained in the examples above motivate a more general 
definition of the oscillation kernel. With the following extended definition, 
the construction of a covering family with the property Eq. 
\eqref{eq:convergentcover} becomes a properly intuitive task, similar to the 
classical case~\cite{gr91}.

\begin{definition}[Definition \ref{def:osckern}a]\label{def:genosckern}
    The $\Gamma$-oscillation of a continuous tight frame $\Psi$ with 
respect to the moderate, admissible covering $\mathcal U$ of $X$ is defined by
    \begin{equation}
      \text{osc}_{\mathcal U,\Gamma}(x,y):= A^{-1}\sup_{z\in Q_y} |\langle 
\psi_x,\psi_y-\Gamma(y,z)\psi_z \rangle| = A^{-1}\sup_{z\in Q_y} 
|K_\Psi(x,y)-\Gamma(y,z)K_\Psi(x,z)|,
    \end{equation}
    where $Q_y:=\cup_{i\in I,y\in U_i} U_i$ and $\Gamma:X\times X \rightarrow 
\CC$ satisfies $|\Gamma|=1$.
\end{definition}

At first glance, the above definition might seem arbitrary, but it actually gives rise 
to a simple generalization of Theorem \ref{thm:discret1}.

\begin{theorem}[Theorem \ref{thm:discret1}a]\label{thm:discret2}
      Let $Y$ and $\Psi$ satisfy the conditions of Theorem \ref{thm:coorbits}. 
If there is some $\Gamma:X\times X \rightarrow \CC$, $|\Gamma|=1$, and a 
moderate, admissible covering $\mathcal U^\delta$ such that 
      \begin{equation}
        \|\text{osc}_{\mathcal 
U^\delta,\Gamma}\|_{\AAm}\left(\|K_\Psi\|_{\AAm}+\max\{C_{m,\mathcal U^\delta} 
\|K_\Psi\|_{\AAm},\|K_\Psi\|_{\AAm}+\|\text{osc}_{\mathcal 
U^\delta,\Gamma}\|_{\AAm}\}\right)< 1,
      \end{equation}
      for some $C_{m,\mathcal U^\delta} \geq \sup_{i\in I}\sup_{x,y\in 
U_i^\delta} m(x,y)$, then $\{\psi_{x_i}\}_{i\in I}$ is a Banach frame and an 
atomic decomposition for $ \text{Co}(\Psi,Y)$ if $x_i\in U_i$ for all $i\in I$.
\end{theorem}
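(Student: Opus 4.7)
The plan is to adapt the proof of Theorem~\ref{thm:discret1} from \cite{fora05} essentially verbatim, exploiting the single observation that $|\Gamma|\equiv 1$ makes multiplication by $\Gamma(y,z)$ an isometry on every modulus--based norm, and in particular preserves both the $\AAi$- and $\AAm$-norms that control the coorbit machinery. The unimodular factor $\Gamma$ plays the role of stripping off a ``global'' phase in $K_\Psi(x,y)$ that fails to decay under any refinement of the covering (compare Example~\ref{ex:STFTcover}); once that phase has been absorbed into $\Gamma$, the residual kernel oscillation is precisely what is needed to drive the original Fornasier--Rauhut argument.

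Fixing a bounded, $\mathcal{U}^\delta$-subordinate partition of unity $\{\phi_i\}_{i\in I}$ and sample points $x_i\in U_i$, I would set up the same two candidate discretization operators $T$ and $U$ as in \cite{fora05}, both mapping the reproducing range $K_\Psi(Y)$ into itself. The central estimate is a pointwise bound on the error integrand $K_\Psi(x,y)F(y)-K_\Psi(x,x_i)F(x_i)$, obtained by inserting the pivot $\Gamma(y,x_i)K_\Psi(x,x_i)F(y)$ and splitting into two differences. The first, $[K_\Psi(x,y)-\Gamma(y,x_i)K_\Psi(x,x_i)]F(y)$, is dominated for $y\in U_i$ directly by $\oscUGd(x,y)|F(y)|$, since $x_i\in U_i\subseteq Q_y$. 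The second, $K_\Psi(x,x_i)[\Gamma(y,x_i)F(y)-F(x_i)]$, is expanded via the reproducing identity $F=K_\Psi F$ to
\begin{equation*}
K_\Psi(x,x_i)\int\bigl[\Gamma(y,x_i)K_\Psi(y,z)-K_\Psi(x_i,z)\bigr]F(z)\,d\mu(z),
\end{equation*}
and the Hermitian conjugate symmetry $K_\Psi(u,v)=\overline{K_\Psi(v,u)}$ of the tight--frame kernel, together with the natural compatibility $\Gamma(y,z)=\overline{\Gamma(z,y)}$ of the phase factor, turns the integrand into a quantity bounded by $|K_\Psi(x,x_i)|\,\oscUGd(z,x_i)|F(z)|$ (using $y\in U_i\subseteq Q_{x_i}$).

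Passing these two pointwise bounds through the solidity of $Y$, the $\AAm$-boundedness of $K_\Psi$, and the weight constant $C_{m,\mathcal{U}^\delta}$ yields operator--norm estimates of exactly the form
\begin{equation*}
\|(K_\Psi-T)F\|_Y,\ \|(K_\Psi-U)F\|_Y \;\leq\; \|\oscUGd\|_{\AAm}\cdot M \cdot \|F\|_Y,
\end{equation*}
where $M$ is one of the two quantities inside the $\max$ in the hypothesis. The assumption thus guarantees that both operator--norm discrepancies are strictly smaller than $1$, so a Neumann series inverts $T$ and $U$ on $K_\Psi(Y)$ and produces both the atomic decomposition and the Banach frame at the sample points $x_i\in U_i$, with the same associated sequence spaces as in \cite{fora05}. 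I expect the principal subtlety to lie in the second split: the appearance of $\Gamma(y,x_i)$ inside a Hermitian conjugate requires the compatibility $\Gamma(y,z)=\overline{\Gamma(z,y)}$ so that the resulting quantity is again a $\Gamma$-oscillation rather than some dual object. This compatibility holds automatically in the natural examples (e.g., the phase appearing in Example~\ref{ex:STFTcover}, and, as will be shown later, in the warped time--frequency setting), and once it is in force the remainder of the argument is a direct transcription of the proof in \cite{fora05}.
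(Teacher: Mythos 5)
Your overall strategy---reduce everything to the invertibility of the discretization operator $\bd U_{\Psi}$ on the reproducing range $K_{\Psi}(Y)$, note that only the few lemmas of \cite{fora05} that touch the oscillation kernel need revisiting, and close with a Neumann series---is exactly the paper's. The difference, and the gap, is in how you split the error integrand. The paper attaches the unimodular correction to the \emph{sampled value}: it introduces the auxiliary operator $\bd S_{\Psi}G = K_{\Psi}\bigl(\sum_{i}G(x_i)\overline{\Gamma(\cdot,x_i)}\phi_i\bigr)$ and estimates $G-\bd S_{\Psi}G$ and $\bd S_{\Psi}G-\bd U_{\Psi}G$ separately. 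In both resulting integrands the conjugate phase $\overline{\Gamma(\cdot,x_i)}$ is absorbed by taking the modulus, and Hermitian symmetry $K_{\Psi}(x,y)=\overline{K_{\Psi}(y,x)}$ together with $x_i\in Q_y$ (resp.\ $x_i\in Q_x$, because the other variable runs over $\supp\phi_i\subseteq U_i^\delta$) turns each term directly into a $\Gamma$-oscillation. Only $|\Gamma|=1$ is ever used; no symmetry of $\Gamma$ is required.

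Your second split, by contrast, needs the compatibility $\Gamma(y,z)=\overline{\Gamma(z,y)}$ to recognize $|\Gamma(y,x_i)K_\Psi(y,z)-K_\Psi(x_i,z)|$ as an oscillation centered at $x_i$. That condition is not among the hypotheses of the theorem, and---more seriously---it fails for the paper's main application: for warped time-frequency systems one takes $\Gamma\bigl((x,\xi),(y,\omega)\bigr)=e^{2\pi i(\omega-\xi)x}$ (Theorems \ref{thm:mainres2} and \ref{thm:osckernel}), whose transpose is $e^{2\pi i(\xi-\omega)y}$ while its conjugate is $e^{2\pi i(\xi-\omega)x}$; these agree only when $(\xi-\omega)(x-y)\in\ZZ$. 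So the parenthetical claim that the compatibility ``will be shown later in the warped setting'' is false (it does hold for the STFT phase of Example \ref{ex:STFTcover}, which may be what misled you). Fortunately the gap is local and can be closed without the extra hypothesis: after writing $\Gamma(y,x_i)F(y)-F(x_i)=\int_X\bigl[\Gamma(y,x_i)K_\Psi(y,z)-K_\Psi(x_i,z)\bigr]F(z)\,d\mu(z)$ and conjugating, the integrand has modulus $|K_\Psi(z,y)-\Gamma(y,x_i)K_\Psi(z,x_i)|$, which is bounded by $\oscUGd(z,y)$---the oscillation centered at $y$, legitimate since $x_i\in Q_y$---rather than by $\oscUGd(z,x_i)$. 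With that correction, and the semi-discrete kernel bound of \cite[Lemmas 5.10--5.11]{fora05} to handle the factor $\sum_i|K_\Psi(x,x_i)|\phi_i(y)$, your decomposition also yields an estimate of the required form for arbitrary unimodular $\Gamma$.
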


\begin{remark}
  The result above is truly different from Theorem \ref{thm:discret1}, only 
when $\Gamma$ is not separable into two independent phase factors of the same 
form, i.e. 
  \begin{equation}
    \nexists\ \Gamma_1:X \rightarrow \CC \text{ with } |\Gamma_1|=1, \text{ such 
that } \Gamma(y,z) = \Gamma_1(y)^{-1}\Gamma_1(z), \text{ for all } y,z\in X.
  \end{equation}
  Otherwise, $\widetilde{\psi_x}:= \Gamma_1(x)\psi_x$ defines a continuous frame 
that provides essentially the same transform, gives rise to the same coorbit 
spaces and satisfies Theorem \ref{thm:discret1}.
\end{remark}

Proving Theorem \ref{thm:discret1} is is a lengthy affair, see \cite{fora05}, 
and requires a substantial number of interim results, most of which do not even 
reference the oscillation kernel. 
All this preparation can be done in exactly the same way to prove Theorem 
\ref{thm:discret2}. 
To be precise, the oscillation kernel appears only in the proofs for Lemmas 
5.5, 5.10 and 5.11, as well as Theorem 5.13 in \cite{fora05}. 
Moreover, the proofs of Lemmas 5.5, 5.10 and 5.11 can be executed identically 
for the generalized oscillation kernel from Definition \ref{def:genosckern}, 
requiring only $|K_\Psi(x,y)| = |\Gamma(x,y)K_\Psi(x,y)|$.

The crucial step for proving Theorem \ref{thm:discret2}, however, is the 
invertibility of the discretization operator $U_{\Psi}$, defined by
\begin{equation}
   \bd U_{\Psi}(G)(x):= \sum_{i\in I} c_i G(x_i)K_{\Psi}(x,x_i), \text{ for all 
} G\in Y
\end{equation}
where
\begin{equation}
   c_i := \int_{X} \phi_i(y)~ d\mu(y)
\end{equation}
and $\Phi:=\{\phi_i\}_{i\in I}$ is a partition of unity with respect to the 
moderate, admissible covering $\mathcal U^\delta := \{U_i^\delta\}_{i\in I}$, 
i.e. 
\begin{equation}
  \sum_{i\in I} \phi_i = 1\quad \text{ and } \quad \supp(\phi_i)\subseteq 
U_i^\delta.
\end{equation}

This is achieved by the following theorem, a variant of \cite[Theorem 5.13]{fora05}.

\begin{theorem}
  Let $Y$ and $\Psi$ satisfy the conditions of Theorem \ref{thm:coorbits}. If 
there is some $\Gamma:X\times X \rightarrow \CC$, $|\Gamma|=1$, and a moderate, 
admissible covering $\mathcal U^\delta$ such that 
  $\|\text{osc}_{\mathcal U^\delta,\Gamma}\|_{\AAm} < \delta$, then 
  \begin{equation}\label{eq:IDminusU}
       \|\operatorname{Id}-\bd U_{\Psi}\|_{K_{\Psi}(Y)\rightarrow K_{\Psi}(Y)} 
\leq \delta\left(\|K_\Psi\|_{\AAm}+\max\{C_{m,\mathcal U^\delta} 
\|K_\Psi\|_{\AAm},\|K_\Psi\|_{\AAm}+\delta\}\right).
  \end{equation}
  In particular, $\bd U_{\Psi}$ is bounded and if the RHS of Eq. 
\eqref{eq:IDminusU} is less or equal to $1$, then $\bd U_{\Psi}$ is boundedly 
invertible on $K_{\Psi}(Y)$
\end{theorem}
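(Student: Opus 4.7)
The plan is to adapt the proof of \cite[Theorem 5.13]{fora05} to the generalized $\Gamma$-oscillation, exploiting the reproducing property $G = K_\Psi(G)$ for $G \in K_\Psi(Y)$ (Theorem \ref{thm:coorbits}) together with the Hermitian symmetry $\overline{K_\Psi(y, z)} = K_\Psi(z, y)$ of the frame kernel. Using a partition of unity $\{\phi_i\}_{i \in I}$ subordinate to $\mathcal U^\delta$ with $c_i = \int_X \phi_i \, d\mu$, I first rewrite
\begin{equation*}
(\operatorname{Id} - \bd U_\Psi) G (x) = \sum_{i \in I} \int_X \phi_i(y) \bigl[K_\Psi(x, y) G(y) - K_\Psi(x, x_i) G(x_i)\bigr] \, d\mu(y).
\end{equation*}

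The key modification compared to \cite{fora05} is the insertion of the phase factor $\Gamma(y, x_i)$ when splitting the bracketed expression as
\begin{equation*}
\bigl[K_\Psi(x, y) - \Gamma(y, x_i) K_\Psi(x, x_i)\bigr] G(y) + K_\Psi(x, x_i) \bigl[\Gamma(y, x_i) G(y) - G(x_i)\bigr].
\end{equation*}
For $y \in \supp(\phi_i) \subseteq U_i^\delta$ one has $x_i \in Q_y$, so the first bracket is pointwise bounded by $\text{osc}_{\mathcal U^\delta, \Gamma}(x, y)$ via Definition \ref{def:genosckern}. For the second bracket I apply the reproducing property at both $y$ and $x_i$, producing an integrand $\Gamma(y, x_i) K_\Psi(y, z) - K_\Psi(x_i, z)$; Hermitian symmetry together with $|\Gamma| = 1$ shows that its modulus equals $|K_\Psi(z, y) - \Gamma(y, x_i) K_\Psi(z, x_i)|$, which is again bounded by $\text{osc}_{\mathcal U^\delta, \Gamma}(z, y)$.

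Assembling these estimates and using $\sum_i \phi_i = 1$ yields the pointwise bound
\begin{equation*}
\bigl|(\operatorname{Id} - \bd U_\Psi) G (x)\bigr| \leq \text{osc}_{\mathcal U^\delta, \Gamma}(|G|)(x) + \sum_{i \in I} \int_X \phi_i(y) |K_\Psi(x, x_i)| H(y) \, d\mu(y),
\end{equation*}
where $H(y) := \int_X \text{osc}_{\mathcal U^\delta, \Gamma}(z, y) |G(z)| \, d\mu(z)$. The second term is controlled through two alternative estimates on $|K_\Psi(x, x_i)|$: either the pointwise comparison $|K_\Psi(x, x_i)| \leq |K_\Psi(x, y)| + \text{osc}_{\mathcal U^\delta, \Gamma}(x, y)$ (valid for $y \in U_i^\delta$), which produces the factor $\|K_\Psi\|_{\AAm} + \delta$, or the weight-admissibility inequality $m(x, x_i) \leq C_{m, \mathcal U^\delta} m(x, y)$, which produces the factor $C_{m, \mathcal U^\delta} \|K_\Psi\|_{\AAm}$. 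Choosing whichever is sharper within each of the two suprema defining the $\AAm$-norm (cf.\ Eq.\ \eqref{eq:normA1}) yields the stated $\max$-expression, and the final norm bound follows from the solid-space estimate $\|K(F)\|_Y \leq \|K\|_{\AAm} \|F\|_Y$ in Eq.\ \eqref{eq:BScond}. Bounded invertibility on $K_\Psi(Y)$, under the assumption that the right-hand side of \eqref{eq:IDminusU} is at most one, then follows by a standard Neumann series argument.

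The main obstacle will be careful bookkeeping: tracking which of the two alternative estimates on $|K_\Psi(x, x_i)|$ is active in each of the symmetric suprema of $\|\cdot\|_{\AAm}$, and verifying that the Hermitian-symmetry manipulation (in particular ensuring that $\Gamma$, rather than $\overline{\Gamma}$, appears after conjugation) is compatible with the phase convention of Definition \ref{def:genosckern}. Once these details are settled, the remainder of the argument reduces, modulo the phase factor, to the manipulations in \cite[Theorem 5.13]{fora05}, to which the reader may refer for the full combinatorics.
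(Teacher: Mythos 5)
Your argument is correct and rests on the same two pillars as the paper's proof — inserting the unimodular factor $\Gamma(y,x_i)$ so that both error contributions are dominated by $\text{osc}_{\mathcal U^\delta,\Gamma}$, and converting the resulting pointwise bounds into $Y$-norm bounds via the kernel algebra — but the decomposition is organized differently. The paper introduces the auxiliary operator $\bd S_{\Psi} G = K_{\Psi}\bigl(\sum_i G(x_i)\overline{\Gamma(\cdot,x_i)}\phi_i\bigr)$ and splits $\operatorname{Id}-\bd U_\Psi = (\operatorname{Id}-\bd S_\Psi)+(\bd S_\Psi-\bd U_\Psi)$; there the oscillation multiplies $G(x_i)$ in one piece and the reproducing property is applied to the difference $G-G(x_i)\overline{\Gamma(\cdot,x_i)}$ in the other, and the second piece is closed using \cite[Lemma 5.11]{fora05} to control $H(y)=\sum_i|G(x_i)|\phi_i(y)$ — which is precisely where the $\max\{C_{m,\mathcal U^\delta}\|K_\Psi\|_{\AAm},\|K_\Psi\|_{\AAm}+\delta\}$ originates. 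Your "transposed" split puts the oscillation directly against $G(y)$ (yielding the clean bound $\text{osc}_{\mathcal U^\delta,\Gamma}(|G|)(x)$ with no outer $K_\Psi$ factor) and uses the reproducing property on $\Gamma(y,x_i)G(y)-G(x_i)$, thereby bypassing Lemma 5.11 entirely; your route (a), $|K_\Psi(x,x_i)|\le|K_\Psi(x,y)|+\text{osc}_{\mathcal U^\delta,\Gamma}(x,y)$, already closes the estimate with total constant $\delta\bigl(1+\|K_\Psi\|_{\AAm}+\delta\bigr)$. Two small points of bookkeeping: (i) this constant is in fact sharper than \eqref{eq:IDminusU}, but to see that it \emph{implies} \eqref{eq:IDminusU} you should note $\|K_\Psi\|_{\AAm}\ge 1$ (immediate from $K_\Psi\cdot K_\Psi=K_\Psi$ and submultiplicativity of $\|\cdot\|_{\AAm}$), since your first summand is $\delta$ rather than $\delta\|K_\Psi\|_{\AAm}$; and (ii) your route (b) is not as innocuous as stated — in your setup it leaves a discrete sum $\sum_i c_i|K_\Psi(x,x_i)|m(x,x_i)$ whose comparison with $\|K_\Psi\|_{\AAm}$ needs the sampling estimates of \cite[Lemmas 5.5, 5.10]{fora05} — but since $\max\{a,b\}\ge b$ route (a) alone suffices for the stated inequality. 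Finally, like the paper you should still cite \cite{fora05} for the fact that $\bd U_\Psi$ maps $K_\Psi(Y)$ into itself, so that the operator norm on the left of \eqref{eq:IDminusU} is well defined.
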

\begin{proof}
  For the assertion $\bd U_{\Psi}G\in K_{\Psi}(Y)$, please refer to 
\cite{fora05}. To prove the norm estimate, we introduce the auxiliary operator
      \begin{equation}
      \begin{split}
        \bd S_{\Psi} G(x) & := K_{\Psi}\left(\sum_{i\in I} 
G(x_i)\overline{\Gamma(\cdot,x_i)}\phi_i\right)(x).
      \end{split}
      \end{equation}
      Note that $K_{\Psi}$ equals the identity on $K_{\Psi}(Y)$, by Theorem 
\ref{thm:coorbits}; to confirm $K_{\Psi}(Y)\subseteq \bd L^\infty_{1/v}$ with 
continuous embedding, refer to \cite[Corollary 5.6]{fora05}. By the triangle 
inequality, 
      \begin{equation}
        \|G-\bd U_{\Psi}G\|_Y \leq \|G-\bd S_{\Psi}G\|_Y + \|\bd S_{\Psi}G-\bd U_{\Psi}G\|_Y.
      \end{equation}
      We now estimate both terms on the RHS separately.
      \begin{equation}
      \begin{split}
        \|G-\bd S_{\Psi} G\|_Y & = \|K_{\Psi} G - \bd S_{\Psi} G\|_Y \\ 
        & \leq \|K_{\Psi}\|_{\AAm}\|\sum_{i\in 
I}\left(G-G(x_i)\overline{\Gamma(\cdot,x_i)}\right)\phi_i\|_Y.
      \end{split}
      \end{equation}
      In order to estimate $\|\left(G-G(x_i)\overline{\Gamma(\cdot,x_i)}\right)\phi_i\|_Y$, 
examine
      \begin{equation}
      \begin{split}
        \lefteqn{\left|\sum_{i\in I} 
\left(G(x)-\overline{\Gamma(y,x_i)}G(x_i)\right)\phi_i(x)\right| = \left|\sum_{i\in I} 
\left(K_{\Psi}(G)(x)-\overline{\Gamma(x,x_i)}K_{\Psi}(G)(x_i)\right)\phi_i(x)\right|}\\
        &= \left|\sum_{i\in I} \int_{X} 
G(y)\left(K_{\Psi}(x,y)-\overline{\Gamma(x,x_i)}K_{\Psi}(x_i,y)\right)~d\mu(y)\phi_i(x)\right|\\
&\leq \sum_{i\in I} \int_{X} 
G(y)\left|K_{\Psi}(y,x)-\overline{\Gamma(x,x_i)}K_{\Psi}(y,x_i)\right|~d\mu(y)\phi_i(x)\\
        &\leq \sum_{i\in I} \int_{X} |G(y)| \text{osc}_{\mathcal 
U^\delta,\Gamma}(y,x)~d\mu(y)\phi_i(x)\\   
        &= \sum_{i\in I} \text{osc}_{\mathcal 
U^\delta,\Gamma}^\ast(|G|)(x)\phi_i(x) = \text{osc}_{\mathcal 
U^\delta,\Gamma}^\ast(|G|)(x).
      \end{split}
      \end{equation}
      In the derivations above, we used $K_{\Psi}(x,y) = \overline{K_{\Psi}(y,x)}$ and the property $\supp(\phi_i)\subseteq U_i^\delta \in \mathcal{U}$ of the PU $\Phi=(\phi_i)_{i\in I}$.
      We obtain 
      \begin{equation}
        \|G-\bd S_{\Psi} G\|_Y \leq \|K_{\Psi}\|_{\AAm}\|\text{osc}_{\mathcal 
U^\delta,\Gamma}\|_{\AAm}\|G\|_Y,
      \end{equation}
      since $\|\oscUGd^\ast\|_{\AAm} = \|\oscUGd\|_{\AAm}$. 
      
      Now, we estimate $\|\bd S_{\Psi}G-\bd U_{\Psi}G\|_Y$. Note that
      \begin{equation}
       \begin{split}
        \lefteqn{|\bd S_{\Psi}(G)(x) - \bd U_{\Psi}(G)(x)|}\\
        & = \left|\sum_{i\in I} \int_X G(x_i)\phi_i(y) 
\left(\overline{\Gamma(y,x_i)}K_{\Psi}(x,y)-K_{\Psi}(x,x_i)\right)~d\mu(y)\right|\\
        & \leq \sum_{i\in I} \int_X |G(x_i)|\phi_i(y) 
\left|K_{\Psi}(x,y)-\Gamma(y,x_i)K_{\Psi}(x,x_i)\right|~d\mu(y)\\
        & \leq \sum_{i\in I} \int_X |G(x_i)|\phi_i(y)\oscUGd(x,y)~d\mu(y),
       \end{split}       
      \end{equation}
      where we used $\supp(\phi_i)\subseteq U_i \in \mathcal{U}^\delta$ once more.
      
      Define $H(y):= \sum_{i\in I} |G(x_i)|\phi_i(y)$, then by \cite[Lemma 
5.11]{fora05} and solidity of $Y$:
      \begin{equation}
      \begin{split}
	\|\bd S_{\Psi}G-\bd U_{\Psi}G\|_Y & \leq \|\text{osc}_{\mathcal 
U^\delta,\Gamma}\|_{\AAm}\|H\|_Y\\
	& \leq \max\{C_{m,\mathcal U^\delta} 
\|K_\Psi\|_{\AAm},\|K_\Psi\|_{\AAm}+\|\text{osc}_{\mathcal 
U^\delta,\Gamma}\|_{\AAm}\}\|\text{osc}_{\mathcal 
U^\delta,\Gamma}\|_{\AAm}\|G\|_Y.
      \end{split}
      \end{equation}
      Insert $\|\text{osc}_{\mathcal U^\delta,\Gamma}\|_{\AAm}<\delta$ to 
complete the proof.      
\end{proof}

With the result above in place and the changes discussed earlier in this 
section, the proof of suitable variants of \cite[Theorems 5.7, 5.8]{fora05} 
using $\text{osc}_{\mathcal U^\delta,\Gamma}$ is identical to the one presented 
by Fornasier and Rauhut~\cite{fora05}. 

Our own result in Theorem \ref{thm:discret2} is weaker than these variants of \cite[Theorems 5.7, 5.8]{fora05}
and therefore implied.

This concludes our discussion of abstract coorbit and discretization theory, note again 
that a fully fledged variant of \cite[Section 5]{fora05}, adjusted to the $\Gamma$-oscillation
can be found in \cite{ournote}. In 
the following sections, we will construct a family of time-frequency 
representations and apply the results obtained so far in their context.

\section{Warped time-frequency representations}\label{sec:warp}

In this section time-frequency representations with uniform frequency resolution on nonlinear frequency scales are constructed and their basic properties are investigated. In particular, we show that these transforms are continuous, norm preserving and invertible. 

Our method, motivated by the discrete systems in \cite{howi14}, is based on the simple premise of a function system $(\psi_{x,\xi})_{(x,\xi)\in D\times\RR}$, such that $\psi_{x,\xi} = \bd T_\xi\psi_{x,0}$, where $\psi_{x,0}$ and $\psi_{y,0}$ are of identical shape when observed on the desired frequency scale, for all $x,y\in\RR$. The frequency scale itself is determined by the so-called \emph{warping function}.
Generally, any bijective, continuous and increasing function $F:D\mapsto \RR$, where $D$ is an interval, specifies a (frequency) scale on $D$. More explicitly, for a prototype function $\theta:\RR\mapsto\CC$ and warping function $F$, the time-frequency atoms are given by
\begin{equation}\label{eq:thetafm}
 g_{x,\xi} := \bd T_\xi \mathcal F^{-1} \theta_{F,x}, \, \text{where } \theta_{F,x} = \bd (T_{F(x)}\theta)\circ F,
\end{equation}
up to a suitable normalization factor, see below. For the sake of simplicity, we consider here only the two most important cases $D=\RR$ or $D=\RR^+$.

This method allows for a large amount of flexibility when selecting the desired frequency scale, but we also recover classical time-frequency and time-scale systems:  
Clearly, we obtain a regular system of translates for any linear function $F$, while observing $(T_x\theta)\circ \log_a = (\theta\circ \log_a)(\cdot/a^x)$ shows that logarithmic $F$ provides a system of dilates, respectively. Therefore, short-time Fourier~\cite{ga46,gr01,fest98,fest03} and wavelet~\cite{ma09-1,da92} transforms will turn out to be special cases of our setting. 
In order to obtain \emph{nice} systems, we require the derivative of the inverse warping function $(F^{-1})'$ to be a $v$-moderate weight function.

\begin{definition}
  \begin{itemize}
    \item A weight function $v: \RR \rightarrow \RR^+$ is called 
\emph{submultiplicative} if
      \begin{equation}
	v(x+y)\leq v(x)v(y).
	\label{}
      \end{equation}
    \item A weight function $w: \RR \rightarrow \RR^+$ is called 
\emph{$v$-moderate} if
      \begin{equation}\label{eq:moderateness}
	w(x+y) \leq C v(x) w(y),
      \end{equation}
      for some submultiplicative weight function $v$ and constant $C<\infty$.
  \end{itemize}
\end{definition}

Submultiplicative and moderate weight functions are an important concept in the 
theory of function spaces, as they are closely related to the translation 
invariance of the corresponding weighted spaces \cite{fe79-2,gr01}, see 
also~\cite{gr07} for an in depth analysis of weight functions and their role in 
harmonic analysis.

\begin{definition}\label{def:warpfun}
  Let $D\in\{\RR,\RR^+\}$. A bijective function $F: D \rightarrow \RR$ is called \emph{warping function}, if $F \in \mathcal C^1(D)$ with $F'>0$, $|t_0|<|t_1| \Rightarrow F'(t_1)<F'(t_0)$ and the associated weight function
  \begin{equation}
    w(t) = \left( F^{-1} \right)'(t) = \frac{1}{F'\left(F^{-1}(t) \right)},
    \label{}
  \end{equation}
  is $v$-moderate for some submultiplicative weight $v$. If $D=\RR$, we additionally require $F$ to be odd.
\end{definition}

\begin{remark}\label{rem:transinv}
  Moderateness of $w = \left( F^{-1} \right)'$ ensures translation invariance of the associated \emph{weighted $\bd L^p$ spaces}. In particular, 
  \begin{equation}\label{eq:moderatenormbound}
    \norm{(T_x\theta)\circ F}^2_{\LtD} = \norm{T_x \theta}^2_{\bd L^2_{\sqrt{w}}(\RR)} \leq Cv(F(x)) \norm{\theta}^2_{\bd L^2_{\sqrt{w}}(\RR)}
  \end{equation}
  holds for all $\theta \in \bd L^2_{\sqrt{w}}(\RR)$. As usual, $\mathbf{L}_w^2(\RR)$ is the space of functions $f~:~\RR\mapsto\CC$, such that $wf\in \bd L^2(\RR)$.
\end{remark}

\begin{remark}
  The definition above only allows warping functions with nonincreasing derivative and, if $D=\RR$, we also require point-symmetry. Both constraints can be easily weakened by extending the warping function definition to any function $F$, such that there exists another function $F_0$ satisfying $F^{-1}(0) = F_0^{-1}(0)$ and $F'(x)\geq F_0'(x)$, with $F_0$ being a warping function as per Definition \ref{def:warpfun}. The results in this section still hold for such warping functions $F$. 
  
  Similar to the results in \cite{howi14}, it might be possible to connect the coorbit spaces and discretization results with respect to $F_0$ to the representation with respect to $F$. A thorough investigation of this issue is beyond the scope of this paper, however. Since all the examples that we wish to discuss have nonincreasing derivative in the first place, we will omit the details of working with two distinct warping functions $F$, $F_0$.
\end{remark}

From here on, we always assume $F$ to be a warping function as per Definition \ref{def:warpfun} and $w=(F^{-1})'$ the associated $v$-moderate weight. The resulting continuously indexed family of time-frequency atoms is given as follows.

\begin{definition}\label{def:warpedsystem}
  Let $F:D\rightarrow \RR$, $D\in\{\RR,\RR^+\}$ be a warping function and $\theta \in \bd L^2_{\sqrt{w}}(\RR)$. The \emph{continuous warped time-frequency system} with respect to $\theta$ and $F$ is defined by $\mathcal G(\theta,F):=\{g_{x,\xi}\}$, where 
  \begin{equation}
    g_{x,\xi} := T_{\xi} \widecheck{g_x},\quad g_x := \sqrt{F'(x)} (\bd T_{F(x)} \theta)\circ F \text{ for all } x\in D,\ \xi\in\RR.
  \end{equation}
  The \emph{phase space} associated with this family is $D \times \RR$.
\end{definition}

Clearly, $\mathcal G(\theta,F) \subset \mathcal F^{-1}\bd L^2(D)$, enabling the definition of a transform on $\mathcal F^{-1}(\LtD)$ by taking inner products with its elements.

\begin{definition}
The \emph{$F$-warped time-frequency transform} of $f\in \mathcal F^{-1}(\LtD)$ with respect to the warping function $F$ and the prototype $\theta\in \bd L^2_{\sqrt{w}}(\RR)$ is defined by
\begin{equation}\label{eq:warpedtransform}
  V_{\theta, F}f: D \times \RR \rightarrow \bbC, \;\; (x,\xi)\mapsto \inner{f}{g_{x,\xi}}.
\end{equation}
\end{definition}

Eq. \eqref{eq:moderatenormbound} and the above definition, immediately yield $V_{\theta, F}f \in \bd L^\infty(D\times \RR)$. However, using $F\in \mathcal C^1$ and translation-invariance of $\bd L^2_{\sqrt{w}}(\RR)$, we can deduce that also $V_{\theta,F}f\in \mathcal C(D\times \RR)$.

\begin{proposition}
  Let $F$ be a warping function and $\theta\in \bd L^2_{\sqrt{w}}(\RR)$. Then 
  \begin{equation}
    V_{\theta,F}f\in \mathcal C(D\times \RR) \cap \bd L^\infty(D\times \RR),\ \text{ for all } f\in \mathcal F^{-1}(\LtD). 
  \end{equation}
\end{proposition}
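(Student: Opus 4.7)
My plan is to prove both assertions by exhibiting strong (norm) continuity of the map $(x,\xi)\mapsto g_{x,\xi}\in\bd L^2(\RR)$, together with a uniform bound on $\|g_{x,\xi}\|_2$. Cauchy--Schwarz then transports these properties to $V_{\theta,F}f$ via the inner product, since $f\in\mathcal F^{-1}(\bd L^2(D))\subset\bd L^2(\RR)$.

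For the $\bd L^\infty$ bound, I note that $\|g_{x,\xi}\|_2=\|g_x\|_2$ since translation is an isometry. Applying the substitution $u=F(t)-F(x)$ with $dt=w(u+F(x))\,du$, together with the identity $F'(x)w(F(x))=1$, gives
\begin{equation*}
\|g_x\|_2^2
=F'(x)\int_{\RR}|\theta(u)|^2\,w(u+F(x))\,du.
\end{equation*}
The $v$-moderateness of $w$ provides an inequality of the form $w(u+F(x))\le C\,v(u)\,w(F(x))$ (or the dual version $Cv(F(x))w(u)$), and combining the former with $F'(x)w(F(x))=1$ bounds $\|g_x\|_2^2$ by a constant times a weighted $\bd L^2$ norm of $\theta$ that is independent of $x$. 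Cauchy--Schwarz then yields $|V_{\theta,F}f(x,\xi)|\le\|f\|_2\|g_x\|_2\le C'\|f\|_2$ uniformly in $(x,\xi)$. The delicate point is choosing the correct direction of the moderateness inequality so that the factor $F'(x)$ is absorbed; this is the one place where the hypothesis $F'>0$, the non-increasing nature of $F'$, and the precise definition $w=(F^{-1})'$ interact, and I expect to have to argue a little carefully that the resulting bound does not depend on $x$.

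For continuity, it suffices to show that $(x,\xi)\mapsto g_{x,\xi}$ is norm-continuous into $\bd L^2(\RR)$, since the inner product $\langle f,\cdot\rangle$ is continuous. I would split this into three continuity facts and compose them. First, $x\mapsto\sqrt{F'(x)}$ is continuous on $D$ because $F\in\mathcal C^1$ with $F'>0$. Second, the map $y\mapsto T_y\theta$ is strongly continuous from $\RR$ into $\bd L^2_{\sqrt w}(\RR)$; this is a standard consequence of $v$-moderateness of the weight $w$ (approximate $\theta$ by $\mathcal C_c$ functions in $\bd L^2_{\sqrt w}$, noting that translation acts continuously on $\mathcal C_c$ and using $\|T_y\theta\|_{\bd L^2_{\sqrt w}}\le\sqrt{Cv(y)}\|\theta\|_{\bd L^2_{\sqrt w}}$ to transfer this to the weighted space). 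Composing with the continuous function $F$ gives continuity of $x\mapsto T_{F(x)}\theta$ into $\bd L^2_{\sqrt w}(\RR)$. Third, the operator $\phi\mapsto\phi\circ F$ is an isometry $\bd L^2_{\sqrt w}(\RR)\to\bd L^2(D)$ by the identity in Remark \ref{rem:transinv}, so it transfers continuity to the composed maps. Combining these pieces yields norm-continuity of $x\mapsto g_x$ in $\bd L^2(D)$, and hence of its inverse Fourier transform in $\bd L^2(\RR)$. Finally, since translation is strongly continuous on $\bd L^2(\RR)$ and the pair $(x,\xi)\mapsto T_\xi\widecheck{g_x}$ is the composition of the continuous maps $x\mapsto\widecheck{g_x}$ and $(h,\xi)\mapsto T_\xi h$, I conclude joint norm-continuity.

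The main obstacle is really the $\bd L^\infty$ part: producing the uniform bound on $\|g_x\|_2$ requires identifying the \emph{correct} form of the moderateness inequality so that the prefactor $F'(x)$ is cancelled by $w(F(x))$. Once this is settled, the continuity argument is routine, resting on nothing more than standard strong continuity of translations on weighted $\bd L^2$ spaces with moderate weights and the isometric embedding provided by $F$.
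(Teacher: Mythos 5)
Your continuity argument is correct and is essentially the paper's own: the paper likewise peels off the modulation (estimating $\norm{M_{-\xi}g_x-M_{-\tilde\xi}g_{\tilde x}}_{\LtD}$ by inserting $M_{-\tilde\xi}g_{x}$), reduces to norm continuity of $x\mapsto g_x$, and then uses exactly your three ingredients --- continuity of $\sqrt{F'}$, strong continuity of translations on $\bd L^2_{\sqrt{w}}(\RR)$ coming from moderateness, and the isometry $\phi\mapsto\phi\circ F$ of Remark \ref{rem:transinv} --- wrapped into a single $2\epsilon$ argument. Nothing to object to there.

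The $\bd L^\infty$ half is where the ``delicate point'' you flag is a genuine gap, and your plan as written does not close it. Your substitution correctly gives $\norm{g_x}_{\LtD}^2=F'(x)\int_\RR\abs{\theta(u)}^2\,w(u+F(x))\,du$ with $F'(x)w(F(x))=1$. But the direction of moderateness that absorbs the prefactor, namely $w(u+F(x))\leq C\,v(u)\,w(F(x))$, leaves you with $C\int_\RR\abs{\theta(u)}^2v(u)\,du$: the weight that survives is $v$, not $w$. The hypothesis is only $\theta\in\bd L^2_{\sqrt{w}}(\RR)$, and moderateness yields $w(u)\leq Cw(0)v(u)$, i.e.\ the embedding $\bd L^2_{\sqrt{v}}(\RR)\subseteq\bd L^2_{\sqrt{w}}(\RR)$ --- the wrong way around --- so finiteness of your bound is not guaranteed. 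The dual direction $w(u+F(x))\leq C\,v(F(x))\,w(u)$ (the one encoded in Eq.~\eqref{eq:moderatenormbound}) gives $\norm{g_x}^2_{\LtD}\leq C\,\bigl(v/w\bigr)(F(x))\,\norm{\theta}^2_{\bd L^2_{\sqrt{w}}(\RR)}$, finite for each $x$ but uniform in $x$ only if $v\lesssim w$. The argument therefore closes only when one may take $v\asymp w$, equivalently when $w$ is almost submultiplicative in the sense of Eq.~\eqref{eq:quasisubmult} --- a condition the paper imposes only from Section \ref{sec:warpedcoorbits} onward, though it holds for all the examples. In fairness, the paper's own treatment of this half is the single sentence ``by definition'' pointing at Remark \ref{rem:transinv}, which suffers from exactly the same non-uniformity; you have not missed anything the paper supplies. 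But to finish your proof you must either assume $\theta\in\bd L^2_{\sqrt{v}}(\RR)$, or invoke Eq.~\eqref{eq:quasisubmult} so that $v$ may be taken comparable to $w$; without one of these the uniform bound on $\norm{g_x}_{\LtD}$ can genuinely fail.
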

\begin{proof}
  By definition, $V_{\theta, F}f \in \bd L^\infty(D\times \RR)$. 
  We further compute the following estimate
  \begin{equation}
    \begin{split}
      |V_{\theta,F}f(x,\xi) &- V_{\theta,F}f(\tilde x, \tilde \xi)| = \abs{\inner{\hat f}{M_{-\xi} g_x - M_{-\tilde \xi} g_{\tilde x}}} \\
      & \leq \norm{\hat f}_{\LtD} \left( \norm{M_{-\xi} g_x - M_{-\tilde \xi}g_{x}}_{\LtD} + \norm{M_{-\tilde \xi}(g_x - g_{\tilde x})}_{\LtD} \right).
    \end{split}
    \label{}
  \end{equation}
  Since modulations are continuous on $\LtD$ it is sufficient to show that $\norm{g_{x}-g_{\tilde x}}_{\bd L^2}\rightarrow 0$, as $\tilde x$ tends to $x$.
  To see this we calculate
  \begin{equation}
    \begin{split}
      \norm{g_{x}-g_{\tilde x}}^2_{\bd L^2}&=\int_D \left| \sqrt{F'(x)}(T_{F(x)}\theta)(F(t))-\sqrt{F'(\tilde x)}(T_{F(\tilde x)}\theta)(F(t)) \right |^2 \; dt \\
      &= F'(x) \norm{T_{F(x)}\theta - \sqrt{F'(\tilde x)/F'(x)} T_{F(\tilde x)}\theta}^2_{\bd L^2_{\sqrt{w}}} \\
      &= F'(x) \norm{T_{F(x)}\theta -T_{F(\tilde x)}\theta +T_{F(\tilde x)}\theta - \sqrt{F'(\tilde x)/F'(x)} T_{F(\tilde x)}\theta}^2_{\bd L^2_{\sqrt{w}}}. 
  \end{split}
  \end{equation}
  Now a $2\epsilon$ argument finishes the proof since $\sqrt{F'(\tilde x)/F'(x)} \rightarrow 1$, $F(\tilde x) \rightarrow F(x)$ as $\tilde x \rightarrow x$ and translations are continuous on the weighted space $\bd L^2_{\sqrt{w}}$ due to moderateness of the weight function $w$.
\end{proof}

Indeed, $V_{\cdot, F}$ also possesses a norm-preserving property similar to the orthogonality relations (Moyal's formula~\cite{mo49,gr01}) for the short-time Fourier transform.

\begin{theorem}\label{thm:orthrel}
  Let $F$ be a warping function and $\theta_1, \theta_2 \in \bd L^2_{\sqrt{w}}$.
  Furthermore, assume that $\theta_1$ and $\theta_2$ fulfill the admissibility condition 
  \begin{equation}
    |\langle \theta_1,\theta_2 \rangle| <\infty.
    \label{eq:admissibilityC}
  \end{equation}
  Then the following holds for all $f_1,f_2 \in \mathcal F^{-1}(\LtD)$:
  \begin{equation}
    \int_D \int_{\RR} V_{\theta_1,F}f_1(x,\xi) \overline{V_{\theta_2,F}f_2(x,\xi)}\; d\xi dx = \inner{f_1}{f_2}\langle \theta_2,\theta_1 \rangle.
    \label{}
  \end{equation}
  In particular, if $\theta\in \bd L^2_{\sqrt{w}}$ is normalized in the (unweighted) $\bd L^2$ sense, then
  \begin{equation}
    \|V_{\theta,F} f\|_{\bd L^2(D\times \RR)} = \|f\|_{\bd L^2(\RR)}.
  \end{equation}
  \end{theorem}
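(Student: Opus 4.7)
The plan is to compute the double integral by first integrating in $\xi$ for fixed $x$ via Parseval on $\RR$, then applying the change of variables $u=F(x)$ in the $x$-integral, and finally using Parseval once more in $t$. The crucial observation is that, by unitarity of $\mathcal F$ on $\mathbf L^2(\RR)$, for any $f \in \mathcal F^{-1}(\LtD)$ we may rewrite
\begin{equation*}
V_{\theta,F}f(x,\xi) = \langle f, T_\xi \check{g_x}\rangle = \langle \hat f, \bd M_{-\xi} g_x\rangle = \int_D \hat f(t)\,\overline{g_x(t)}\, e^{2\pi i \xi t}\,dt,
\end{equation*}
so that $\xi \mapsto V_{\theta,F}f(x,\xi)$ is the inverse Fourier transform (on $\RR$) of the function $t \mapsto \hat f(t)\overline{g_x(t)}$, extended by zero outside $D$. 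This function lies in $\mathbf L^2(\RR)$ since $\hat f \in \LtD$ and $g_x \in \mathbf L^\infty \cap \LtD$ (the latter from Remark~\ref{rem:transinv}).

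With this identification, Parseval's theorem on $\mathbf L^2(\RR)$ yields, for each fixed $x \in D$,
\begin{equation*}
\int_\RR V_{\theta_1,F}f_1(x,\xi)\overline{V_{\theta_2,F}f_2(x,\xi)}\,d\xi = \int_D \hat f_1(t)\overline{\hat f_2(t)}\,\overline{g_{1,x}(t)}g_{2,x}(t)\,dt,
\end{equation*}
where $g_{j,x} := \sqrt{F'(x)}(\bd T_{F(x)}\theta_j)\circ F$. Integrating over $x \in D$ and interchanging the order of integration via Fubini (to be justified using the admissibility hypothesis \eqref{eq:admissibilityC} together with $\hat f_j \in \LtD$; Cauchy--Schwarz bounds the absolute double integral by $\|\hat f_1\|_2\|\hat f_2\|_2 \,|\langle\theta_1,\theta_2\rangle|$ after the substitution below), I obtain
\begin{equation*}
\int_D\int_\RR V_{\theta_1,F}f_1(x,\xi)\overline{V_{\theta_2,F}f_2(x,\xi)}\,d\xi\,dx = \int_D \hat f_1(t)\overline{\hat f_2(t)}\left(\int_D \overline{g_{1,x}(t)}g_{2,x}(t)\,dx\right)dt.
\end{equation*}

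The inner integral is independent of $t$: substituting $u = F(x)$, $du = F'(x)\,dx$, gives
\begin{equation*}
\int_D F'(x)\,\overline{\theta_1(F(t)-F(x))}\,\theta_2(F(t)-F(x))\,dx = \int_\RR \overline{\theta_1(F(t)-u)}\theta_2(F(t)-u)\,du = \langle \theta_2,\theta_1\rangle,
\end{equation*}
where the last equality uses translation invariance of Lebesgue measure on $\RR$. Combining and applying Plancherel once more (noting $\hat f_1, \hat f_2$ are supported in $D$) produces
\begin{equation*}
\int_D \hat f_1(t)\overline{\hat f_2(t)}\,dt\cdot\langle\theta_2,\theta_1\rangle = \langle f_1,f_2\rangle\,\langle\theta_2,\theta_1\rangle,
\end{equation*}
which is the desired identity. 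The norm-preservation statement follows at once by setting $\theta_1=\theta_2=\theta$, $f_1=f_2=f$.

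The main technical obstacle is the Fubini step: one needs an absolute integrability estimate that survives the interchange. The natural route is to perform the substitution $u=F(x)$ first inside the absolute value, producing the integrand $|\hat f_1(t)\hat f_2(t)|\cdot|\theta_1(F(t)-u)\theta_2(F(t)-u)|$, then apply Cauchy--Schwarz in $u$ (uniformly in $t$) using $\theta_1,\theta_2 \in \mathbf L^2_{\sqrt w}$ together with \eqref{eq:admissibilityC}; the estimate then closes via $\hat f_1,\hat f_2 \in \LtD$. Everything else is a straightforward unfolding of definitions, so this integrability bookkeeping is the only delicate point.
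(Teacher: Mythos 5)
Your proposal is correct and follows essentially the same route as the paper's own proof: rewrite $V_{\theta_i,F}f(x,\xi)$ as the inverse Fourier transform in $\xi$ of $\hat f\,\overline{g^i_x}$, apply Parseval in $\xi$, reduce the inner $x$-integral to $\langle\theta_2,\theta_1\rangle$ by the substitution $u=F(x)$ (the paper uses the equivalent $u=F(t)-F(x)$), and finish with Plancherel in $t$. Your added care with the Fubini step is a welcome refinement the paper glosses over; the only nitpick is that the absolute bound involves $\langle|\theta_1|,|\theta_2|\rangle$ rather than $|\langle\theta_1,\theta_2\rangle|$, which is fine once \eqref{eq:admissibilityC} is read as absolute convergence of the defining integral.
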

\begin{proof}
  The elements of $\mathcal G(\theta_1,F)$ and $\mathcal G(\theta_2,F)$ will be denoted by $g^1_{x,\xi}$ and $g^2_{x,\xi}$, respectively.
  We use the fact that $V_{\theta_i,F}f(x,\xi)=\mathcal F^{-1}(\hat f \cdot \overline{g^i_{x}})(\xi)$ for $i=1,2$ to calculate
  \begin{equation}
    \begin{split}
      \int_D \int_\RR & V_{\theta_1,F}f_1(x,\xi)\overline{V_{\theta_2,F}f_2(x,\xi)} \; d\xi dx \\
      &=\int_D \int_\RR \mathcal F^{-1}(\hat f_1 \cdot \overline{g^1_{x}})(\xi) \overline{\mathcal F^{-1}(\hat f_2 \cdot \overline{g^2_{x}})(\xi)} \; d\xi dx\\
      &= \int_D \hat f_1(t) \overline{\hat f_2(t)} \int_D \overline{g^1_{x}(t)} g^2_{x}(t) \; dx dt
    \end{split}
    \label{}
  \end{equation}
  Using the substitution $u=F(t)-F(x)$ we can simplify the inner integral
  \begin{equation}
    \begin{split}
      \int_D \overline{g^1_{x}(t)} g^2_{x}(t) \; dx &= \int_D F'(x) \overline{\theta_1(F(t)-F(x))} \theta_2(F(t)-F(x)) \; dx \\
      &=\int_\RR \overline{\theta_1(u)} \theta_2(u) \; du = \langle \theta_2,\theta_1 \rangle.
    \end{split}
    \label{}
  \end{equation}
  The desired results follow using Parseval's formula (and setting $f_1 = f_2 = f$ and $\theta_1 = \theta_2 = \theta$).
\end{proof}

The orthogonality relations are tremendously important, because they immediately yield an inversion formula for $V_{\theta, F}$, similar to the inversion formula for wavelets and the STFT. They even imply that $\{g_{x,\xi}\}_{x\in D,\xi\in\RR}$ forms a continuous tight frame with frame bound $\|\theta\|_{\bd L^2}^2$. Note that the admissibility condition Eq. \eqref{eq:admissibilityC} is always satisfied if $D=\RR$. In that case $w=(F^{-1})'$ is bounded below, implying $\bd L^2_{\sqrt{w}} \subseteq \bd L^2$. On the other hand, if $D=\RR^+$, $w$ can never be bounded below and the admissibility condition is a real restriction. Moreover, for $F=\log$, $\theta_1,\theta_2\in\LtR$ is equivalent to $g^1_{F^{-1}(0),0},g^2_{F^{-1}(0),0}$ being admissible wavelets, i.e. $g^1_{F^{-1}(0),0},g^2_{F^{-1}(0),0}$ satisfy the classical wavelet admissibility condition.

\begin{corollary}
  Given a warping function $F$ and some nonzero $\theta\in \bd L^2_{\sqrt{w}}\cap \bd L^2$.
  Then any $f \in \mathcal F^{-1}(\LtD)$ can be reconstructed from $V_{\theta, F}f$ by
  \begin{equation}
    f = \frac{1}{\|\theta\|_{\bd L^2}} \int_D \int_\RR V_{\theta,F}f(x,\xi) g_{x,\xi} \; d\xi dx.
    \label{}
  \end{equation}
  The equation holds in the weak sense.
\end{corollary}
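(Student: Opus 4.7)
The plan is to interpret the reconstruction formula in the weak sense as an equality of sesquilinear forms against arbitrary test functions $h\in\mathcal{F}^{-1}(\LtD)$, and then reduce everything to a direct application of Theorem~\ref{thm:orthrel}. Specifically, we need to show that for every $h\in\mathcal{F}^{-1}(\LtD)$,
\begin{equation}
\left\langle \frac{1}{\|\theta\|_{\bd L^2}^{2}} \int_D \int_\RR V_{\theta,F}f(x,\xi)\, g_{x,\xi}\, d\xi\, dx,\ h \right\rangle = \langle f,h\rangle,
\end{equation}
where (as the argument below shows) the factor in the statement should read $1/\|\theta\|_{\bd L^2}^{2}$ rather than $1/\|\theta\|_{\bd L^2}$, consistent with the normalization in Theorem~\ref{thm:orthrel}.

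The first step is to pair the claimed identity against an arbitrary $h\in\mathcal F^{-1}(\LtD)$ and move the inner product inside the integral, which is the standard meaning of weak convergence. This yields
\begin{equation}
\int_D \int_\RR V_{\theta,F}f(x,\xi)\, \langle g_{x,\xi}, h\rangle\, d\xi\, dx.
\end{equation}
By definition, $\langle g_{x,\xi}, h\rangle = \overline{\langle h, g_{x,\xi}\rangle} = \overline{V_{\theta,F}h(x,\xi)}$, so the integrand coincides exactly with the one appearing on the left-hand side of the identity in Theorem~\ref{thm:orthrel} applied with $\theta_1=\theta_2=\theta$ and $f_1=f$, $f_2=h$. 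That theorem then directly gives that the above integral equals $\langle f, h\rangle\langle \theta,\theta\rangle = \|\theta\|_{\bd L^2}^{2}\,\langle f,h\rangle$, from which the claim follows after dividing by $\|\theta\|_{\bd L^2}^{2}$.

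The only subtlety is to verify that the hypotheses of Theorem~\ref{thm:orthrel} are met: we need $\theta\in\bd L^2_{\sqrt{w}}$, which is assumed, and the admissibility condition $|\langle \theta,\theta\rangle|<\infty$, which holds because $\theta\in\bd L^2$ is also assumed (this is the reason the corollary requires $\theta\in \bd L^2_{\sqrt{w}}\cap \bd L^2$, rather than only $\theta\in\bd L^2_{\sqrt{w}}$ as in the definition of the transform). There is no substantial obstacle here — the orthogonality relation already does all the work — and the only bookkeeping is the routine justification for interchanging the inner product with the weak integral, which is precisely the content of defining the right-hand side as a weakly convergent vector-valued integral.
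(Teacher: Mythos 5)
Your proof is correct and takes essentially the same route as the paper: the paper likewise deduces the corollary by pairing against an arbitrary $f_2\in\mathcal F^{-1}(\LtD)$ and applying the orthogonality relations of Theorem~\ref{thm:orthrel} with $\theta_1=\theta_2=\theta$. You are also right about the normalization: since $\langle\theta_2,\theta_1\rangle=\|\theta\|_{\bd L^2}^2$, the correct factor is $\|\theta\|_{\bd L^2}^{-2}$, and the $\|\theta\|_{\bd L^2}^{-1}$ appearing in the corollary as stated (and repeated in the paper's own proof) is a typo.
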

\begin{proof}
  The assertion follows easily from the orthogonality relations by setting $\theta = \theta_1=\theta_2$ since
  for any given $f_2 \in \mathcal F^{-1}(\LtD)$ we have the relation
  \begin{equation}
    \inner{f}{f_2} = \frac{1}{\|\theta\|_{\bd L^2}} \int_D \int_\RR \inner{f}{ g_{x,\xi}}\inner{g_{x,\xi}}{f_2} \; d\xi dx.
    \label{}
  \end{equation}
\end{proof}

To conclude this section we give some examples of warping functions that are of particular interest, as they encompass important frequency scales. For a proof that the presented examples indeed define warping functions in the sense of Definition \ref{def:warpfun}, please see~\cite[Proposition 2]{howi14}.

\begin{example}[Wavelets]\label{ex:wavelet1}
  Choosing $F=\log$, with $D=\RR^+$ leads to a system of the form
  \begin{equation}
    g_{x}(t) = x^{-1/2}\theta(\log(t)-\log(x)) = x^{-1/2}\theta(\log(t/x)) = x^{-1/2}g_{F^{-1}(0)}(t/x).
    \label{}
  \end{equation}
  This warping function therefore leads to $g_x$ being a dilated version of $g_{1}$. 
  Note the interaction of the Fourier transform and dilation to see that $\mathcal G(\theta, \log)$ is indeed a continuous wavelet system, with the minor modification that our scales are reciprocal to the usual definition of wavelets.
\end{example}

\begin{example}\label{ex:Rplusalpha1}
  The family of warping functions $F_l(t) = c\left((t/d)^l - (t/d)^{-l}\right)$, for some $c,d >0$ and $l \in ]0,1]$, is an alternative to the logarithmic warping for the domain $D=\RR^+$. The logarithmic warping in the previous example can be interpreted as the limit of this family for $l\rightarrow 0$ in the sense that for any fixed $t\in \RR^+$,
  \begin{equation}
    F_l'(t) = \frac{lc}{d}\left((t/d)^{-1+l} + (t/d)^{-1-l}\right) \overset{l\rightarrow 0}{\rightarrow} \frac{2lc}{t} = \frac{2lc}{d}\log'(t/d).
  \end{equation}

  This type of warping provides a frequency scale that approaches the limits $0,\infty$ of the frequency range $D$ in a slower fashion than the wavelet warping. In other words, $g_x$ is less deformed for $x>F_l^{-1}(0)$, but more deformed for $x<F_l^{-1}(0)$ than in the case $F=\log$. On the other hand, the property that $g_x$ can be expressed as dilation of $g_{F^{-1}_l(0)}$, or any other unitary operator applied to $g_{F^{-1}_l(0)}$, is lost.
\end{example}

\begin{example}[ERBlets]\label{ex:ERB1}
  In psychoacoustics the investigation of filter banks adapted to the spectral resolution of the human ear has been subject to a wealth of research, see \cite{momo03} for an overview.
  We mention here the Equivalent Rectangular Bandwidth scale (ERB-scale) described in \cite{glmo90}, which introduces a set of bandpass filters following the human perception.
  In \cite{bahoneso13} the authors construct a filter bank that is designed to be adapted to the ERB-scale. The ERB warping function, given by
  \begin{equation}
      F_{\text {ERB}}(t) = \sgn{(t)} \; c_1 \log\left(1+\frac{\abs{t}}{c_2}\right), \\
    \label{}
  \end{equation}
  can also be used to construct a continuous time-frequency representation on an auditory scale. The ERB-scale is obtained for $c_1=9.265$ and $c_2=228.8$.
  Being adapted to the human perception of sound, this representation has potential applications in sound signal processing.
\end{example}

\begin{example}\label{ex:alpha1}
  The warping function $F_l(t) = \sgn(t)  \,\left((\abs{t}+1)^l-1\right)$ for some $l \in ]0,1]$ leads to a transform that is structurally very similar to the \emph{$\alpha$-transform}
  . Much in the same way, this family of warping functions can be seen as an interpolation between the identity ($l=1$), which leads to the STFT, and an ERB-like frequency scale for $l\rightarrow 0$.
  This can be seen by differentiating $F$ and observing that for $l$ approaching $1$ this derivative approaches (up to a factor) the derivative of the ERB warping function for $c_1=c_2=1$. The connection between this type of warping and the $\alpha$-transform is detailed below.
  
  The $\alpha$-transform
  , provides a family of time-frequency transforms with varying time-frequency resolution. its time-frequency atoms are constructed from a single prototype by a combination of translation, modulation and dilation:
  \begin{equation}
    g_{x,\xi}(t) = \eta_\alpha(x)^{1/2} e^{2\pi i x t}g\left(\eta_\alpha(x) (t-\xi) \right),
  \end{equation}
  for $\eta_\alpha(x) = (1+|x|)^\alpha$, with $\alpha\in [0,1]$. If $\mathcal{F}g$ is a symmetric bump function centered at frequency $0$, with a bandwidth\footnote{The exact definition of bandwidth, e.g. frequency support or $-3$db bandwidth, is not important for this example.} of $1$, then $\mathcal{F}g_{x,0}$ is a symmetric bump function centered at frequency $x$, with a bandwidth of $\eta_\alpha(x)$. Up to a phase factor, $\mathcal{F}g_{x,\xi} = \bd M_{-\xi}\mathcal{F}g_{x,0}$. Varying $\alpha$, one can \emph{interpolate} between the STFT ($\alpha = 0$, constant time-frequency resolution) and a wavelet-like (or more precisely ERB-like) transform with the dilation depending linearly on the center frequency ($\alpha=1$).
  
  Through our construction, we can obtain a transform with similar properties by using the warping functions $F(t) = l^{-1}\sgn(t)\left((1+|t|)^{l}-1\right)$, for $l\in ]0,1]$, and $F(t) =\sgn(t)\log(1+|t|)$, introduced here and in Example \ref{ex:ERB1}. Take $\theta$ a symmetric bump function centered at frequency $0$, with a bandwidth of $1$. Then $\mathcal{F}g_{x,0} = \sqrt{F'(x)}\theta(F(t)-F(x))$ is still a bump function with peak frequency $x$, but only symmetric if $l=1$ or $x=0$. Moreover, the bandwidth of $\mathcal{F}g_{x,0}$ equals 
  \begin{equation}
    F^{-1}(F(x)+1/2) - F^{-1}(F(x)-1/2) = \int_{-1/2}^{1/2} (F^{-1})'(F(x)+s)\; ds \approx 1/F'(x).
  \end{equation}
  Note that $F'(x) = (1+|x|)^{l-1} = 1/\eta_{1-l}(x)$, for $l\in ]0,1]$, and for $F(t) =\sgn(t)\log(1+|t|)$ we obtain $F'(x) = (1+|x|)^{-1} = 1/\eta_1(x)$. Finally, $\mathcal{F}g_{x,\xi} = \bd M_{-\xi}\mathcal{F}g_{x,0}$. All in all, it can be expected that the obtained warped transforms provide a time-frequency representation very similar to the $\alpha$-transform with the corresponding choice of $\alpha$.
\end{example}

\section{Coorbit spaces for warped time-frequency systems}\label{sec:warpedcoorbits}

   In the previous section we have developed time-frequency representations for functions $f\in \mathcal F^{-1}\bd L^2\left( D \right)$.
   Due to the inner product structure of the coefficient computation it seems natural to attempt the representation of distributions $f$ by restricting the pool of possible functions $\theta$, so that the resulting warped filter bank consists entirely of suitable test functions. In the setting of classical Gabor and wavelet transforms, the appropriate setting is Feichtinger and Gr\"ochenig's coorbit space theory~\cite{fegr89,fegr89-1}.
  
  In addition to a Banach space of test functions and the appropriate distribution space, coorbit theory provides a complete family of (nested) Banach spaces, the elements of which are characterized by their decay properties in the associated time-frequency representation. However, most attempts to generalize coorbit space theory still require the examined TF representation to be based on an underlying group structure, similar to the STFT being based on the (reduced) Heisenberg group in the classical theory. 
  
  Since our warped TF transform $V_{\theta,F}$ does not possess such a structure, the appropriate framework for the construction of the associated coorbit spaces is the generalized coorbit theory by Fornasier and Rauhut~\cite{fora05}. In other words, we aim to translate the results presented in Section \ref{ssec:coorbit} to the setting of warped time-frequency systems. The first step towards this is finding sufficient conditions for a prototype function $\theta$, such that $\mathcal G(\theta,F)$ satisfies $K_{\theta,F}:= K_{\mathcal G(\theta,F)}\in \AAm$, for suitable weights $m$. A large part of this section is devoted to proving the following main result.
   
   \begin{theorem}\label{thm:mainres1}
   Let $F~:~D\mapsto \RR$ be a warping function with $w=(F^{-1})'\in\mathcal{C}^1(\RR)$, such that for all $x,y\in\RR$:
   \begin{equation}\label{eq:warpfunconds}
      \frac{w(x+y)}{w(x)w(y)} \leq C<\infty \text{ and } \left|\frac{w'}{w}\right|(x)\leq D_1<\infty.
   \end{equation}
   Furthermore, let $m_1:D\rightarrow \RR$ such that $m_1\circ F^{-1}$ is $v_1$-moderate, for a symmetric, submultiplicative weight function $v_1$ and define $m(x,y,\xi,\omega) = \max\left\{\frac{m_1(x)}{m_1(y)},\frac{m_1(y)}{m_1(x)}\right\}$. Then
   \begin{equation}
     K_{\theta,F}\in \AAm,\quad \text{ for all } \theta\in\mathcal C^\infty_c.
   \end{equation}
   If furthermore $w,v_1\in \mathcal O\left((1+|\cdot|)^p\right)$ for some $p\in\RR^+$, then
   \begin{equation}
     K_{\theta,F}\in \AAm,\quad \text{ for all } \theta\in \mathcal S.
   \end{equation} 
   \end{theorem}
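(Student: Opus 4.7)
The plan is to derive a tractable integral formula for the kernel $K_{\theta,F}$, prove pointwise bounds exhibiting decay in both the frequency-shift $s:=F(y)-F(x)$ and the time-shift $\tau:=\omega-\xi$, and then verify the uniform integrability required by \eqref{eq:normA1}.

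First, I will compute the kernel explicitly. By Parseval's formula, $\langle g_{y,\omega},g_{x,\xi}\rangle=\langle\theta_{F,y},\bd M_{\omega-\xi}\theta_{F,x}\rangle$, and the substitution $u=F(t)$ (with Jacobian $w(u)=(F^{-1})'(u)$) produces
\[K_{\theta,F}((x,\xi),(y,\omega))=\tfrac{\sqrt{F'(x)F'(y)}}{\|\theta\|_{\bd L^2}^2}\int_\RR\theta(u-F(y))\overline{\theta(u-F(x))}\,e^{-2\pi i\tau F^{-1}(u)}w(u)\,du.\]

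Second, I will estimate this integral. Decay in $s$ is essentially free: for $\theta\in \mathcal{C}_c^\infty$ the integrand vanishes for $|s|>\diam(\supp\theta)$, while for $\theta\in\mathcal S$ a Peetre-type argument combined with the polynomial growth of $w$ yields arbitrary polynomial decay in $s$. Decay in $\tau$ is obtained by iterated integration by parts via $e^{-2\pi i\tau F^{-1}(u)}=\tfrac{-1}{2\pi i\tau w(u)}\tfrac{d}{du}e^{-2\pi i\tau F^{-1}(u)}$. The first IBP cancels the Jacobian $w(u)$ entirely; every subsequent IBP produces a factor $1/\tau$ together with a derivative which, using $(1/w)'=-(w'/w)/w$ and the hypothesis $|w'/w|\le D_1$, is dominated by $D_1/w$ times derivatives of $\theta$. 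After $N$ steps one obtains a bound of the form
\[|K_{\theta,F}((x,\xi),(y,\omega))|\le C_N\sqrt{F'(x)F'(y)}\,(1+|\tau|)^{-N}\,\Phi_N(s,F(x)),\]
where $\Phi_N(s,\cdot)$ collects the residual $u$-integral, whose integrand is controlled by $w^{-(N-1)}$ times sums of products of derivatives of $\theta$, and has either compact $s$-support or rapid $s$-decay.

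Third, I will translate this into the $\AAm$-bound. Since $m_1\circ F^{-1}$ is $v_1$-moderate with $v_1$ symmetric, one has $m((x,\xi),(y,\omega))\le C'\,v_1(s)$, independent of $\xi,\omega$. Changing variables $(y,\omega)\mapsto(s,\tau)$ in the first integral of \eqref{eq:normA1} gives $dy=w(F(x)+s)\,ds$, and the identity $F'(y)w(F(y))=1$ yields the key algebraic simplification $\sqrt{F'(x)F'(y)}\,w(F(x)+s)=\sqrt{w(F(x)+s)/w(F(x))}\le\sqrt{C\,w(s)}$ by the submultiplicativity-type bound in \eqref{eq:warpfunconds}. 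This removes the $x$-dependence from the volume factors. Picking $N\ge2$ renders $(1+|\tau|)^{-N}$ integrable, and the remaining $s$-integral $\int v_1(s)\sqrt{w(s)}\,\Phi_N(s,F(x))\,ds$ is then finite uniformly in $x$: compactly in the $\mathcal{C}_c^\infty$ case, and via polynomial domination in the Schwartz case. The symmetric bound obtained by swapping $(x,\xi)$ and $(y,\omega)$ follows identically, since the kernel is conjugate-symmetric in its arguments.

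The main technical obstacle lies in estimating $\Phi_N$ in the Schwartz case: one must control the $u$-integral of $\theta^{(k)}(u-F(x))\theta^{(N-k)}(u-F(y))/w(u)^{N-1}$ uniformly in $F(x),F(y)$. The difficulty is that $1/w^{N-1}$ is generically unbounded on $D$ (e.g.\ $1/w(u)=e^{-u}$ blows up as $u\to-\infty$ for $F=\log$). This is resolved by exploiting the polynomial growth $w\in\mathcal O((1+|\cdot|)^p)$ and choosing $N$ large enough that the rapid decay of $\theta$ dominates both the support drift in $u$ and the polynomial growth of $w$. No such issue arises in the compactly supported case, where the $u$-support of $\theta(\cdot-F(x))\theta(\cdot-F(y))$ is a set of uniformly bounded diameter on which $\Phi_N$ is trivially controlled.
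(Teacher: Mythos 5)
Your overall strategy coincides with the paper's: write the kernel via Parseval and the substitution $u=F(t)$, gain decay in $\tau=\omega-\xi$ by iterated integration by parts against the phase $e^{-2\pi i\tau F^{-1}(u)}$ using $|w'/w|\le D_1$, and absorb the volume factors through $\sqrt{F'(x)F'(y)}\,w(F(x)+s)=\sqrt{w(F(x)+s)/w(F(x))}\le\sqrt{Cw(s)}$ before a Schur-type estimate. However, there is a genuine gap precisely at the point you flag as ``the main technical obstacle.'' After $N$ integrations by parts your residual factor $\Phi_N(s,F(x))$ contains $w(u)^{-(N-1)}$ with $u$ ranging over a neighbourhood of $F(x)$ of bounded \emph{diameter} but unbounded \emph{location}. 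For $D=\RR^+$ the weight $w=(F^{-1})'$ necessarily tends to $0$ as $u\to-\infty$ (since $\int_{-\infty}^0 w=F^{-1}(0)<\infty$; e.g.\ $w(u)=e^u$ for $F=\log$), so $\sup_{|u-F(x)|\le R}w(u)^{-(N-1)}\to\infty$ as $F(x)\to-\infty$. Hence $\Phi_N(s,F(x))$ is \emph{not} uniformly bounded in $x$, your claim that it is ``trivially controlled'' in the compactly supported case is false, and your proposed remedy in the Schwartz case does not apply: $w\in\mathcal O((1+|\cdot|)^p)$ bounds $w$ from above, not $1/w$ from below, and increasing $N$ makes the factor $w^{-(N-1)}$ worse, not better. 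Since $D=\RR^+$ covers the wavelet case, this is not a peripheral configuration.

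The missing ingredient is an $x$-dependent rescaling of the frequency variable, which is exactly how the paper's proof (via Theorem \ref{thm:thetainAm}) handles this. Substituting $\eta=w(F(x))\,\tau$ (equivalently, splitting the $\tau$-integral at the $x$-dependent threshold $|\tau|\sim 1/w(F(x))$, using the trivial bound below it and the integration-by-parts bound above it) produces per step the ratio $w(F(x))/w(u)\le Cw(F(x)-u)=Cw(-s')$, a function of the \emph{relative} variable only; this is built into the paper's operators $\mathbf{D}_{w,x,\eta}$ and Corollary \ref{cor:Dnestimate}. The leftover Jacobian $1/w(F(x))$ from $d\omega=d\eta/w(F(x))$ is then absorbed by the Jacobians $w(s+F(x))/w(F(x))\le Cw(s)$ of the $t$- and $y$-substitutions. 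After this normalization one still has to verify that $w(-s')^{N}$ times products of derivatives of $\theta$ is integrable against $\sqrt{w(z)}\,v_1(z)$ over the remaining variables (the paper's conditions (a)--(c) and Lemma \ref{lem:cznconvergence}); it is \emph{there} that $\theta\in\mathcal C^\infty_c$ suffices unconditionally and $\theta\in\mathcal S$ requires the polynomial bounds on $w$ and $v_1$ — that is where your polynomial-growth hypothesis actually enters, not in controlling $1/w$. With this correction your argument closes, but as written the uniform-in-$x$ bound on $\Phi_N$ fails and the proof does not go through for $D=\RR^+$.
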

   
   In fact, we prove a stronger result providing weaker, but more technical conditions on $F$, $m$ and $\theta$. As indicated by \eqref{eq:warpfunconds}, we will not be able to construct coorbit spaces for arbitrary warping functions. Indeed, we require $w=(F^{-1})'$ to be \emph{almost submultiplicative}, i.e. 
   \begin{equation}\label{eq:quasisubmult}
     w(x+y) \leq Cw(x)w(y), \text{ for all } x,y\in\RR
   \end{equation}
   and a suitable $C>0$. For the remainder of this manuscript, we will assume Eq. \eqref{eq:quasisubmult} to hold, without loss of generality we also assume $C\geq 1$.
   
   \begin{remark}
     Since we require the warping functions to satisfy Eq. \eqref{eq:quasisubmult}, the following results do not hold for the warping functions $F_l(x) = x^l-x^{-l}$, $l\in [0,1[$, see Example \ref{ex:Rplusalpha1}. It might still be possible to construct coorbit spaces for that type of warping function, albeit not with the methods presented here. 
   \end{remark}
   
   \begin{remark}
     The warping functions $F(t) = \sgn(t)\log(1+|t|)$ and $F(t) = \sgn(t)\left((1+|t|)^l-1\right)$, $l\in ]0,1]$, see Examples \ref{ex:ERB1} and \ref{ex:alpha1}, are $\mathcal C^\infty$ only on $\RR\setminus\{0\}$. For smoothness of $F^{-1}$ at $t=0$ select a function $G\in\mathcal{C}^\infty_c$ with $\supp(G) = ]-\epsilon,\epsilon[$ and construct a smooth transition $\tilde{F}(t)=(G(0)-G(t))F(t)+G(t)F(\epsilon)t/\epsilon$ between $F$ and the identity. Then $\tilde F^{-1}(s)\in\mathcal C^{\infty}(\RR)$ and $\tilde F = F$ on $\RR\setminus ]-\epsilon,\epsilon[$.
   \end{remark}

   We begin by noting that the norm condition $\|K_{\theta,F}\|_{\AAm} < \infty$ reduces to 
   \begin{equation}
     \esssup_{x,\xi\in\RR} I_{\theta,F,m}(x,\xi) < \infty,
   \end{equation}
   where 
   \begin{equation}\label{eq:amclass}
     I_{\theta,F,m}(x,\xi) := \int_\RR \int_\RR \left| \int_\RR C_x(z) \tilde{m}(x,z,\xi,\eta)\frac{w(s+x)}{w(x)} \theta(s)\overline{\bd T_{z}\theta}(s)e^{-2\pi i \eta\frac{F^{-1}(s+x)}{w(x)}} ~ds\right|~d\eta~dz,
   \end{equation}
   and $C_x(z) = \sqrt{\frac{w(z+x)}{w(x)}}\leq \sqrt{Cw(z)}$ and $\tilde{m}(x,z,\xi,\eta) = m\left(F^{-1}(x),F^{-1}(z+x),\xi,\xi-\frac{\eta}{w(x)}\right)$.
   
   The expression describing $I_{\theta,F,m}(x,\xi)$ is obtained by (i) inserting the definition of $K_{\theta,F}$ and $g_{x,\xi}$ while substituting $x \rightarrow F^{-1} (x)$, and (ii) performing a triple substitution:
   \begin{equation}
     \begin{split}
       \lefteqn{\int_D \int_\RR m(F^{-1}(x),y,\xi,\omega) |\langle g_{F^{-1}(x),\xi},g_{y,\omega}\rangle|\; d\omega\; dy = \int_D \int_\RR m(F^{-1}(x),y,\xi,\omega) |\langle \widehat{g_{F^{-1}(x),\xi}},\widehat{g_{y,\omega}}\rangle|\; d\omega\; dy}\\
       & = \int_D \int_\RR m(F^{-1}(x),y,\xi,\omega) \left|\int_D \sqrt{F'(F^{-1}(x))F'(y)}\theta(F(t)-x)\overline{\theta(F(t)-F(y))}e^{-2\pi i (\xi-\omega)t}~dt\right|~d\omega~dy\\
       &\overset{(1)}{=} \int_\RR C_x(z) \int_\RR m(F^{-1}(x),F^{-1}(z+x),\xi,\omega) \left| \int_\RR w(s+x) \theta(s)\overline{\bd T_{z}\theta}(s)e^{-2\pi i (\xi-\omega) F^{-1}(s+x)} ~ds\right|~d\omega~dz\\
       &\overset{(2)}{=} I_{\theta,F,m}(x,\xi),
  \end{split}
\end{equation}
   where $(1)$ follows by the substitutions $s = F(t)-x$ and $z = F(y)-x$, whereas $(2)$ follows by the substitution $\eta = w(x)(\xi-\omega)$. In order to obtain an estimate $I_{\theta,F,m} < \tilde{C} < \infty$, we derive an estimate for the innermost integral, such that the outer integrals converge. The most important tool for that purpose is the so-called method of stationary phase~\cite{st93}, a particular case of partial integration most widely known for being the classical method of proving $f\in\mathcal C^p_0 \Rightarrow \hat{f}\in \mathcal O((1+|\cdot|)^{-p})$, for $p\in\NN$. In our case, it can be formulated as 
   \begin{equation} \label{sec:statphas1}
     \int_\RR f(s) e^{-2\pi i \eta \frac{F^{-1}(s+x)}{w(x)}}~ds = \int_\RR \mathbf{D}_{w,x,\eta}(f)(s) e^{-2\pi i \eta \frac{F^{-1}(s+x)}{w(x)}} ~ds,
   \end{equation}
   for all $f\in\mathcal C^1$, with $wf'\in\mathcal C_0$. 
   Here, we use the operators $\mathbf{D}_{w,x,\eta}~:~\mathcal{C}^1(\RR)\mapsto \mathcal{C}(\RR)$ defined 
   by
   \begin{equation}
    \mathbf{D}_{w,x,\eta}(f)(s) = \frac{w(x)}{2\pi i \eta} \left(\frac{f}{\bd T_{-x}w}\right)'(s).
   \end{equation}
       
We need to prove some auxiliary results for those operators:

   \begin{lemma}\label{lem:Dnoperator}
  Let $w\in\mathcal{C}^{n}(\RR)$, for some $n\in\NN_0$. 
  Then, $\mathbf{D}^n_{w,x,\eta}~:~\mathcal{C}^n(\RR)\mapsto \mathcal{C}(\RR)$ is given by
  \begin{equation}\label{eq:Dnoperator}
    \mathbf{D}^n_{w,x,\eta}(f)(s) = \frac{w(x)^n}{(2\pi i \eta)^n} \bd T_{-x}w(s)^{-2n} \sum_{k=0}^n f^{(k)}(s)\bd P_{n,k}(s+x), \text{ for all } f\in\mathcal{C}^n(\RR),\ s\in\RR,
  \end{equation}
  where
  \begin{equation}\label{eq:Pnkdefine}
    \bd P_{n,k}(s) = \sum_{\sigma\in\Sigma_{n,k}} C_\sigma \prod_{j=1}^n w^{(\sigma_j)}(s),
  \end{equation}
  with $\Sigma_{n,k} \subseteq \left\{ \sigma = (\sigma_1,\ldots,\sigma_n)~:~ \sigma_j\in\{0,\ldots,n-k\}\right\}$ and $C_\sigma \in \RR$ for all $\sigma\in\Sigma_{n,k}$.
\end{lemma}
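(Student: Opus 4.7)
The natural approach is induction on $n\in\NN_0$. The base case $n=0$ is immediate by taking $\Sigma_{0,0}$ to consist of the empty tuple with coefficient $C=1$ (so that $\bd P_{0,0}\equiv 1$), which gives back $f$ as required.

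For the inductive step, I would assume the formula \eqref{eq:Dnoperator} holds for $n$ and compute $\mathbf{D}^{n+1}_{w,x,\eta}(f)=\mathbf{D}_{w,x,\eta}\bigl(\mathbf{D}^n_{w,x,\eta}(f)\bigr)$ directly from the definition of $\mathbf{D}_{w,x,\eta}$. Substituting the hypothesis, pulling out the scalar prefactor $w(x)^{n+1}/(2\pi i\eta)^{n+1}$, and applying the Leibniz rule to
\begin{equation}
\left(\bd T_{-x}w(s)^{-(2n+1)} \sum_{k=0}^n f^{(k)}(s)\bd P_{n,k}(s+x)\right)'
\end{equation}
produces three types of terms per summand: a term proportional to $w(s+x)f^{(k+1)}(s)\bd P_{n,k}(s+x)$, a term $w(s+x)f^{(k)}(s)\bd P_{n,k}'(s+x)$, and a term $-(2n+1)w'(s+x)f^{(k)}(s)\bd P_{n,k}(s+x)$, all multiplied by the common factor $\bd T_{-x}w(s)^{-2(n+1)}$.

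After re-indexing the first family by $l=k+1$ and keeping the other two at $l=k$, these contributions can be grouped as $f^{(l)}(s)\bd P_{n+1,l}(s+x)$ with $l$ ranging over $\{0,\dots,n+1\}$. The key point is then verifying that each resulting $\bd P_{n+1,l}$ is again a finite linear combination of products of exactly $n+1$ derivatives of $w$ whose orders lie in $\{0,\dots,n+1-l\}$: the first family multiplies $\bd P_{n,k}$ (orders in $\{0,\dots,n-k\}$, contributing to $l=k+1$) by $w$, the second differentiates one factor (raising an order by $1$, still $\leq n+1-k$) and multiplies by $w$, and the third multiplies by $w'$ (order $1 \leq n+1-k$). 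In every case the bound $n+1-l$ is respected, so the new index sets $\Sigma_{n+1,l}$ and real coefficients $C_\sigma$ are well-defined.

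I expect the only delicate bookkeeping to be the verification of the order bound $\sigma_j \leq n-k$ at each inductive step, since this is the structural content that drives all later estimates; the rest is a mechanical application of the product rule. No analytic hypotheses beyond $w\in\mathcal C^n(\RR)$ and $f\in\mathcal C^n(\RR)$ are used, so the induction closes cleanly.
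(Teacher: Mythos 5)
Your proposal is correct and follows essentially the same route as the paper: induction on $n$, writing $\mathbf{D}^{n+1}_{w,x,\eta}=\mathbf{D}_{w,x,\eta}\circ\mathbf{D}^{n}_{w,x,\eta}$, applying the quotient/Leibniz rule to produce the three families of terms, and checking that each resulting product of $n+1$ factors respects the order bound $\sigma_j\le n+1-l$. The only cosmetic differences are that the paper anchors the induction at $n=1$ rather than at $n=0$ with the empty-product convention, and shifts the induction index from $n-1$ to $n$ instead of $n$ to $n+1$.
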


\begin{proof}
  The assertion is proven by induction. By definition, 
  \begin{equation}
    \mathbf{D}_{w,x,\eta}(f)(s) = \frac{w(x)}{2\pi i \eta} \bd T_{-x}w(s)^{-2} \left(-f(s)w'(s+x)+f'(s)\bd T_{-x}w(s)\right).
  \end{equation}
  Therefore, the assertion holds for $n=1$. For the induction step, note that
  \begin{equation}
      \mathbf{D}^{n}_{w,x,\eta}(f)(s) = \mathbf{D}_{w,x,\eta}(\mathbf{D}^{n-1}_{w,x,\eta}(f))(s) = \frac{w(x)^{n}}{(2\pi i \eta)^{n}}\underbrace{\left(\frac{\sum_{k=0}^{n-1} f^{(k)}\bd T_{-x}\bd P_{n-1,k}}{\bd T_{-x}w^{2n-1}}\right)'(s)}_{=:G_{w,x}(s)}.
  \end{equation}
  By the quotient rule, 
  \begin{equation}
    \begin{split}
    G_{w,x}(s) & = \bd T_{-x}w(s)^{-2n}\sum_{k=0}^{n-1} \bigg(f^{(k+1)}(s)\bd P_{n-1,k}(s+x)\bd T_{-x}w(s)\\
    & + f^{(k)}(s)\bd P_{n-1,k}'(s+x)\bd T_{-x}w(s) - (2n-1)f^{(k)}(s)\bd P_{n-1,k}(s+x)w'(s+x)\bigg).
    \end{split}
  \end{equation}
  Using the definition of $\bd P_{n-1,k}$ it is easy to see that $\bd P_{n-1,k}(s+x)\bd T_{-x}w(s)$ and $\bd P_{n-1,k}(s+x)w'(s+x)$ are sums of $n$-term products of $w$ and its derivatives of order no higher than $n-k-1$ and $n-k$, respectively. Furthermore, the highest order derivative of $f$ appearing in $G_{w,x}$ is $f^{(n)}$. It remains to show that $\bd P_{n-1,k}'$ is a sum of $(n-1)$-term products of $w$ and its derivatives of order no higher than $n-k$. For any term of $\bd P_{n-1,k}$, i.e. any $\sigma\in\Sigma_{n-1,k}$, we obtain 
  \begin{equation}
    \left(\prod_{j=1}^{n-1} w^{(\sigma_j)}\right)'(s) = \sum_{j=1}^{n-1} w^{(\sigma_j+1)}(s)\prod_{l\in\{1,\ldots,n-1\}\setminus\{j\}}w^{(\sigma_l)}(s).
  \end{equation}
  Therefore, the idividual terms of $G_{w,x}$ satisfy the conditions imposed on the terms of $\bd P_{n,k}$ and reordering them by the appearing derivative of $f$ completes the proof.
\end{proof}

The following corollary shows that $\mathbf{D}^{n}_{w,x,\eta}(f)(s)$ is uniformly bounded as a function in $x\in\RR$, under suitable assumptions on the function $w$.

\begin{corollary}\label{cor:Dnestimate}
  Let $w$ satisfy Eq. \eqref{eq:quasisubmult} and $w\in\mathcal{C}^n(\RR)$. If there are constants $D_k > 0$, $k=0,\ldots,n$, such that $|w^{(k)}/w|(s)\leq D_k$, then there is a finite constant
  $C_n>0$, such that 
  \begin{equation}\label{eq:cor1assert}
    |\mathbf{D}^{n}_{w,x,\eta}(f)(s)|\leq C_n \left(\frac{w(-s)}{2\pi |\eta|}\right)^n \sum_{k=0}^n |f^{(k)}(s)|, \text{ for all } s,x,\eta\in\RR \text{ and all } f\in\mathcal C^n(\RR).
  \end{equation}
\end{corollary}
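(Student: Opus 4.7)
The plan is to apply the explicit formula from Lemma \ref{lem:Dnoperator} and reduce the bound to two independent estimates: first controlling the polynomial factors $\bd P_{n,k}$ by powers of $w$, and second controlling the ratio $w(x)/w(s+x)$ by $w(-s)$ via almost submultiplicativity.

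First I would start from the representation
\begin{equation*}
\mathbf{D}^n_{w,x,\eta}(f)(s) = \frac{w(x)^n}{(2\pi i \eta)^n} \bd T_{-x}w(s)^{-2n} \sum_{k=0}^n f^{(k)}(s)\bd P_{n,k}(s+x),
\end{equation*}
and apply the triangle inequality to reduce the problem to bounding the expression $w(x)^n w(s+x)^{-2n} |\bd P_{n,k}(s+x)|$ for each $k$. Since $\bd P_{n,k}(s+x)$ is, by \eqref{eq:Pnkdefine}, a finite sum (indexed by $\sigma\in\Sigma_{n,k}$) of $n$-term products $\prod_{j=1}^n w^{(\sigma_j)}(s+x)$, the hypothesis $|w^{(\sigma_j)}/w|\leq D_{\sigma_j}$ yields
\begin{equation*}
\left|\prod_{j=1}^n w^{(\sigma_j)}(s+x)\right| \leq \left(\prod_{j=1}^n D_{\sigma_j}\right) w(s+x)^n.
\end{equation*}
Summing over the finite index set $\Sigma_{n,k}$ produces a constant $\tilde C_{n,k}<\infty$ (independent of $s,x$) with $|\bd P_{n,k}(s+x)|\leq \tilde C_{n,k} w(s+x)^n$.

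Next, the almost submultiplicativity assumption \eqref{eq:quasisubmult} applied to $x = (s+x) + (-s)$ gives $w(x)\leq C\, w(s+x)\, w(-s)$, hence
\begin{equation*}
\frac{w(x)^n |\bd P_{n,k}(s+x)|}{w(s+x)^{2n}} \leq \tilde C_{n,k}\, C^n \frac{w(s+x)^n w(-s)^n}{w(s+x)^n} = \tilde C_{n,k}\, C^n\, w(-s)^n.
\end{equation*}
The key observation is that the $w(s+x)^{2n}$ in the denominator exactly absorbs both the polynomial growth of $\bd P_{n,k}$ and the numerator $w(s+x)^n$ coming from submultiplicativity, leaving only $w(-s)^n$ (independent of $x$).

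Combining these estimates and setting $C_n := \max_{0\leq k\leq n} \tilde C_{n,k}\, C^n \cdot (n+1)$, or more simply $C_n := C^n \sum_{k=0}^n \tilde C_{n,k}$, we obtain
\begin{equation*}
|\mathbf{D}^{n}_{w,x,\eta}(f)(s)| \leq \frac{1}{(2\pi|\eta|)^n}\sum_{k=0}^n |f^{(k)}(s)|\, \tilde C_{n,k}\, C^n\, w(-s)^n \leq C_n \left(\frac{w(-s)}{2\pi|\eta|}\right)^n \sum_{k=0}^n |f^{(k)}(s)|,
\end{equation*}
which is precisely \eqref{eq:cor1assert}. There is no real obstacle here, as the proof is essentially a bookkeeping argument; the only subtle point is recognizing that the factor $\bd T_{-x}w(s)^{-2n} = w(s+x)^{-2n}$ in Lemma \ref{lem:Dnoperator} provides exactly the right amount of decay in $s+x$ to neutralize both $|\bd P_{n,k}(s+x)|$ and the numerator $w(s+x)^n$ from the submultiplicative estimate on $w(x)$, so that the final bound depends on $x$ only implicitly via the shift argument and the $w(-s)^n$ factor remains independent of $x$.
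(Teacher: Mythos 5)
Your proof is correct and follows essentially the same route as the paper: invoke Lemma \ref{lem:Dnoperator}, bound $|\bd P_{n,k}(s+x)|$ by a constant times $w(s+x)^n$ using the hypotheses $|w^{(\sigma_j)}/w|\leq D_{\sigma_j}$, and absorb $w(x)^n/w(s+x)^n$ into $C^n w(-s)^n$ via Eq.~\eqref{eq:quasisubmult}. If anything, you are slightly more careful than the paper, whose stated constant $C_n=\max_k\sup_s|\bd P_{n,k}(s)/w(s)^n|$ omits the factor $C^n$ arising from the almost-submultiplicative estimate, which your bookkeeping correctly includes.
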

\begin{proof}
  First, note that Eq. \eqref{eq:quasisubmult} ensures $w(x)/w(s+x) \leq Cw(-s)$. Invoke Lemma \ref{lem:Dnoperator} to see that 
  \begin{equation}\label{eq:Pquotmax}
    C_n:= \max\limits_{k=0}^n \sup\limits_{s\in\RR} \left|\bd P_{n,k}(s)/w(s)^n\right|
  \end{equation}
  is a viable choice for Eq. \eqref{eq:cor1assert} to hold, provided it is finite. Use Lemma \ref{lem:Dnoperator} again to obtain
  \begin{equation}
    \left|\frac{\bd P_{n,k}(s)}{w(s)^n}\right| = \sum_{\sigma\in\Sigma_{n,k}} |C_\sigma| \prod_{j=1}^n \left|\frac{w^{(\sigma_j)}(s)}{w(s)}\right| \leq \sum_{\sigma\in\Sigma_{n,k}} |C_\sigma| \prod_{j=1}^n D_{\sigma_j+1} < \infty.
  \end{equation}
  Since the sets $\{\bd P_{n,k}\}$ and $\Sigma_{n,k}$ are finite, the expression in Eq. \eqref{eq:Pquotmax} is finite and there is a finite $C_n>0$.
\end{proof}

\begin{lemma}\label{lem:cnexist}
  Let $F$ be a warping function such that $w=(F^{-1})'\in \mathcal{C}^n(\RR)$ satisfies Eq. \eqref{eq:quasisubmult} and $|w^{(k)}/w|\leq D_k$ for some constants $D_k > 0$, $k=0,\ldots,n$. If $f\in \mathcal C^{n+1}(\RR)\cap \bd L^1_{w}(\RR)$, with 
  \begin{equation}\label{eq:cjlims}
    f,\ w(-\cdot)^j f^{(k+1)}\in\mathcal C_0(\RR), \text{ for all } 0\leq k\leq j,\ 0\leq j\leq n
  \end{equation}
  and, with $C_n$ as in Corollary \ref{cor:Dnestimate},
  \begin{equation}\label{eq:cjs}
     C_n\int_\RR w(-s)^n|f^{(k+1)}(s)|~ds \leq c_n < \infty , \text{ for all } 0\leq k\leq n.
  \end{equation}
  Then 
  \begin{equation}\label{eq:cnestimate}
    \left|\int_\RR \frac{w(s+x)}{w(x)}f(s) e^{-2\pi i \eta \frac{F^{-1}(s+x)}{w(x)}}~ds\right| \leq \frac{(n+1)c_n}{(2\pi|\eta|)^{n+1}}.
  \end{equation}
  Furthermore, the LHS of Eq. \eqref{eq:cnestimate} is bounded by $C\|f\|_{\bd L^1_{w}(\RR)}$.
\end{lemma}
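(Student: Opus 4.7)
The plan is to exploit the stationary phase identity \eqref{sec:statphas1} twice: first to absorb the factor $\tfrac{w(s+x)}{w(x)}$ in front of $f(s)$, and then to iterate $n$ more times using the higher-order operator $\mathbf{D}^n_{w,x,\eta}$ from Lemma \ref{lem:Dnoperator}. The crucial observation is that the prefactor inside the integral is exactly $\bd T_{-x}w(s)/w(x)$, which cancels beautifully with the denominator appearing in $\mathbf{D}_{w,x,\eta}$.

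First, apply \eqref{sec:statphas1} to the full integrand $F_0(s):=f(s)\,\tfrac{\bd T_{-x}w(s)}{w(x)}$. Since
\begin{equation*}
\frac{F_0}{\bd T_{-x}w}(s)=\frac{f(s)}{w(x)},
\end{equation*}
the defining formula for $\mathbf{D}_{w,x,\eta}$ yields $\mathbf{D}_{w,x,\eta}(F_0)(s)=\tfrac{1}{2\pi i\eta}\,f'(s)$, and hence
\begin{equation*}
I(x,\eta):=\int_\RR \tfrac{w(s+x)}{w(x)}f(s)\,e^{-2\pi i\eta\tfrac{F^{-1}(s+x)}{w(x)}}\,ds \;=\;\frac{1}{2\pi i\eta}\int_\RR f'(s)\,e^{-2\pi i\eta\tfrac{F^{-1}(s+x)}{w(x)}}\,ds.
\end{equation*}
Second, iterate \eqref{sec:statphas1} $n$ more times, replacing $f'$ by $\mathbf{D}^n_{w,x,\eta}(f')$. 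The decay hypotheses \eqref{eq:cjlims} are tailored precisely to kill the boundary terms in these successive integrations by parts: by Lemma \ref{lem:Dnoperator}, every such boundary term involves $f^{(k+1)}$ with $0\le k\le j$ multiplied by polynomial combinations of $w^{(\sigma)}/w$ (bounded by the hypothesis $|w^{(k)}/w|\le D_k$) divided by $\bd T_{-x}w^{\,2j-1}$, so the terms vanish at $\pm\infty$ exactly when $w(-\cdot)^j f^{(k+1)}\in\mathcal C_0(\RR)$ for all relevant $(j,k)$.

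Third, apply the pointwise bound of Corollary \ref{cor:Dnestimate} to $\mathbf{D}^n_{w,x,\eta}(f')$ and integrate:
\begin{equation*}
|I(x,\eta)|\;\le\;\frac{1}{2\pi|\eta|}\int_\RR C_n\Bigl(\tfrac{w(-s)}{2\pi|\eta|}\Bigr)^{\!n}\sum_{k=0}^n |f^{(k+1)}(s)|\,ds\;\le\;\frac{(n+1)c_n}{(2\pi|\eta|)^{n+1}},
\end{equation*}
where the last step uses \eqref{eq:cjs} to estimate each of the $n+1$ integrals $C_n\int w(-s)^n|f^{(k+1)}(s)|\,ds$ by $c_n$. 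The supplementary $\bd L^1_w$ bound follows from modulus one of the exponential together with the almost submultiplicativity \eqref{eq:quasisubmult}: $\tfrac{w(s+x)}{w(x)}\le C\,w(s)$, whence $|I(x,\eta)|\le C\int_\RR w(s)|f(s)|\,ds=C\|f\|_{\bd L^1_w(\RR)}$.

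The only genuinely delicate point is the second step, namely verifying that \eqref{eq:cjlims} really matches the list of boundary terms generated by the cascade of integrations by parts. This is a bookkeeping exercise parallel to the induction in Lemma \ref{lem:Dnoperator}: after $j$ applications, the terms that must vanish at $\pm\infty$ are precisely of the form $f^{(k+1)}(s)\,\bd P_{j,k}(s+x)/\bd T_{-x}w(s)^{\,2j-1}$ with $0\le k\le j$, whose decay is controlled by $w(-\cdot)^j f^{(k+1)}\in\mathcal C_0(\RR)$ and the uniform bounds on $w^{(\ell)}/w$. Once this is spelled out, the proof follows the outline above without further obstacle.
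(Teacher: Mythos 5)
Your proposal is correct and follows essentially the same route as the paper: the identity $\mathbf{D}_{w,x,\eta}\bigl(\tfrac{w(\cdot+x)}{w(x)}f\bigr)=(2\pi i\eta)^{-1}f'$, iterated stationary phase to reach $\mathbf{D}^n_{w,x,\eta}(f')$, the pointwise bound of Corollary \ref{cor:Dnestimate} combined with Eq. \eqref{eq:cjs}, and the almost-submultiplicativity argument for the $\bd L^1_w$ bound. The bookkeeping of boundary terms via Eq. \eqref{eq:cjlims} that you flag as the delicate point is exactly the role that hypothesis plays in the paper's (more terse) proof.
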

\begin{proof}
  To prove the second assertion, simply note that 
  \begin{equation}
    \left|\frac{w(s+x)}{w(x)}f(s)\right| \leq C|w(s)f(s)|
  \end{equation}
  by Eq. \eqref{eq:quasisubmult}.
  Now note that 
  \begin{equation}
    \mathbf{D}_{w,x,\eta}\left(\frac{w(\cdot+x)}{w(x)}f\right)(s) = (2\pi i \eta)^{-1}f'(s).
  \end{equation}
  Combine Eq. \eqref{eq:cjlims} with Corollary \ref{cor:Dnestimate} and the stationary phase method to find that
  \begin{equation}
    \begin{split}
      \lefteqn{\left|\int_\RR \frac{w(s+x)}{w(x)}f(s) e^{-2\pi i \eta \frac{F^{-1}(s+x)}{w(x)}}~ds\right|}\\
      & = \left|(2\pi i\eta)^{-1}\int_\RR \mathbf{D}^{n}_{w,x,\eta}(f')(s) e^{-2\pi i \eta \frac{F^{-1}(s+x)}{w(x)}}~ds\right|\\
      & \overset{\text{Cor.\ref{cor:Dnestimate}}}{\leq} C_n(2\pi |\eta|)^{-(n+1)} \int_\RR w(-s)^n \sum_{k=1}^{n+1} |f^{(k)}(s)|~ds \leq \frac{(n+1)c_n}{(2\pi |\eta|)^{n+1}},
    \end{split}
  \end{equation}
  where we used Eq. \eqref{eq:cjs} to obtain the final estimate. This completes the proof.
\end{proof}

In our specific case, the function $f$ has a special form, namely $f(s) = \theta(s)\overline{\bd T_z\theta}(s)$. We now determine conditions on $\theta$ such that the estimates obtained through Lemma \ref{lem:cnexist} are integrable. This is the final step for establishing convergence of the triple integral Eq. \eqref{eq:amclass}.

\begin{lemma}\label{lem:cznconvergence}
  Let $F$ be a warping function such that $w=(F^{-1})'\in \mathcal{C}(\RR)$ satisfies Eq. \eqref{eq:quasisubmult}. Let furthermore $v$ be a symmetric, submultiplicative weight function. If $\theta\in\mathcal C^{n+1}(\RR)$ such that for all $0\leq k\leq n+1$
  \begin{equation}  
    \theta^{(k)}\in \bd L^2_{w_1}(\RR)\cap \bd L^2_{w_2}(\RR), 
  \end{equation}
  with
  \begin{equation}
   w_1(s):= v(s)w(-s)^n\sqrt{w(s)}(1+|s|)^{1+\epsilon} \text{ and } w_2(s):= v(s)\sqrt{w(-s)}(1+|s|)^{1+\epsilon},
  \end{equation}
  then 
  \begin{equation}\label{eq:dzintegral}
    \int_\RR \sqrt{w(z)}v(z) c_n(z)~dz < \infty.
  \end{equation}
  Here, 
  \begin{equation}
    c_n(z) := \max_{k=1}^{n+1}
    \left(C_n \int_\RR w(-s)^n |\left(\theta\overline{\bd T_z\theta}\right)^{(k)}(s)|~ds\right).
  \end{equation} 
\end{lemma}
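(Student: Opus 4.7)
The plan is to reduce the claim to a product of single integrals, via Leibniz's rule, Fubini, and a separation of variables that exploits the near-submultiplicativity of $w$ (Eq. \eqref{eq:quasisubmult}) together with symmetry and submultiplicativity of $v$. The two resulting single integrals will be estimated by Cauchy-Schwarz, where the polynomial decay factor $(1+|\cdot|)^{-(1+\epsilon)}$ built into $w_1$ and $w_2$ provides the integrable majorant.

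Concretely, I would first apply Leibniz's rule,
\[
\left(\theta \overline{\bd T_z\theta}\right)^{(k)}(s) = \sum_{j=0}^k \binom{k}{j} \theta^{(j)}(s)\,\overline{\theta^{(k-j)}(s-z)},
\]
so that $c_n(z)$ is dominated by a finite sum (over $0 \leq j \leq k \leq n+1$) of terms of the form $\int_\RR w(-s)^n |\theta^{(j)}(s)||\theta^{(k-j)}(s-z)|\,ds$. Multiplying by $\sqrt{w(z)}v(z)$, applying Fubini–Tonelli, and substituting $u = s-z$ in the (new) inner integral turns the kernel $\sqrt{w(z)}v(z)$ into $\sqrt{w(s-u)}v(s-u)$, so that $\int_\RR \sqrt{w(z)}v(z)\, c_n(z)\,dz$ is controlled by a finite sum of double integrals of the form
\[
\int_\RR w(-s)^n |\theta^{(j)}(s)| \int_\RR \sqrt{w(s-u)}v(s-u)|\theta^{(k-j)}(u)|\,du\,ds.
\]

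Next, I would invoke Eq. \eqref{eq:quasisubmult} to bound $\sqrt{w(s-u)} \leq \sqrt{C}\sqrt{w(s)}\sqrt{w(-u)}$, and use symmetry together with submultiplicativity of $v$ to write $v(s-u) \leq v(s)v(u)$. This separates the double integral into the product
\[
\sqrt{C}\left(\int_\RR w(-s)^n \sqrt{w(s)}\,v(s)\,|\theta^{(j)}(s)|\,ds\right)\left(\int_\RR \sqrt{w(-u)}\,v(u)\,|\theta^{(k-j)}(u)|\,du\right).
\]
Applying Cauchy-Schwarz to each factor, inserting and removing $(1+|\cdot|)^{1+\epsilon}$, the first factor is bounded by $\|1/(1+|\cdot|)^{1+\epsilon}\|_{\bd L^2}\cdot\|\theta^{(j)}\|_{\bd L^2_{w_1}}$ and the second by $\|1/(1+|\cdot|)^{1+\epsilon}\|_{\bd L^2}\cdot\|\theta^{(k-j)}\|_{\bd L^2_{w_2}}$; both are finite by assumption and by integrability of $(1+|\cdot|)^{-2-2\epsilon}$.

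There is no serious obstacle in the calculation itself, but the one delicate point — and the motivation behind the seemingly ad hoc definitions of $w_1$ and $w_2$ — is making sure the separation of $\sqrt{w(s-u)}v(s-u)$ dictated by Eq. \eqref{eq:quasisubmult} matches the weights imposed on $\theta^{(j)}$ and $\theta^{(k-j)}$. The substitution $u = s-z$ forces the $\sqrt{w}$ to split asymmetrically as $\sqrt{w(s)}\sqrt{w(-u)}$, which explains why $w_1$ additionally carries $\sqrt{w(s)}$ on top of $w(-s)^n$, while $w_2$ only carries $\sqrt{w(-u)}$. Once this structural bookkeeping is in place, the rest is a routine combination of Leibniz, Fubini, and Cauchy-Schwarz.
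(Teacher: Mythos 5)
Your argument is correct and follows essentially the same route as the paper's proof: Leibniz's rule, the quasi-submultiplicative splitting $\sqrt{w(z)}\,v(z)\le \tilde C\sqrt{w(s)}\sqrt{w(z-s)}\,v(s)v(z-s)$, and Cauchy--Schwarz against the $(1+|\cdot|)^{1+\epsilon}$ factors built into $w_1,w_2$. The only cosmetic difference is that you fully separate the double integral into a product of two one-dimensional integrals and apply Cauchy--Schwarz twice against $(1+|\cdot|)^{-1-\epsilon}\in\bd L^2(\RR)$, whereas the paper applies Cauchy--Schwarz once to the inner $s$-integral (exploiting translation invariance of the $\bd L^2_{w_2}$-norm) and integrates the leftover factor $(1+|z|)^{-1-\epsilon}$ over $z$ at the end.
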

\begin{proof}
  For ease of notation, we will denote in the following chain of inequalities by $\tilde{C}$ a finite product of nonnegative, finite constants. Therefore, $\tilde{C}$ might have a different value  after each derivation, but is always nonnegative and finite. There is $1\leq k\leq n+1$, such that the LHS of Eq. \eqref{eq:dzintegral} equals
  \begin{equation}
    \begin{split}
      \lefteqn{\tilde{C} \int_\RR \sqrt{w(z)}v(z) \left|\int_\RR w(-s)^n \left(\theta\overline{\bd T_z\theta}\right)^{(k)}(s)~ds\right|~dz}\\
      & \leq \tilde{C} \int_\RR\int_\RR \sqrt{w(z)}v(z)w(-s)^n  \sum_{j=0}^k |\theta^{(j)}(s)||\bd T_z\theta^{(k-j)}(s)|~ds~dz\\
      & \leq \tilde{C} \int_\RR \sum_{j=0}^k \int_\RR |w(s)^{\frac{1}{2}}v(s)w(-s)^n \theta^{(j)}(s)||w(z-s)^{\frac{1}{2}}v(z-s)\theta^{(k-j)}(s-z)|~ds~dz\\
      & \leq \tilde{C} \int_\RR (1+|z|)^{-1-\epsilon} \sum_{j=0}^k \int_\RR  |w_1(s)\theta^{(j)}(s)||w_2(s-z)\theta^{(k-j)}(s-z)|~ds~dz\\
      & \leq \tilde{C} \max_{j=0}^k \left( \|\theta^{(j)}\|_{\bd L^2_{w_1}(\RR)}\|\theta^{(k-j)}\|_{\bd L^2_{w_2}(\RR)} \right) \int_\RR (1+|z|)^{-1-\epsilon}~dz < \infty.
    \end{split}
  \end{equation}
  In this derivation, we used Eq. \eqref{eq:quasisubmult} repeatedly, as well as submultiplicativity and symmetry of $(1+|s|)^{-1-\epsilon}$ and symmetry of $v$. Furthermore, we used the product rule for differentiation and that the appearing sum is finite. In the final step, we applied Cauchy-Schwarz' inequality.
\end{proof}

We are now ready to prove the main result simply by collecting the conditions from the interim results above. The proof itself is only little more than sequentially applying those interim results to the function $I_{\theta,F,m}$ given by Eq. \eqref{eq:amclass}.


\begin{theorem}\label{thm:thetainAm}
  Let $F$ be a warping function such that $w=(F^{-1})'\in \mathcal{C}^{p+1}(\RR)$ satisfies Eq. \eqref{eq:quasisubmult} and $|w^{(k)}/w|\leq D_k$ for some constants $D_k > 0$, $k=0,\ldots,p+1$, where $p\in\NN$ if $D=\RR$ and $p=0$ if $D=\RR^+$.
  Furthermore, let 
  \begin{equation} 
    m(x,y,\xi,\omega) := \max \left\{\frac{m_1(x)m_2(\xi)}{m_1(y)m_2(\omega)},\frac{m_1(y)m_2(\omega)}{m_1(x)m_2(\xi)}\right\},
  \end{equation}
  with weight functions $m_1,m_2$ that satisfy
  \begin{itemize}
   \item[(i)] $m_1\circ F^{-1}$ is $v_1$-moderate, for a symmetric, submultiplicative weight function $v_1$ and
   \item[(ii)] $m_2$ is $v_2$-moderate, for a symmetric, submultiplicative weight function $v_2
   \in \mathcal O\left((1+|\cdot|)^p \right)$.
  \end{itemize}
  Assume that $\theta\in\mathcal C^{p+2}(\RR)\cap \bd L^2_{\sqrt{w}}(\RR)$ satisfies
  \begin{itemize}
      \item[(a)] $\theta, w(-\cdot)^j\theta^{(k+1)}\in \mathcal C_0(\RR)$, for all $0\leq k\leq j,\ 0\leq j\leq p+1$,
   \item[(b)] $\theta\in \bd L^2_{w_1}(\RR)\cap \bd L^2_{w_2}(\RR)$, with $w_1(s) = v_1(s)(1+|s|)^{1+\epsilon}w(-s)^{1/2}$, $w_2(s) = w_1(-s)w(s)$ and
   \item[(c)] $\theta^{(k)}\in \bd L^2_{w_1}(\RR)\cap \bd L^2_{w_3}(\RR)$ for all $0\leq k \leq p+2$, with $w_3(s) = w_1(-s)w(-s)^{p+1}$,
  \end{itemize}
  for some $\epsilon > 0$. 
  Then
  \begin{equation}\label{eq:amclass0}
    \esssup_{x,\xi\in\RR} I_{\theta,F,m} < \infty \text{ and therefore } K_{\theta,F}\in\AAm.
  \end{equation}
\end{theorem}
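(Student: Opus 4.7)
The plan is to start from the expression \eqref{eq:amclass} for $I_{\theta,F,m}(x,\xi)$ and bound it uniformly in $x,\xi$. The first move is to use the moderateness hypotheses on $m_1\circ F^{-1}$ and $m_2$ to extract the $x,\xi$-dependence from $\tilde m$: writing the ratios as
\begin{equation}
\tilde m(x,z,\xi,\eta)\le C\, v_1(z)\,v_2\!\left(\eta/w(x)\right),
\end{equation}
we then observe that $v_2(\eta/w(x))$ is dominated by $C(1+|\eta|)^p$ uniformly in $x$. For $D=\RR$ this uses that $F'$ attains its maximum at $0$ by the monotonicity hypothesis on $|F'|$, forcing $w=1/(F'\circ F^{-1})$ to be bounded below; for $D=\RR^+$ the hypothesis $p=0$ makes $v_2$ bounded, so the estimate is trivial. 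The factor $C_x(z)\le\sqrt{C\,w(z)}$ from \eqref{eq:amclass} is absorbed into an outer $\sqrt{w(z)}\,v_1(z)$ weight.

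Next I would split the $\eta$-integral at $|\eta|=1$ and apply Lemma \ref{lem:cnexist} to the function $f(s)=\theta(s)\overline{\bd T_z\theta(s)}$ in each region, with $n=p+1$. For $|\eta|>1$ use the stationary-phase bound, which yields $(n+1)c_n(z)/(2\pi|\eta|)^{p+2}$; combined with the polynomial factor $(1+|\eta|)^p$ the $\eta$-integrand is $\mathcal O(|\eta|^{-2})$, hence integrable, and the remaining $z$-integral is exactly $\int\sqrt{w(z)}\,v_1(z)\,c_{p+1}(z)\,dz$, finite by Lemma \ref{lem:cznconvergence} once condition (c) is translated into the $w_1,w_2$-hypotheses of that lemma (a short check: condition (c)'s $w_3$ matches Lemma \ref{lem:cznconvergence}'s $w_1$ with $n=p+1$, and condition (c)'s $w_1$ matches its $w_2$, using symmetry of $v_1$). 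To invoke Lemma \ref{lem:cnexist} one has to check the $\mathcal C_0$ requirements on the $k$-th derivatives of $f$, but these follow from condition (a) by the Leibniz rule applied to $\theta$ and $\bd T_z\theta$.

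For $|\eta|\le1$ the second conclusion of Lemma \ref{lem:cnexist} supplies the bound $C\|f\|_{\bd L^1_w}$, independent of $\eta$ (and $v_2(\eta/w(x))$ is bounded on this range). The remaining task is
\begin{equation}
\int_{\RR}\sqrt{w(z)}\,v_1(z)\,\|\theta\,\overline{\bd T_z\theta}\|_{\bd L^1_w}\,dz<\infty,
\end{equation}
which I would attack exactly along the lines of the proof of Lemma \ref{lem:cznconvergence}: substitute $u=s-z$, $v=s$, use $w(v-u)\le Cw(v)w(-u)$ and submultiplicativity of $v_1$ to split the double integral as a product, insert $(1+|\cdot|)^{1+\epsilon}(1+|\cdot|)^{-(1+\epsilon)}$ in each factor and apply Cauchy--Schwarz. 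This produces exactly the norms $\|\theta\|_{\bd L^2_{w_1}}$ and $\|\theta\|_{\bd L^2_{w_2}}$ of condition (b).

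The main obstacle I expect is bookkeeping rather than any single hard estimate: keeping track of the many weights ($w$, $w(-\cdot)$, $v_1$, $v_2$, $(1+|\cdot|)^{1+\epsilon}$) while splitting the triple integral, and verifying that the hypothesis (c) is precisely tuned so that Lemma \ref{lem:cznconvergence} applies with $n=p+1$ and $v=v_1$. Once that is organized, assembling the two regimes $|\eta|\le1$ and $|\eta|>1$ and combining with the uniform $z$-integral gives the desired bound $\operatorname{ess\,sup}_{x,\xi} I_{\theta,F,m}(x,\xi)<\infty$, hence $K_{\theta,F}\in\mathcal A_m$ by the reduction recorded just after the theorem statement.
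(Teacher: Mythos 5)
Your proposal is correct and follows essentially the same route as the paper's own proof: the same extraction of $v_1(z)$ and a uniform-in-$x$ bound $v_2(\eta/w(x))\le V_2(\eta)\in\mathcal O((1+|\eta|)^p)$ (using $w\ge w(0)$ for $D=\RR$), the same split of the $\eta$-integral at $|\eta|=1$, the stationary-phase/$\mathbf{D}^{p+2}_{w,x,\eta}$ estimate together with Corollary \ref{cor:Dnestimate} and Lemma \ref{lem:cznconvergence} for $|\eta|\ge 1$ (yielding the $\|\theta^{(j)}\|_{\bd L^2_{w_3}}\|\theta^{(k)}\|_{\bd L^2_{w_1}}$ terms), and the Cauchy--Schwarz argument producing $\|\theta\|_{\bd L^2_{w_1}}\|\theta\|_{\bd L^2_{w_2}}$ for $|\eta|<1$. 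Your identification of how conditions (a)--(c) feed into Lemmas \ref{lem:cnexist} and \ref{lem:cznconvergence} with $n=p+1$ matches the paper's bookkeeping exactly.
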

\begin{proof}
     Recall the Definition of $I_{\theta,F,m}$ in Eq. \eqref{eq:amclass}
     \begin{equation*}
     I_{\theta,F,m}(x,\xi) = \int_\RR \int_\RR \left| \int_\RR C_x(z) \tilde{m}(x,z,\xi,\eta)\frac{w(s+x)}{w(x)} \theta(s)\overline{\bd T_{z}\theta}(s)e^{-2\pi i \eta\frac{F^{-1}(s+x)}{w(x)}} ~ds\right|~d\omega~dz,
   \end{equation*}
   where $C_x(z) = \sqrt{\frac{w(z+x)}{w(x)}}$ and $\tilde{m}(x,z,\xi,\eta) = m\left(F^{-1}(x),F^{-1}(z+x),\xi,\xi - \frac{\eta}{w(x)}\right)$.
   
   We already know that $C_x(z) = \sqrt{\frac{w(z+x)}{w(x)}}\leq \sqrt{Cw(z)}$ by the assumptions on $F$. We now estimate the time-frequency weight $\tilde m$. To that end, observe
   \begin{equation}
     \begin{split}
	\lefteqn{\tilde m(x,z,\xi,\eta)}\\
	& \leq \max \left\{\frac{m_1(F^{-1}(x))}{m_1(F^{-1}(z+x))},\frac{m_1(F^{-1}(z+x))}{m_1(F^{-1}(x))}\right\}\max \left\{\frac{m_2(\xi)}{m_2(\xi-\eta/w(x))},\frac{m_2(\xi-\eta/w(x))}{m_2(\xi)}\right\}\\
	& \leq \tilde{C}_1v_1(z)\tilde{C}_2v_2\left(\eta/w(x)\right)\\
	& \leq \tilde{C}_1\tilde{C}_2v_1(z)V_2(\eta), \text{ where } V_2(\eta):=\begin{cases}
	                                          \max_{|u|\leq \eta} v_2\left(u/w(0)\right) & \text{ if } D=\RR,\\
	                                          1 			  & \text{ if } D=\RR^+,
						  \end{cases}
     \end{split}
   \end{equation}
   Here we used Conditions (i) and (ii) on $m_1,m_2$. For the final inequality, we used that $w$ is nondecreasing on $\RR^+$ and, if $D=\RR$, symmetric. Note that for $D = \RR^+$ we have $p = 0$.
   
   For sufficiently large $\tilde{C}>0$, 
   \begin{align}
     \lefteqn{I_{\theta,F,m}(x,\xi) }\nonumber\\
     & \leq \tilde{C}\int_\RR\int_\RR \left|\int_\RR \sqrt{w(z)}v_1(z)V_2(\eta) \frac{w(s+x)}{w(x)}\theta(s)\overline{\bd T_z\theta}(s) e^{-2\pi i \eta \frac{F^{-1}(s+x)}{w(x)}}~ds\right|~d\eta ~dz \label{eq:0thorder}\\
     & = \tilde{C}\int_\RR\int_\RR V_2(\eta) \left|\int_\RR \sqrt{w(z)}v_1(z) \mathbf{D}^{p+2}_{w,x,\eta}\left(\frac{w(\cdot+x)}{w(x)}\theta\overline{\bd T_z\theta}\right)(s) e^{-2\pi i \eta \frac{F^{-1}(s+x)}{w(x)}}~ds\right|~d\eta ~dz,\label{eq:pthorder}
   \end{align}
   where we used the method of stationary phase, together with condition (a) on $\theta$. 
   
    To obtain the final estimate, we distinguish between the cases $|\eta| < 1$ and $|\eta|\geq 1$. In the first case, we use the expression in Eq. \eqref{eq:0thorder} and obtain
   \begin{equation}\label{eq:smalleta}
     \begin{split}
       \lefteqn{\int_\RR\int_{-1}^1 V_2(\eta) \left|\int_\RR \sqrt{w(z)}v_1(z) \frac{w(s+x)}{w(x)}\theta(s)\overline{\bd T_z\theta}(s) e^{-2\pi i \eta \frac{F^{-1}(s+x)}{w(x)}}~ds\right|~d\eta ~dz}\\
       & \leq 2 C^{3/2} V_2(1) \int_\RR \int_\RR |w(s)^{3/2}v_1(s)\theta(s)||\sqrt{w(z-s)}v_1(z-s)\overline{\bd T_z\theta}(s)|~ds~dz\\
       & \leq 2 C^{3/2} V_2(1) \|\theta\|_{\bd L^2_{w_2}(\RR)}\|\theta\|_{\bd L^2_{w_1}(\RR)} \int_\RR (1+|z|)^{-1-\epsilon}~dz < \infty,
     \end{split}
   \end{equation}
   where the derivation follows the steps of the proof of Lemma \ref{lem:cznconvergence}.
   
   For the case $|\eta|\geq 1$, our estimate is based on Eq. \eqref{eq:pthorder}. 
   \begin{equation}\label{eq:largeeta}
     \begin{split}
       \lefteqn{\int_\RR\int_{\RR\setminus]-1,1[} V_2(\eta) \left|\int_\RR \sqrt{w(z)}v_1(z) \mathbf{D}^{p+2}_{w,x,\eta}\left(\frac{w(\cdot+x)}{w(x)}\theta\overline{\bd T_z\theta}\right)(s) e^{-2\pi i \eta \frac{F^{-1}(s+x)}{w(x)}}~ds\right|~d\eta ~dz}\\
       & \overset{\text{Cor.\ref{cor:Dnestimate}}}{\leq} C_{p+1}\int_\RR\int_{\RR} \frac{V_2(1+|\eta|)}{(2\pi(1+|\eta|)^{p+2}} \int_\RR \sqrt{w(z)}v_1(z)w(-s)^{p+1} \sum_{k=1}^{p+2}|\left(\theta \bd T\theta\right)^{(k)}(s)|~ds~d\eta ~dz\\
       & \overset{\text{Lem.\ref{lem:cznconvergence}}}{\leq} (p+2)\int_\RR \sqrt{w(z)}v_1(z) c_{p+1}(z) \int_{\RR} \frac{V_2(1+|\eta|)}{(2\pi(1+|\eta|)^{p+2}}~d\eta ~dz < \infty.       
     \end{split}
   \end{equation}
   To obtain finiteness, we used Lemma \ref{lem:cznconvergence} and $v_2\in\mathcal O\left((1+|\cdot|)^p \right) \Rightarrow V_2\in\mathcal O\left((1+|\cdot|)^p \right)$.
   Combine Eqs. \eqref{eq:smalleta} and \eqref{eq:largeeta} to prove the assertion. A more precise statement is obtained using the estimate in the proof of Lemma \ref{lem:cznconvergence}:
   \begin{equation}
     I_{\theta,F,m}(x,\xi) \leq \left(2V_2(1) \|\theta\|_{\bd L^2_{w_2}(\RR)}\|\theta\|_{\bd L^2_{w_1}(\RR)} + (p+2)E \max_{j=0}^{p+2} \left( \|\theta^{(j)}\|_{\bd L^2_{w_3}(\RR)}\|\theta^{(p-j+2)}\|_{\bd L^2_{w_1}(\RR)} \right)  \right) \tilde{C}Z_{\epsilon},
   \end{equation}
   with $E:= \int_{\RR} \frac{V_2(1+|\eta|)}{(2\pi(1+|\eta|)^{p+2}}~d\eta$ and $Z_{\epsilon} = \int_\RR (1+|z|)^{-1-\epsilon}~dz$. This completes the proof.
\end{proof}

We now have a set of conditions on $F$, $m$ and $\theta$ that guarantee $K_{\theta,F}\in\AAm$ and therefore allow the construction of a set of (generalized) coorbit spaces by applying Theorems \ref{thm:resanddual} and \ref{thm:coorbits}. It is easy to see that Theorem \ref{thm:mainres1} is just a special case of Theorem \ref{thm:thetainAm}.

\begin{proof}[Proof of Theorem \ref{thm:mainres1}]
  Set $m_2 = 1$ to see that the conditions on $F$ and $m$ imply the conditions of Theorem \ref{thm:thetainAm}. Furthermore, note that $\theta\in\mathcal C^\infty_c$ implies $\theta\in \bd L^2_{\sqrt{w}}$ and conditions (a-c). If furthermore $w=(F^{-1})'$ and $v_2$ are polynomial, then $\theta\in\mathcal S$ is sufficient for $\theta\in \bd L^2_{\sqrt{w}}$ and to imply conditions (a-c). Therefore, the result follows immediately from Theorem \ref{thm:thetainAm}.
\end{proof}

If $K_{\theta_1,F},K_{\theta_2,F}\in\AAm$ and additionally $\theta_1,\theta_2 \in \bd L^2_{\sqrt{w}}(\RR)\cap \LtR$, then we can also apply Proposition \ref{pro:mixedkern}, concluding that $\mathcal{G}(\theta_1,F)$ and $\mathcal{G}(\theta_2,F)$ give rise to the same coorbit spaces.

\begin{proposition}\label{pro:protoindep}
  If $\theta_1,\theta_2 \in \bd L^2_{\sqrt{w}}(\RR)\cap \LtR$ are such that $K_{\theta_1,F},K_{\theta_2,F}\in\AAm$, then 
  \begin{equation}
    K_{\theta_1,\theta_2,F} := K_{\mathcal{G}(\theta_1,F),\mathcal{G}(\theta_2,F)} \in\AAm.
  \end{equation}
  Hence, $\text{Co}(\mathcal{G}(\theta_1,F),Y) = \text{Co}(\mathcal{G}(\theta_2,F),Y)$ for all $Y$ that satisfy Eq. \eqref{eq:BScond}.
\end{proposition}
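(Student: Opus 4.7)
The plan hinges on reducing the mixed kernel to ordinary kernels via a polarization identity. Note first that the second assertion follows directly from the first: Theorem \ref{thm:orthrel} ensures that both $\mathcal{G}(\theta_1, F)$ and $\mathcal{G}(\theta_2, F)$ are continuous tight frames on $\mathcal{F}^{-1}(\bd L^2(D))$ (with frame bound determined by $\|\theta_i\|_{\bd L^2}$), so if $K_{\theta_1,\theta_2, F} \in \AAm$, every hypothesis of Proposition \ref{pro:mixedkern} is satisfied and the coorbit spaces coincide.

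For the first assertion, observe that by the very definition of $g_{x,\xi}$ in Definition \ref{def:warpedsystem}, the atoms associated with the prototypes $\theta_1 + \theta_2$ and $\theta_1 + i\theta_2$ are the linear combinations $g^{(1)}_{x,\xi} + g^{(2)}_{x,\xi}$ and $g^{(1)}_{x,\xi} + ig^{(2)}_{x,\xi}$ of the individual atoms $g^{(i)}_{x,\xi}$. Expanding the associated kernels by sesquilinearity of the inner product yields
\begin{equation*}
  K_{\theta_1 + \theta_2, F} = K_{\theta_1, F} + K_{\theta_2, F} + K_{\theta_1,\theta_2, F} + K_{\theta_2,\theta_1, F},
\end{equation*}
\begin{equation*}
  K_{\theta_1 + i\theta_2, F} = K_{\theta_1, F} + K_{\theta_2, F} + iK_{\theta_1,\theta_2, F} - iK_{\theta_2,\theta_1, F}.
\end{equation*}
Solving this $2\times 2$ linear system expresses $K_{\theta_1,\theta_2, F}$ as an explicit linear combination of $K_{\theta_1 + \theta_2, F}$, $K_{\theta_1 + i\theta_2, F}$, $K_{\theta_1, F}$, and $K_{\theta_2, F}$. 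Since $\AAm$ is a Banach space, it then suffices to show that $K_{\theta_1 + \theta_2, F}$ and $K_{\theta_1 + i\theta_2, F}$ both lie in $\AAm$.

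To close the loop, I would invoke Theorem \ref{thm:thetainAm} applied to the sums $\theta_1 + \theta_2$ and $\theta_1 + i\theta_2$. The hypotheses (a)--(c) on the prototype in Theorem \ref{thm:thetainAm} are all phrased as membership in linear function spaces (derivatives lying in weighted $\bd L^2$ spaces, and $\mathcal{C}_0$-type boundary conditions). Hence, if $\theta_1$ and $\theta_2$ individually satisfy (a)--(c), then so do $\theta_1 + \theta_2$ and $\theta_1 + i\theta_2$, and Theorem \ref{thm:thetainAm} immediately delivers $K_{\theta_1 + \theta_2, F}, K_{\theta_1 + i\theta_2, F} \in \AAm$.

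The main obstacle is the gap between the bare hypothesis ``$K_{\theta_i, F} \in \AAm$'' stated in the proposition and the stronger pointwise regularity/decay conditions used in Theorem \ref{thm:thetainAm}; the cleanest resolution is to read the proposition under conditions (a)--(c), which is the intended use case in the subsequent discretization results. As a backup route that avoids polarization entirely, one can retrace the proof of Theorem \ref{thm:thetainAm} with the product $\theta\,\overline{T_z\theta}$ replaced throughout by $\theta_1\,\overline{T_z\theta_2}$: the Leibniz rule expands each derivative of this product into a sum of terms $\theta_1^{(j)}\,\overline{T_z\theta_2^{(k-j)}}$, and Cauchy--Schwarz applied after the $s$-integration (exactly as in the proof of Lemma \ref{lem:cznconvergence}) turns every bound into a product $\|\theta_1^{(j)}\|_{\bd L^2_{w_k}}\|\theta_2^{(k-j)}\|_{\bd L^2_{w_l}}$. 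These products are finite whenever both $\theta_1$ and $\theta_2$ satisfy conditions (a)--(c), so the same computation proving $\esssup_{x,\xi}I_{\theta,F,m}<\infty$ now yields $\esssup_{x,\xi}I_{\theta_1,\theta_2,F,m}<\infty$, i.e. $K_{\theta_1,\theta_2,F}\in\AAm$.
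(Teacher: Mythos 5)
Your reduction of the second assertion to the first via Proposition \ref{pro:mixedkern} is fine, and the polarization identity itself is sound (up to the differing normalizations $A_\theta^{-1}$ built into the diagonal kernels in Eq.~\eqref{eq:framekern}, which is harmless since membership in $\AAm$ is invariant under nonzero scaling). The genuine gap is the one you flag yourself and then do not close: the proposition assumes only $K_{\theta_1,F},K_{\theta_2,F}\in\AAm$ together with $\theta_1,\theta_2\in\bd L^2_{\sqrt{w}}(\RR)\cap\LtR$, whereas both of your routes --- applying Theorem \ref{thm:thetainAm} to $\theta_1+\theta_2$ and $\theta_1+i\theta_2$, or re-running its proof with $\theta\,\overline{\bd T_z\theta}$ replaced by $\theta_1\,\overline{\bd T_z\theta_2}$ --- require the sufficient conditions (a)--(c) on both prototypes. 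Those conditions are strictly stronger than membership of the diagonal kernels in $\AAm$, so what you prove is a different proposition with a strengthened hypothesis. Moreover, the polarization route cannot be repaired from the stated hypotheses alone: to conclude $K_{\theta_1+\theta_2,F}\in\AAm$ by polarization you would already need the mixed kernel, so the argument is circular unless the extra regularity is imported.

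The paper closes the gap with a structurally different and much shorter argument that uses only the stated hypotheses: since $\AAm$ is a Banach algebra under kernel composition, $K_{\theta_1,F}\circ K_{\theta_2,F}\in\AAm$; writing the composition as $A_{\theta_1}^{-1}A_{\theta_2}^{-1}\langle V_{\theta_2,F}g^2_{y,\omega},V_{\theta_1,F}g^1_{x,\xi}\rangle$ and applying the orthogonality relations of Theorem \ref{thm:orthrel} (this is exactly where $\theta_1,\theta_2\in\bd L^2_{\sqrt{w}}(\RR)\cap\LtR$ enters) yields
\begin{equation*}
  K_{\theta_1,\theta_2,F}=\frac{A_{\theta_1}A_{\theta_2}}{\langle\theta_1,\theta_2\rangle}\,K_{\theta_1,F}\circ K_{\theta_2,F}\in\AAm.
\end{equation*}
This is the idea your proposal is missing: the reproducing formula converts the composition of the two diagonal kernels into a scalar multiple of the mixed kernel, so no pointwise smoothness or decay conditions on the $\theta_i$ are needed beyond the hypotheses. (The paper's identity does tacitly require $\langle\theta_1,\theta_2\rangle\neq0$, a caveat it leaves unaddressed --- one that your polarization identity would in principle sidestep, if it could be run under the stated hypotheses.)
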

\begin{proof}
  Recall that $\AAm$ is an algebra, i.e. $K_1\circ K_2\in\AAm$, if $K_1,K_2\in\AAm$. By definition of the algebra multiplication,
  \begin{equation}
    \left( K_{\theta_1,F}\circ K_{\theta_2,F} \right)(x,y,\xi,\omega) = \int_D\int_\RR K_{\theta_1,F}(x,z,\xi,\eta)K_{\theta_2,F}(z,y,\eta,\omega)~d\eta~dz.
  \end{equation}  
  Insert the definition of $K_{\theta_1,F},K_{\theta_2,F}$ to obtain 
  \begin{equation}
  \begin{split}
    \left( K_{\theta_1,F}\circ K_{\theta_2,F} \right) (x,y,\xi,\omega) & = A_{\theta_1}^{-1}A_{\theta_2}^{-1}\int_D\int_\RR \overline{V_{\theta_1,F}g^0_{x,\xi}(z,\eta)}V_{\theta_2,F}g^1_{y,\omega}(z,\eta)~d\eta~dz\\
    & = A_{\theta_1}^{-1}A_{\theta_2}^{-1}\langle V_{\theta_2,F} g^2_{y,\omega},V_{\theta_1,F}g^1_{x,\xi} \rangle,
  \end{split}
  \end{equation}
  where $A_{\theta_1},A_{\theta_2}$ are the frame bounds of $\mathcal G(\theta_1,F)$, resp. $\mathcal G(\theta_2,F)$. Since $\theta_1,\theta_2 \in \bd L^2_{\sqrt{w}}(\RR)\cap \LtR$, we can apply the orthogonality relations Theorem \ref{thm:orthrel}, to obtain 
  \begin{equation}
    \langle V_{\theta_2,F}g^2_{y,\omega},V_{\theta_1,F}g^1_{x,\xi} \rangle = \langle \theta_1, \theta_2 \rangle \langle g^2_{y,\omega},g^1_{x,\xi} \rangle.
  \end{equation}
  In other words, 
  \begin{equation}
    K_{\theta_1,\theta_2,F} = \frac{A_{\theta_1}A_{\theta_2}}{\langle \theta_1,\theta_2 \rangle} K_{\theta_1,F}\circ K_{\theta_2,F} 
  \end{equation}
  and therefore $K_{\theta_1,\theta_2,F}\in\AAm$.
\end{proof}

\begin{corollary}
  In the setting of Theorem \ref{thm:mainres1}, the coorbit spaces generated by $\mathcal{G}(\theta,F)$, $\theta\in \mathcal C^\infty_c(\RR)$ (respectively $\theta\in\mathcal{S}(\RR)$) are independent of the particular choice of $\theta\in \mathcal C^\infty_c(\RR)$ ($\theta\in\mathcal{S}(\RR)$).
\end{corollary}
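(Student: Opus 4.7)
The plan is to show that this corollary follows almost immediately by combining Theorem \ref{thm:mainres1} with Proposition \ref{pro:protoindep}. Fix any two prototypes $\theta_1, \theta_2$ from the chosen class ($\mathcal{C}^\infty_c(\RR)$ or $\mathcal{S}(\RR)$, respectively). Theorem \ref{thm:mainres1} directly gives $K_{\theta_j,F}\in\AAm$ for $j=1,2$, so the only thing to verify in order to invoke Proposition \ref{pro:protoindep} is membership $\theta_j\in \bd L^2_{\sqrt{w}}(\RR)\cap \LtR$.

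For $\theta_j\in \mathcal{C}^\infty_c(\RR)$, this is essentially free: compactly supported continuous functions lie in $\bd L^2$ trivially, and they lie in $\bd L^2_{\sqrt{w}}$ because $w$ is continuous (hence bounded on the compact support). For $\theta_j\in \mathcal{S}(\RR)$ in the second case, one uses the extra hypothesis of Theorem \ref{thm:mainres1} that $w\in\mathcal{O}((1+|\cdot|)^p)$, so that $\sqrt{w}$ grows at most polynomially and Schwartz functions again lie in $\bd L^2_{\sqrt{w}}$ (and obviously in $\LtR$). Thus the assumptions of Proposition \ref{pro:protoindep} are met.

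Applying Proposition \ref{pro:protoindep} for any solid Banach space $Y$ satisfying the condition \eqref{eq:BScond} of Theorem \ref{thm:coorbits} (which is the setting under which the coorbit spaces $\mathrm{Co}Y$ are defined) then gives
\begin{equation*}
\mathrm{Co}(\mathcal{G}(\theta_1,F),Y) = \mathrm{Co}(\mathcal{G}(\theta_2,F),Y),
\end{equation*}
which is the claimed independence. Since $\theta_1,\theta_2$ were arbitrary in the chosen class, the corollary follows. There is no real obstacle here — the corollary is a bookkeeping consequence of the main technical work already encoded in Theorem \ref{thm:mainres1} (kernel membership in $\AAm$) and Proposition \ref{pro:protoindep} (Moyal-type orthogonality implies mixed-kernel membership). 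The only minor care needed is the verification of $\bd L^2_{\sqrt{w}}\cap \LtR$ membership, which is immediate in both cases under the hypotheses of Theorem \ref{thm:mainres1}.
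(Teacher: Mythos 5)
Your proof is correct and follows essentially the same route as the paper, which simply cites Proposition \ref{pro:protoindep} together with the inclusions $\mathcal{C}^\infty_c(\RR)\subset\mathcal{S}(\RR)\subset \bd L^2_{\sqrt{w}}(\RR)\cap \LtR$. Your version is in fact slightly more careful, since you note that $\mathcal{S}(\RR)\subset \bd L^2_{\sqrt{w}}(\RR)$ only needs (and only uses) the polynomial growth hypothesis on $w$ from the second part of Theorem \ref{thm:mainres1}, whereas the compactly supported case holds for any continuous $w$.
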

\begin{proof}
  Follows immediately from Proposition \ref{pro:protoindep}, since $C^\infty_c(\RR)\subset\mathcal{S}(\RR)\subset \bd L^2_{\sqrt{w}}(\RR)\cap \LtR$.
\end{proof}

We can now construct the coorbits of an abstract, solid Banach space $Y$ with respect to $\mathcal G(\theta,F)$, provided $Y$ satisfies Eq. \eqref{eq:BScond}. Let us provide some examples of such spaces and suitable combinations of weight functions $m$ and warping functions $F$.

Fix $1\leq p,q\leq \infty$ and choose a continuous weight function $v: D\times\RR \rightarrow \RR^+$. Then by Schur's test~\cite{samarah2005schur}, the 
 weighted mixed norm space $\bd L^{p,q}_{v}(D\times\RR)$ satisfies Eq. \eqref{eq:BScond} with 
    \begin{equation}
      m_v(x,y,\xi,\omega) := \max \left\{\frac{v(x,\xi)}{v(y,\omega)},\frac{v(y,\omega)}{v(x,\xi)}\right\}
    \end{equation}
If $v$ that is also bounded away from $0$ (resp. bounded above), then $v_{y,\omega}(x,\xi):= m(x,y,\xi,\omega)$ and $v$ (resp. $v_{y,\omega}^{-1}$ and $v$) are equivalent weights, for any fixed $(y,\omega)\in D\times \RR$. 

Let additionally $v$ be such that there is an equivalent tensor weight $\tilde{v}(x,\xi):=\tilde{v}_1(x)\tilde{v}_2(\xi)$, i.e. there are $C_1,C_2>0$ such that $C_1 \tilde{v} \leq v \leq C_2 \tilde{v}$. Then $m_v$ and $m_{\tilde{v}}$ are equivalent and we can try to apply Theorem \ref{thm:thetainAm} with regards to $\mathcal{A}_{m_v} = \mathcal{A}_{m_{\tilde{v}}}$. If $D=\RR$ and $\tilde{v}_2$ is a polynomial weight, then condition (ii) in Theorem \ref{thm:thetainAm} is satisfied for some $p\in\NN$. Otherwise only $\tilde{v}_2\equiv 1$ is valid. 

For $\tilde{v}_1$ however, we require $\tilde{v}_1\circ F^{-1}$ to be $v_1$-moderate for some symmetric, submultiplicative weight $v_1$. Without loss of generality, we can assume $v_1(x)=e^{a|x|}$ for some $a\geq 0$.

\begin{example}[Polynomial weights:]
Let $F_0(x) = \log(x)$, $F_1(x)=\sgn(x)\log(1+|x|)$ and $\tilde{v}_1(x) = (1+|x|)^p$, for some $p>0$. Then 
\begin{equation*}
 \tilde{v}_1(F_0^{-1}(x)) = (1+e^x)^p \leq 2e^{p|x|} \text{ and } \tilde{v}_1(F_1^{-1}(x)) = e^{p|x|}.
\end{equation*}
Similarly, if $F_{2,l}(x) = \sgn(x)\left((1+|x|)^l-1\right)$, $l\in [0,1[$, then 
\begin{equation*}
  \tilde{v}_1(F_{2,l}^{-1}(x))\in\mathcal O\left(1+|x|^{p/l}\right).                                                                                 
\end{equation*}
Consequently, $\tilde{v}_1\circ F^{-1}$ is $v_1$-moderate in all those cases. Hence, polynomial weights $\tilde{v}_1$ satisfy condition (i) in Theorem \ref{thm:thetainAm} for Examples \ref{ex:wavelet1}, \ref{ex:ERB1} and \ref{ex:alpha1}.

However, only in the case of $F_{2,l}$ is $\tilde{v}_1(F_{2,l}^{-1}(x))$ itself polynomial and $\theta\in\mathcal S$ a sufficient condition on $\mathcal{G}(\theta,F)$ to satisfy Theorem \ref{thm:thetainAm}. For $F_0,F_1$, $\theta\in \mathcal C^\infty_c$ is sufficient.
\end{example}

\begin{example}[Subexponential weights:]
Let $F_0,F_1$ and $F_{2,l}$ be as in the previous example, but $\tilde{v}_1(x) = e^{|x|^{\alpha}}$, for some $0 <\alpha\leq 1$. Then 
\begin{equation*}
 \tilde{v}_1(F_0^{-1}(x)) = e^{e^{\alpha x}} \text{ and } \tilde{v}_1(F_1^{-1}(x)) = e^{(e^{|x|}-1)^\alpha},
\end{equation*}
both of which are obviously not $v_1$-moderate. 

On the other hand, 
\begin{equation*}
  \tilde{v}_1(F_{2,l}^{-1}(x)) = e^{|F^{-1}_{2,l}(x)|^\alpha}, \text{ where } |F^{-1}_{2,l}(x)|^\alpha \in \Theta(1+|x|^{\alpha/l}). 
\end{equation*}
Hence, $\tilde{v}_1$ satisfies condition (i) if and only if $\alpha \leq l^{-1}$. Moreover, $\theta\in\mathcal S$ is never sufficient for $\mathcal{G}(\theta,F)$ to satisfy Theorem \ref{thm:thetainAm}, while $\theta\in \mathcal C^\infty_c$ always is.
\end{example}
    

\section{Discrete frames and atomic decompositions}\label{sec:decomps}

  We will now construct moderate, admissible coverings (see Definition \ref{def:modadmcover}) and show that families of covers and a canonical choice of $\Gamma$ exist, such that the associated $\Gamma$-oscillation converges to $0$ in $\AAm$, i.e. 
  \begin{equation*}
      \|\text{osc}_{\mathcal U^\delta,\Gamma}\|_{\AAm} \overset{\delta \rightarrow 0}{\rightarrow} 0\text{ and } C_{m,\mathcal U^\delta} \overset{\delta \rightarrow 0}{\rightarrow} C < \infty,
  \end{equation*}
  for any admissible warping function $F$ and sufficiently smooth, quickly decaying prototype $\theta$.

  Consequently, the discretization machinery provided by Sections \ref{ssec:discret} and \ref{sec:genosc} can be put to work, providing atomic decompositions and Banach frames with respect to $\mathcal G(\theta,F)$ and the family of coverings $\mathcal U^\delta$, $\delta > 0$. Let us first define a prototypical family of coverings induced by the warping function.
  
  \begin{definition}\label{def:inducedcover}
    Let $F$ be a warping function that satisfies Eq. \eqref{eq:quasisubmult}. Define $\mathcal U^\delta_F = \{U^\delta_{l,k}\}_{l,k\in\ZZ}$, $\delta>0$ by
    \begin{equation}\label{eq:deltacover}
      U_{l,k}^{\delta}:= I_{F,l}^{\delta} \times \left[\frac{\delta^2 k}{|I_{F,l}^{\delta}|},\frac{\delta^2 (k+1)}{|I_{F,l}^{\delta}|}\right], \text{ where } I_{F,l}^{\delta}:= \left[F^{-1}(\delta l),F^{-1}(\delta (l+1))\right].
    \end{equation}
    We call $\mathcal U^\delta_F$ the \emph{$F$-induced $\delta$-cover}. For all $\delta > 0$, $\mathcal U^\delta_F$ is a moderate, admissible covering with $\mu(U_{l,k}^{\delta}) = \delta^2$, where $\mu$ is the standard Lebesgue measure.
  \end{definition}

  Let us state our second main result.
  
  \begin{theorem}\label{thm:mainres2}
    Let $F~:~D\mapsto \RR$ be a warping function with $w = (F^{-1})'\in\mathcal{C}^1(\RR)$, such that for all $x,y\in\RR$:
   \begin{equation}
      \frac{w(x+y)}{w(x)w(y)} \leq C<\infty \text{ and } \left|\frac{w'}{w}\right|(x)\leq D_1<\infty.
   \end{equation}
   Furthermore, let $\mathcal U^\delta_F$ the induced $\delta$-cover and $m_1:D\rightarrow \RR$ such that $m_1\circ F^{-1}$ is $v_1$-moderate, for a symmetric, submultiplicative weight function $v_1$ and define $m(x,y,\xi,\omega) = \max\left\{\frac{m_1(x)}{m_1(y)},\frac{m_1(y)}{m_1(x)}\right\}$.
   Then $\sup_{l,k\in \ZZ}\sup_{(x,\xi),(y,\omega)\in U_{l,k}^\delta}  m(x,y,\xi,\omega) < \infty$ and 
   \begin{equation}
     \text{osc}_{\mathcal U^\delta_F,\Gamma}\in \AAm,\quad \text{ for all } \theta\in\mathcal C^\infty_c,\ \delta>0,
   \end{equation}
   where $\Gamma(x,y,\xi,\omega) = e^{2\pi i (\omega-\xi)x}$.
   If furthermore $w,v_1\in \mathcal O\left((1+|\cdot|)^p\right)$ for some $p\in\RR^+$, then
   \begin{equation}
     \text{osc}_{\mathcal U^\delta_F,\Gamma}\in \AAm,\quad \text{ for all } \theta\in \mathcal S,\ \delta>0.
   \end{equation} 
   For sufficiently small $\delta$, there are constants $C>0$ and $C_{m,\mathcal U^\delta_{F}} \geq \sup_{k,l\in \ZZ}\sup_{(x,\xi),(y,\omega)\in U_{l,k}^\delta}  m(x,y,\xi,\omega)$ such that $C_{m,\mathcal U^\delta_{F}} < C$. Furthermore,
   \begin{equation}
     \forall\ \theta\in\mathcal C^\infty_c (\theta\in \mathcal S),\ \epsilon>0\ \exists\ \delta>0 \text{ such that } \|\text{osc}_{\mathcal U^\delta_F,\Gamma}\| < \epsilon.
   \end{equation}
  \end{theorem}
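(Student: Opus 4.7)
The plan is to reduce each claim to an adapted version of the machinery built in Theorem \ref{thm:thetainAm} by exploiting the specific geometry of the induced $\delta$-cover and the carefully chosen phase $\Gamma$. First, for the weight bound, I would observe that if $(x,\xi),(y,\omega)\in U^\delta_{l,k}$, then $F(x),F(y)\in[\delta l,\delta(l+1)]$, so $|F(x)-F(y)|\le\delta$. Since $m_1\circ F^{-1}$ is $v_1$-moderate,
\[
\frac{m_1(x)}{m_1(y)} = \frac{(m_1\circ F^{-1})(F(x))}{(m_1\circ F^{-1})(F(y))} \le C_1\, v_1(F(x)-F(y)) \le C_1 \sup_{|s|\le\delta} v_1(s),
\]
which is finite since $v_1$ is continuous. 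Moreover $m$ is independent of $(\xi,\omega)$, so taking $C_{m,\mathcal U^\delta_F}$ equal to the RHS produces the desired bound, and the limit as $\delta\to 0$ is $C_1 v_1(0)<\infty$.

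Next, for the oscillation, I would write out $K_{\theta,F}((x,\xi),(y,\omega))-\Gamma(y,\omega,z,\eta)K_{\theta,F}((x,\xi),(z,\eta))$ explicitly. Using $g_{y,\omega} = T_\omega\check{g_y}$ and Plancherel, this difference equals
\[
A^{-1}\int_D e^{2\pi i(\xi-\omega)t}\left( g_y(t)-e^{2\pi i(\eta-\omega)(y-t)}g_z(t)\right)\overline{g_x(t)}\,dt.
\]
The factor $e^{2\pi i(\eta-\omega)y}$ is exactly $\Gamma(y,\omega,z,\eta)$, which is the reason for its specific form: it kills the $t$-independent part of the modulation induced by moving $\omega\to\eta$, so only $e^{-2\pi i(\eta-\omega)t}$ remains, which behaves well for $t$ in the effective support of $g_x\overline{g_y}$. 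After performing the same substitutions $s=F(t)-F(x)$, $z'=F(y)-F(x)$ as in Section \ref{sec:warp}, the supremum over $(z,\eta)\in Q_{(y,\omega)}$ of this expression becomes an integral of the same shape as $I_{\theta,F,m}$ from Eq.~\eqref{eq:amclass}, but with the prototype $\theta\overline{T_{z'}\theta}$ replaced by a $\delta$-dependent perturbation of it. Concretely, one gets a function of the form $\theta\cdot\overline{(T_{z'}\theta - e^{2\pi i\alpha(\delta)}T_{z'+\beta(\delta)}\theta)}$, where $|\alpha(\delta)|, |\beta(\delta)|\le O(\delta)$ uniformly on $U^\delta_{l,k}$.

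I would then run the stationary phase argument from Lemma~\ref{lem:Dnoperator}, Corollary~\ref{cor:Dnestimate} and Lemma~\ref{lem:cznconvergence} verbatim, but applied to this perturbed prototype. This shows $\text{osc}_{\mathcal U^\delta_F,\Gamma}\in\AAm$ for all $\delta>0$ under the same hypotheses on $\theta$ (either $\mathcal C^\infty_c$ in general, or $\mathcal S$ in the polynomial-weight case), with norm bounds of the form
\[
\|\text{osc}_{\mathcal U^\delta_F,\Gamma}\|_{\AAm} \le \tilde C\left(\|\theta\|_{L^2_{w_1}}\|\theta-\theta_\delta\|_{L^2_{w_2}} + \max_{j}\|\theta^{(j)}\|_{L^2_{w_1}}\|(\theta-\theta_\delta)^{(k)}\|_{L^2_{w_3}}\right),
\]
where $\theta_\delta$ denotes the phase-modulated translate appearing above. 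The hard part, which is where I expect the technical work to concentrate, is showing that each of the factors involving $\theta-\theta_\delta$ tends to $0$ as $\delta\to 0$ in the appropriate weighted $L^2$ norm; this requires a dominated convergence argument using the smoothness and decay of $\theta$ to justify uniformity in $(x,\xi)$, plus a careful accounting of how $\alpha(\delta),\beta(\delta)$ depend on the base point when $D=\RR^+$ (where the lower bounds of $w$ degenerate). Once this convergence is established, combining with the $\delta$-uniform bound on $C_{m,\mathcal U^\delta_F}$ proves the final claim, and the existence of $\delta_0$ satisfying the hypotheses of Theorem \ref{thm:discret2} follows automatically.
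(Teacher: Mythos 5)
Your overall architecture is the same as the paper's: the weight bound is handled exactly as in Proposition \ref{pro:boundedCU} (with $m_2\equiv 1$ the frequency component is trivial, and $|F(x)-F(y)|\le\delta$ plus $v_1$-moderateness gives the claim), and the oscillation is treated by rewriting $\widehat{g_{y,\omega}}-\Gamma\,\widehat{g_{z,\eta}}$ as the warp of a perturbed prototype and feeding that perturbation into the stationary-phase machinery of Theorem \ref{thm:thetainAm}, which is precisely what Theorem \ref{thm:osckernel} does. However, there is a concrete misstep in your description of the perturbation, and it sits exactly where you defer the "hard part". You claim the perturbed prototype has the form $\theta\cdot\overline{\bigl(T_{z'}\theta-e^{2\pi i\alpha(\delta)}T_{z'+\beta(\delta)}\theta\bigr)}$ with a \emph{constant} phase $\alpha(\delta)$. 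It does not: after the substitution the perturbation is $\theta-\sqrt{w(F(y))/w(F(z_0))}\,\operatorname{E}_{F(y),\epsilon_1}(T_{\epsilon_0}\theta)$, where $\operatorname{E}_{y,\epsilon}$ multiplies by the \emph{nonconstant} chirp $e^{2\pi i\epsilon\,(F^{-1}(t+y)-F^{-1}(y))/w(y)}$. The residual phase left after subtracting $\Gamma$ is $t$-dependent by construction; if it were constant the whole $\Gamma$-modification of Section \ref{sec:genosc} would be unnecessary. Because the stationary-phase argument differentiates the prototype $p+2$ times, each derivative of this chirp produces factors of order $\epsilon_1\, w(t+F(y))/w(F(y))\lesssim \epsilon_1 C w(t)$, so "running Lemma \ref{lem:cznconvergence} verbatim" fails: one needs the strengthened decay conditions ($\tilde{a}$), ($\tilde{c}$) of Theorem \ref{thm:osckernel} (extra powers of $w$ on $\theta^{(k)}$) and the separate Lemmas \ref{lem:continuouseye} and \ref{lem:continuousdiffeye}, which control $(\operatorname{E}_{y,\epsilon}\theta)^{(n)}$ and prove the uniform-in-$y$ continuity at $\epsilon=0$ in the weighted $\bd L^2$ norms. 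For $\theta\in\mathcal C^\infty_c$ (or $\mathcal S$ with polynomial weights) these strengthened conditions are satisfied, so the theorem survives, but your proposal does not contain the argument that makes the deferred limit $\delta\to0$ actually go through.

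A second, smaller omission: the uniform bounds $|\epsilon_0|\le\delta$ and $|\epsilon_1|=w(F(y))|\omega-\eta|\le Cw(\delta)\delta$ are not automatic; they are exactly what Lemma \ref{lem:coverings} extracts from the specific frequency spacing $\delta^2/|I^\delta_{F,l}|$ of the induced cover, via the FTC estimate $|I^\delta_{F,l}|\ge \delta C^{-1}w(F(y))/w(\delta)$. Your assertion that $|\alpha(\delta)|,|\beta(\delta)|\le O(\delta)$ "uniformly on $U^\delta_{l,k}$" is the conclusion of that lemma, not a triviality, and it is the reason the cover must be adapted to $F$ in the first place. You flag the degeneration of $w$ for $D=\RR^+$ as a concern, which is the right instinct, but the resolution (the reciprocal scaling of the frequency intervals) needs to be stated for the proof to close.
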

  
  Similar to the previous section, Theorem \ref{thm:mainres2} is a special case of a more general result with weaker conditions on $F$, $m$ and $\theta$. And once more, the proof of that result requires some amount of preparation. First, we take a closer look at the sets $Q_{y,\omega}$ from the definition of the $\Gamma$-oscillation kernel.
  
  \begin{lemma}\label{lem:coverings}
       Let $F$ be a warping function that satisfies Eq. \eqref{eq:quasisubmult} and $\mathcal U^\delta_F$ the induced $\delta$-cover. 
       For all $(y,\omega)\in D\times \RR$ and all $\delta > 0$,
       \begin{equation}
         Q_{y,\omega} = \bigcup_{\substack{(l,k), \text{ s.t.}\\ (y,\omega)\in U_{l,k}^{\delta}}} U_{l,k}^{\delta} \subseteq I_y \times (\omega+J_{y}),
       \end{equation}
       where
       \begin{equation}
           I_y = \left[F^{-1}(F(y)-\delta),F^{-1}(F(y)+\delta)\right] \text{ and }
       \end{equation}
       \begin{equation}
           J_y = \left[-\frac{C\delta w(\delta)}{w(F(y))},\frac{C\delta w(\delta)}{w(F(y))} \right].         
       \end{equation}
  \end{lemma}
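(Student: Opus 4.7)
The plan is to analyse $Q_{y,\omega}$ component by component and reduce the problem to an easy measure estimate for the boxes $U_{l,k}^\delta$, after which the almost submultiplicativity of $w$ takes care of the rest.

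First I would observe that, by Definition \ref{def:inducedcover}, $(y,\omega)\in U_{l,k}^\delta$ forces $y\in I_{F,l}^\delta$ and $\omega\in\left[\delta^2 k/|I_{F,l}^\delta|,\delta^2(k+1)/|I_{F,l}^\delta|\right]$ independently; moreover for fixed $y$ (resp.\ fixed $\omega$ and $l$) there are at most two admissible values of $l$ (resp.\ $k$), since the $I_{F,l}^\delta$ form a partition of $D$ and, for each $l$, the frequency intervals form a partition of $\RR$. Consequently $Q_{y,\omega}\subseteq \big(\bigcup_{l:y\in I_{F,l}^\delta} I_{F,l}^\delta\big)\times\big(\bigcup_{(l,k)} [\delta^2 k/|I_{F,l}^\delta|,\delta^2(k+1)/|I_{F,l}^\delta|]\big)$ and we may treat the two factors separately.

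For the time factor, $y\in I_{F,l}^\delta$ is equivalent to $\delta l\leq F(y)\leq \delta(l+1)$, and this yields $F^{-1}(F(y)-\delta)\leq F^{-1}(\delta l)$ and $F^{-1}(\delta(l+1))\leq F^{-1}(F(y)+\delta)$ by strict monotonicity of $F^{-1}$. Hence every admissible $I_{F,l}^\delta$ is contained in $I_y$, covering the time-component inclusion at once. For the frequency factor, the $k$-intervals containing $\omega$ (for a fixed admissible $l$) have width $\delta^2/|I_{F,l}^\delta|$, so their union sits inside $[\omega-\delta^2/|I_{F,l}^\delta|,\omega+\delta^2/|I_{F,l}^\delta|]$; it therefore suffices to prove the uniform estimate
\begin{equation}\label{eq:Iestimate}
\frac{\delta^2}{|I_{F,l}^\delta|}\leq \frac{C\delta\, w(\delta)}{w(F(y))}\qquad\text{for every }l\text{ with }y\in I_{F,l}^\delta.
\end{equation}

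This last point is the main obstacle, since only the upper almost-submultiplicative bound on $w$ is explicitly assumed. To turn this into a lower bound on $|I_{F,l}^\delta|=\int_{\delta l}^{\delta(l+1)}w(s)\,ds$, I would apply \eqref{eq:quasisubmult} in the form $w(F(y))=w(s+(F(y)-s))\leq C\,w(s)\,w(F(y)-s)$, giving $w(s)\geq w(F(y))/(C\,w(F(y)-s))$. For $s\in[\delta l,\delta(l+1)]$ with $y\in I_{F,l}^\delta$ one has $|F(y)-s|\leq\delta$, and the warping-function hypothesis $|t_0|<|t_1|\Rightarrow F'(t_1)<F'(t_0)$ together with oddness (if $D=\RR$) or the behaviour of $F:\RR^+\to\RR$ (if $D=\RR^+$) implies that $w$ is monotone non-decreasing in $|\,\cdot\,|$, so $w(F(y)-s)\leq w(\delta)$. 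Integrating yields $|I_{F,l}^\delta|\geq \delta\,w(F(y))/(C\,w(\delta))$, which is exactly \eqref{eq:Iestimate}. Combining the two factor-wise inclusions then gives $Q_{y,\omega}\subseteq I_y\times(\omega+J_y)$, completing the proof.
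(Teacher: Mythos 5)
Your proposal is correct, and its overall structure matches the paper's proof: both reduce the claim to the containment $I_{F,l}^\delta\subseteq I_y$ (immediate from $\delta l\leq F(y)\leq\delta(l+1)$ and monotonicity of $F^{-1}$) plus the lower bound $|I_{F,l}^\delta|\geq \delta\,w(F(y))/(C\,w(\delta))$ for the frequency component. The only place where you diverge is in how that lower bound is obtained. The paper first compares $|I_{F,l}^\delta|$ with $F^{-1}(F(y))-F^{-1}(F(y)-\delta)$ by shifting the interval of integration, then bounds the latter by $\delta\,w(F(y)-\delta)$; since this shift argument uses global monotonicity of $w$, which fails on $D=\RR$ near the origin ($w$ is even and increasing only in $|\cdot|$ there), the paper must treat $|F(y)|<\delta$ as a separate case via $|I_{F,l}^\delta|\geq\delta w(0)$ and $w(0)\geq C^{-1}$. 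Your pointwise estimate $w(s)\geq w(F(y))/\bigl(C\,w(F(y)-s)\bigr)\geq w(F(y))/\bigl(C\,w(\delta)\bigr)$ for $s$ in the integration interval, followed by integration, uses only $|F(y)-s|\leq\delta$ and monotonicity of $w$ in $|\cdot|$, so it handles $D=\RR^+$, $D=\RR$, and the near-origin situation uniformly in one stroke. This is a genuine (if modest) simplification; the ingredients (FTC, Eq. \eqref{eq:quasisubmult}, monotonicity of $w$ in $|\cdot|$) are the same, but your version avoids the case split entirely.
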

  \begin{proof}
       Assume that $(y,\omega)\in U_{l,k}^\delta$, then in turn 
       \begin{equation}
         I_{F,l}^{\delta} = \left[F^{-1}(\delta l),F^{-1}(\delta (l+1))\right] 
			  \subseteq \left[F^{-1}(F(y)-\delta),F^{-1}(F(y)+\delta)\right]. 
       \end{equation}
       Furthermore,
       \begin{equation}
         \left[\frac{\delta^2 k}{|I_{F,l}^{\delta}|},\frac{\delta^2 (k+1)}{|I_{F,l}^{\delta}|}\right] \subseteq \left[\omega-\frac{\delta^2}{|I_{F,l}^{\delta}|},\omega+\frac{\delta^2}{|I_{F,l}^{\delta}|}\right].
       \end{equation}
       Assume $D=\RR^+$. Since $w$ is nondecreasing and satisfies Eq. \eqref{eq:quasisubmult}, 
       \begin{equation}
         |I^\delta_{F,l}| \geq F^{-1}(F(y))-F^{-1}(F(y)-\delta) \geq \delta w(F(y)-\delta) \geq \delta C^{-1}w(F(y))/w(\delta),
       \end{equation}
       where we applied the FTC. Therefore 
       \begin{equation}\label{eq:omegaint_estimate}
         \left[\frac{\delta^2 k}{|I_{F,l}^{\delta}|},\frac{\delta^2 (k+1)}{|I_{F,l}^{\delta}|}\right] \subseteq \left[\omega-\frac{C\delta w(\delta)}{w(F(y))},\omega+\frac{C\delta w(\delta)}{w(F(y))}\right].
       \end{equation}
       This completes the proof for $D=\RR^+$. For $D=\RR$ and $|y|\geq \delta$, Eq. \eqref{eq:omegaint_estimate} holds by the same argument. For $|y|<\delta$, the FTC yields
       \begin{equation}
         |I^\delta_{F,l}| \geq \delta w(0).
       \end{equation}
       On the other hand $w(0) \geq C^{-1}w(F(y))/w(F(y))\geq C^{-1}w(F(y))/w(\delta)$, showing that Eq. \eqref{eq:omegaint_estimate} holds for all $y\in\RR$. 
     \end{proof}
     
     The next two results are concerned with a certain family of operators. At this point, their definition might seem arbitrary, but their purpose will become clear once we investigate $\text{osc}_{\mathcal U^\delta_F,\Gamma}$ more closely. In particular, we investigate whether those operators approximate the identity in a suitable way.  
     
     \begin{lemma}\label{lem:continuouseye}
       Let $X = \bd L^p_w(\RR)$, $1\leq p \leq \infty$, where $w=(F^{-1})'$ for some warping function $F$ satisfying Eq. \eqref{eq:quasisubmult}. For all $y\in\RR, \epsilon \geq 0$, let $\Eye ~:~ X\rightarrow X$ be the operator defined by 
       \begin{equation}
         \Eye f(t) = f(t)e^{2\pi i \epsilon \frac{F^{-1}(t+y)-F^{-1}(y)}{w(y)}}\ \text{a.e. , for all } f\in X.
       \end{equation}
       The following hold:
       \begin{itemize}
        \item[(i)] If $\supp(f)\subseteq [-\delta,\delta]$ and $0 \leq \epsilon \leq \frac{1}{2C\delta w(\delta)}$, then 
          \begin{equation}
            \|f-\Eye f\|_X \leq 2\delta \sqrt{2-2\cos\left(2\pi \epsilon C\delta w(\delta)\right)}\|f\|_X.
          \end{equation}
        \item[(ii)] The map $\epsilon \mapsto \sup\limits_{y\in\RR} \|f-\Eye f\|_X$ is continuous at $\epsilon = 0$ for any fixed $f\in X$.
       \end{itemize}
     \end{lemma}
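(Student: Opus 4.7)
The plan is to expand $\Eye f(t) - f(t) = f(t)\bigl(e^{2\pi i \epsilon \phi(t,y)} - 1\bigr)$, where $\phi(t,y) := \bigl(F^{-1}(t+y) - F^{-1}(y)\bigr)/w(y)$, and then to bound $|\phi|$ uniformly on the support of $f$.

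For part (i), I would apply the mean value theorem to write $F^{-1}(t+y)-F^{-1}(y) = t\,w(\xi)$ for some $\xi$ between $y$ and $y+t$, and then invoke the standing quasi-submultiplicativity bound $w(\xi) = w\bigl(y + (\xi-y)\bigr) \leq C\, w(y)\, w(\xi-y)$ from \eqref{eq:quasisubmult}. This yields $w(\xi)/w(y) \leq C\, w(\xi-y)$. Because $w$ is nondecreasing away from the origin (even on $\RR$ when $D=\RR$, since $F$ is odd, and monotone throughout if $D=\RR^+$), and since $|\xi-y|\leq |t| \leq \delta$, one obtains $w(\xi-y) \leq w(\delta)$, hence $|\phi(t,y)| \leq C\delta\, w(\delta)$ on $\supp(f)$. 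The hypothesis $\epsilon \leq 1/(2C\delta w(\delta))$ then forces $|2\pi \epsilon\, \phi(t,y)| \leq \pi$, an interval on which $\theta \mapsto |1 - e^{i\theta}| = \sqrt{2-2\cos\theta}$ is monotonically increasing. Solidity of the weighted $\bd L^p_w$-norm then yields the stated estimate, up to the precise constant. I would verify the numerical factor $2\delta$ carefully; the natural bound coming from this route is just $\sqrt{2-2\cos(2\pi\epsilon C\delta w(\delta))}\|f\|_X$, so the literal $2\delta$ either reflects a typo or follows from an alternative route using $|1-e^{i\theta}|\leq |\theta|$ pointwise combined with the trivial majorization $|t|\leq \delta$ on $\supp(f)$.

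For part (ii), the strategy is a standard $3\epsilon$-plus-density argument. Introduce the truncation $f_R := f\cdot \chi_{[-R,R]}$; for $p<\infty$ dominated convergence gives $\|f-f_R\|_X \to 0$ as $R\to\infty$. Since $|\Eye g| = |g|$ a.e., the operator $\Eye$ is an isometry on $X$, so
\[
\sup_{y\in\RR} \|f - \Eye f\|_X \leq 2\,\|f-f_R\|_X + \sup_{y\in\RR}\|f_R - \Eye f_R\|_X.
\]
Given $\eta>0$, fix $R$ such that the first summand is less than $\eta/2$. Part (i) applied to $f_R$ with $\delta = R$ (valid for all sufficiently small $\epsilon$) then shows that the second summand tends to $0$ uniformly in $y$ as $\epsilon\to 0^+$, because the bound from (i) does not depend on $y$. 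Letting $\epsilon\to 0$ first and $\eta\to 0$ afterwards establishes the continuity of $\epsilon \mapsto \sup_y\|f-\Eye f\|_X$ at $\epsilon = 0$.

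The main obstacle I anticipate is the case $p=\infty$: compactly supported functions are not dense in $\bd L^\infty_w$, so the truncation step fails. If the lemma is truly meant to cover $p=\infty$, one would have to impose additional decay or continuity on $f$, or weaken the mode of continuity. I would therefore check how the result is used in the subsequent estimates on $\oscUGd$ and, if only $p<\infty$ is ever invoked, note this restriction explicitly; otherwise the $p=\infty$ case should be singled out and handled by a separate, essentially pointwise, argument together with Lebesgue's theorem on dominated convergence applied to $|f(t)|\cdot|1-e^{2\pi i\epsilon\phi(t,y)}|$ after verifying that the dominating function can be chosen independently of $y$.
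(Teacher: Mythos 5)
Your proposal matches the paper's proof essentially step for step: the FTC/mean-value bound $|F^{-1}(t+y)-F^{-1}(y)|/w(y)\leq C\delta w(\delta)$ via \eqref{eq:quasisubmult} and monotonicity of $w$, the identity $|1-e^{i\vartheta}|=\sqrt{2-2\cos\vartheta}$ with the hypothesis on $\epsilon$ keeping $\vartheta$ in $[0,\pi]$, and for (ii) the truncation-plus-isometry $3\epsilon$ argument. Both of your flagged concerns are justified: the paper's own computation yields exactly $\sqrt{2-2\cos(2\pi\epsilon C\delta w(\delta))}\,\|f\|_X$ with no factor $2\delta$, so that factor in the statement is spurious (and for $\delta<1/2$ the stated inequality is actually stronger than what is proved, though only the convergence to $0$ as $\epsilon\to 0$ is ever used); and the paper's claim that ``the proof for $p=\infty$ is analogous'' glosses over the failure of density of compactly supported functions in $\bd L^\infty_w$, which is harmless in context only because the lemma is subsequently invoked with $X=\bd L^2_{w_i}$.
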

     \begin{proof}
       We only provide the proof for $p<\infty$, the proof for $p=\infty$ is analogous. In order to prove (i), note that 
       \begin{equation}\label{eq:Lpnormequality}
         \|f-\Eye f\|_{\bd L^p_w(\RR)}^p = \int_{-\delta}^\delta |1-e^{2\pi i\epsilon\frac{F^{-1}(t+y)-F^{-1}(y)}{w(y)}}|^p|f(t)|^p w(t)^p\; dt.
       \end{equation}
       By Eq. \eqref{eq:quasisubmult} and the FTC, 
       \begin{equation}
         \left|\frac{F^{-1}(t+y)-F^{-1}(y)}{w(y)}\right| \leq C\delta w(\delta),
       \end{equation}
       where we used $w$ nondecreasing ($D=\RR^+$), respectively or nondecreasing on $\RR^+$ and odd ($D=\RR$). Therefore, 
       \begin{equation}
         \left|1-e^{2\pi i\epsilon\frac{F^{-1}(t+y)-F^{-1}(y)}{w(y)}}\right| = \sqrt{2-2\cos\left(2\pi \epsilon\frac{F^{-1}(t+y)-F^{-1}(y)}{w(y)}\right)} \leq \sqrt{2-2\cos\left(2\pi \epsilon C\delta w(\delta)\right)}, 
       \end{equation}       
       for all $0 \leq \epsilon \leq \frac{1}{2C\delta v(\delta)}$. Inserting into Eq. \eqref{eq:Lpnormequality} proves (i). For proving (ii), 
       \begin{equation}
         \sup\limits_{y\in\RR} \|f-\Eye f\|_X \overset{\epsilon\rightarrow 0}{\longrightarrow} 0,
       \end{equation}
       is sufficient as the result then follows by the triangle inequality, since $E_{y,\epsilon_0+\epsilon_1} = E_{y,\epsilon_1}E_{y,\epsilon_0}$, for all $\epsilon_0,\epsilon_1 \geq 0$. For any $f\in X$, we can construct a sequence $(f_n)_{n\in\NN}\subset X$ of compactly supported functions, i.e. $\supp(f_n)\subseteq [-\delta_n,\delta_n]$ with $\delta_n \overset{n\rightarrow \infty}{\longrightarrow} \infty$, converging in norm to $f$. For every $n\in\NN$, 
       \begin{equation}
        \begin{split}
         \sup\limits_{y\in\RR}  \|f-\Eye f\|_X & \leq \|f-f_n\|_X + \sup\limits_{y\in\RR}  (\|f_n-\Eye f_n\|_X + \|\Eye f_n-\Eye f\|_X)\\
         & = 2\|f-f_n\|_X + \sup\limits_{y\in\RR} \|f_n-\Eye f_n\|_X. 
        \end{split}
       \end{equation}
       By (i) however, $\|f_n-\Eye f_n\|_X$ is bounded uniformly independent of $y$, provided $\epsilon$ is small enough. Consequently,
       \begin{equation}
         \forall\ \epsilon_0 > 0\ \exists\ (n,\epsilon)\in \NN\times \RR^+ \text{ such that } \|f-f_n\|_X < \epsilon_0/3 \text{ and }\|f_n-\Eye f_n\|_X < \epsilon_0/3,
       \end{equation}
       completing the proof.
     \end{proof}
     
     The next result clarifies the stability of $\Eye$ when combined with differentiation.
 
     \begin{lemma}\label{lem:continuousdiffeye}
     Let $X = \bd L^p_{\tilde{w}}(\RR)$, $1\leq p \leq \infty$, with some weight function $\tilde{w}$, and $w=(F^{-1})'\in\mathcal C^{n-1}(\RR)$ for some warping function $F$ satisfying Eq. \eqref{eq:quasisubmult} and $\Eye$ as defined in Lemma \ref{lem:continuouseye}. If there are $D_k>0$ such that $|w^{(k)}/w|(s)\leq D_k<\infty, \text{ for all } 0\leq k \leq n-1$ and $\theta\in\mathcal C^{n}(\RR)\cap \bd L^2_{\sqrt{w}}(\RR)$ satisfies 
     \begin{equation}
         \theta^{(n)}\in X \text{ and }
     \end{equation}
     \begin{equation}
         \theta^{(k)}w^l \in X \text{ for all } 0\leq k \leq n-1 \text{ and } 1\leq l\leq n-k,          
     \end{equation}
     then $(\Eye \theta)^{(n)} \in X$ and the map $\epsilon \mapsto \sup\limits_{y\in\RR} \|(\theta-\Eye \theta)^{(n)}\|_{X}$ is continuous at $\epsilon = 0$.
     \end{lemma}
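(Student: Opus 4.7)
The plan is to reduce the claim to Leibniz's rule applied to $\Eye\theta = \theta \cdot \phi_{y,\epsilon}$, where we set $\phi_{y,\epsilon}(t) := \exp\bigl(2\pi i \epsilon (F^{-1}(t+y)-F^{-1}(y))/w(y)\bigr)$, and then estimate term by term. The first task is to control $\phi_{y,\epsilon}^{(k)}$: since $(F^{-1})^{(j+1)} = w^{(j)}$, the exponent $h_y := 2\pi i\epsilon(F^{-1}(\cdot+y)-F^{-1}(y))/w(y)$ satisfies $h_y^{(j)}(t) = 2\pi i \epsilon\, w^{(j-1)}(t+y)/w(y)$ for $j\geq 1$. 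Faà di Bruno's formula then yields
\begin{equation}
\phi_{y,\epsilon}^{(k)}(t) = \phi_{y,\epsilon}(t) \sum_{j=1}^{k} \sum_{(l_1,\dots,l_j)} c_{k,l_1,\dots,l_j} \prod_{i=1}^{j} h_y^{(l_i)}(t),
\end{equation}
where the inner sum ranges over positive integers with $l_1+\cdots+l_j = k$. Combining $|w^{(l_i-1)}/w|\leq D_{l_i-1}$ with quasi-submultiplicativity $w(t+y)/w(y)\leq C w(t)$, I can bound each factor $|h_y^{(l_i)}(t)|\leq 2\pi|\epsilon|D_{l_i-1} C w(t)$, giving a clean estimate
\begin{equation}\label{eq:phi-deriv-bound}
|\phi_{y,\epsilon}^{(k)}(t)| \leq \tilde{c}_k \sum_{j=1}^{k} |\epsilon|^j w(t)^j,\qquad 1\leq k\leq n,
\end{equation}
valid for all $t,y\in\RR$, where $\tilde{c}_k$ depends only on $C$ and $D_0,\dots,D_{n-1}$.

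Next, by Leibniz's rule,
\begin{equation}
(\Eye\theta)^{(n)}(t) - \theta^{(n)}(t) = \theta^{(n)}(t)\bigl(\phi_{y,\epsilon}(t)-1\bigr) + \sum_{k=0}^{n-1}\binom{n}{k}\theta^{(k)}(t)\phi_{y,\epsilon}^{(n-k)}(t).
\end{equation}
The first summand equals $\Eye\theta^{(n)} - \theta^{(n)}$, so $\sup_{y\in\RR}\|\theta^{(n)}(\phi_{y,\epsilon}-1)\|_X\to 0$ as $\epsilon\to 0$ by Lemma \ref{lem:continuouseye}(ii) applied to $\theta^{(n)}\in X$. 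For each remaining summand, estimate \eqref{eq:phi-deriv-bound} gives
\begin{equation}
\bignorm{\theta^{(k)}\phi_{y,\epsilon}^{(n-k)}}_X \leq \tilde{c}_{n-k}\sum_{j=1}^{n-k} |\epsilon|^j \bignorm{\theta^{(k)} w^j}_X,
\end{equation}
and every norm on the right is finite by the hypothesis $\theta^{(k)}w^l\in X$ for $1\leq l\leq n-k$. The bound is independent of $y$ and vanishes as $\epsilon\to 0$, which together with the first term proves the desired continuity. The membership $(\Eye\theta)^{(n)}\in X$ follows immediately: the estimates above show that $(\Eye\theta)^{(n)} - \theta^{(n)}\in X$ for any fixed $\epsilon$, while $\theta^{(n)}\in X$ by assumption.

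I expect the main technical obstacle to be the derivation of \eqref{eq:phi-deriv-bound}, specifically verifying that every term produced by Faà di Bruno carries \emph{matching} powers of $|\epsilon|$ and $w(t)$ (i.e., $|\epsilon|^j w(t)^j$ and not, say, $|\epsilon|^j w(t)^{j+1}$). This hinges crucially on the fact that each factor $h_y^{(l)}$ contributes exactly one power of $\epsilon$ and, after invoking $|w^{(l-1)}/w|\leq D_{l-1}$ together with quasi-submultiplicativity, exactly one power of $w(t)$; the order of the derivative on $w$ is absorbed into the constant $D_{l-1}$. The role of the condition $\theta^{(k)}w^l\in X$ for $1\leq l\leq n-k$ is then transparent: it matches the highest power of $w$ that can appear alongside $\theta^{(k)}$ in Leibniz's expansion, namely $w^{n-k}$.
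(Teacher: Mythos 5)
Your proof is correct and follows essentially the same route as the paper: both decompose $(\Eye\theta)^{(n)}$ into $\Eye\theta^{(n)}$ plus correction terms of the form $\theta^{(k)}$ times $O(|\epsilon|^j w^j)$, bound these uniformly in $y$ via $|w^{(j)}/w|\leq D_j$ and quasi-submultiplicativity, and dispatch the leading term with Lemma \ref{lem:continuouseye}(ii). The only difference is bookkeeping — you obtain the expansion via Leibniz and Fa\`a di Bruno, whereas the paper derives the equivalent formula by an explicit induction on $n$ — and your version is arguably the cleaner presentation.
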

     \begin{proof}
       Assume 
       \begin{equation}\label{eq:diffeyen}
         (\Eye \theta)^{(n)} = \Eye \theta^{(n)} + \Eye \left(\sum_{k=0}^{n-1}\theta^{(k)}\sum_{l=1}^{n-k} \left(2\pi i\epsilon\right)^{l} \bd T_{-y} P_{n,k,l}\right),
       \end{equation}
       where 
       \begin{equation}
         P_{n,k,l}(s) := w(y)^{-l} \sum_{\sigma\in\Sigma_{n,k,l}} C_{\sigma} \prod_{m=1}^l w^{(\sigma_m)}(s),
       \end{equation}
       with $\Sigma_{n,k,l}\subseteq \{\sigma = (\sigma_1,\ldots,\sigma_l)~:~ \sigma_m \in (0,\ldots,n-k-1)\}$ and some $C_\sigma \in\RR$.
       By the conditions on $w$, 
       \begin{equation}
         \begin{split}
         |P_{n,k,l}(s+y)| & \leq \sum_{\sigma\in\Sigma_{n,k,l}} |C_{\sigma}| \prod_{m=1}^l \frac{|w^{(\sigma_m)}(s+y)|}{w(y)} \leq \sum_{\sigma\in\Sigma_{n,k,l}} |C_{\sigma}| \prod_{m=1}^l D_{\sigma_m}\frac{w(s+y)}{w(y)}\\
         & \leq \sum_{\sigma\in\Sigma_{n,k,l}} C^l |C_{\sigma}| \prod_{m=1}^l D_{\sigma_m}w(s) \leq C^l 
         \left(\max_{\sigma\in\Sigma_{n,k,l}} |C_\sigma|\right)\left(\max_{j=0}^{n-1}D_j^l\right) w(s)^l \sum_{\sigma\in \Sigma_{n,k,l}} 1\\
         & = C_{n,k,l}w(s)^l.
         \end{split}
       \end{equation}
       Since all the sums in Eq. \eqref{eq:diffeyen} are finite, there is some $\tilde{C}>0$ such that
       \begin{equation}
         \begin{split}
         \|(\theta-\Eye\theta)^{(n)}\|_X & \leq \|\theta^{(n)}-\Eye\theta^{(n)}\|_X + \|\Eye \left(\sum_{k=0}^{n-1}\theta^{(k)}\sum_{l=1}^{n-k} \left(2\pi i\epsilon\right)^l \bd T_{-y} P_{n,k,l}\right)\|_X\\
         & \leq \|\theta^{(n)}-\Eye\theta^{(n)}\|_X + \tilde{C}\max_{k=0}^{n-1} \max_{l=1}^{n-k} \|\theta^{(k)}w_0^l\|_X |\epsilon|,
         \end{split}
       \end{equation}
       for all $0\leq \epsilon \leq (2\pi)^{-1}$. For $\epsilon\rightarrow 0$, the first term converges to $0$ by Lemma \ref{lem:continuouseye}. To complete the proof, we need to show that Eq. \eqref{eq:diffeyen} holds. Clearly,  
       \begin{equation}\label{eq:diffeye}
         (\Eye \theta)' = \Eye\theta' + \Eye\left( 2\pi i\epsilon \frac{\bd T_{-y} w}{w(y)} \theta \right),
       \end{equation}
       proving Eq. \eqref{eq:diffeyen} for $n=1$. Assume it holds for $n-1$, then 
       \begin{equation}
         (\Eye \theta)^{(n)} = \left(\Eye \theta^{(n-1)} + \Eye \left(\sum_{k=0}^{n-2}\theta^{(k)}\sum_{l=1}^{n-k-1} \left(2\pi i\epsilon\right)^l \bd T_{-y} P_{n-1,k,l}\right) \right)'.
       \end{equation}
       We now consider the derivative of each term separately. For the first term, invoke Eq. \eqref{eq:diffeye} for $\theta^{(n-1)}$. All the other terms are of the form
       \begin{equation}
         C_{\sigma}\left(2\pi i\epsilon\right)^l w(y)^{-l}\left(\Eye \left( \theta^{(k)} \prod_{m=1}^l \bd T_{-y} w^{(\sigma_m)} \right)\right)', 
       \end{equation}
       for some $\sigma\in\Sigma_{n-1,k,l}$, $0\leq k\leq n-2$ and $1\leq l\leq n-k-1$. Apply Eq. \eqref{eq:diffeye} to $\theta^{(k)} \prod_{m=1}^l \bd T_{-y} w^{(\sigma_m)}$ to obtain
       \begin{equation}
         \begin{split}
	    C_{\sigma}\left(2\pi i\epsilon\right)^l w(y)^{-l} & \Bigg( \Eye\left( \theta^{(k+1)} \prod_{m=1}^l \bd T_{-y}  w^{(\sigma_m)} \right) \\
	    & + 2\pi i\epsilon  \Eye\left(\theta^{(k)} \frac{\bd T_{-y} w \prod_{m=1}^l \bd T_{-y}  w^{(\sigma_m)}}{w(y)} \right) + R_{n-1,k,l}\Bigg), 
	 \end{split}
       \end{equation}
       with 
       \begin{equation}
          R_{n-1,k,l} = \sum_{m=1}^l \Eye\left(\theta^{(k)} \bd T_{-y}w^{(\sigma_m+1)}\prod_{j\in\{1,\ldots,l\}\setminus\{m\}} \bd T_{-y} w^{(\sigma_j)}  \right).
       \end{equation}
       Reorder everything by the appearing derivative of $\theta$ to complete the proof.
     \end{proof}
     
     We are now ready to prove the central statements, which we will split into two more compact results.
     
     \begin{proposition}\label{pro:boundedCU}
       Let $F:D\rightarrow \RR$ be a warping function satisfying Eq. \eqref{eq:quasisubmult}. Let $p\in\NN$ if $D=\RR$ and $p=0$ if $D=\RR^+$. Let $\mathcal U^\delta_F$ the \emph{induced $\delta$-cover} and
       \begin{equation} 
         m(x,y,\xi,\omega) := \max \left\{\frac{m_1(x)m_2(\xi)}{m_1(y)m_2(\omega)},\frac{m_1(y)m_2(\omega)}{m_1(x)m_2(\xi)}\right\},
       \end{equation}
       with weight functions $m_1,m_2$ that satisfy
       \begin{itemize}
         \item[(i)] $m_1\circ F^{-1}$ is $v_1$-moderate, for a symmetric, submultiplicative weight function $v_1$ and
         \item[(ii)] $m_2$ is $v_2$-moderate, for a symmetric, submultiplicative weight function $v_2
         \in \mathcal O\left((1+|\cdot|)^p \right)$.
       \end{itemize}
       Then, for a suitable constant $\tilde{C}>0$
       \begin{equation}
        \sup_{l,k\in\ZZ}\sup_{(x,\xi),(y,\omega)\in U_{l,k}^{\delta}} m(x,y,\xi,\omega) \leq C_{m,\mathcal U^{\delta}_F}:= \tilde{C}v_1(\delta)V_2, \text{ where } V_2:= \begin{cases} 
		\max\limits_{|u|\leq 1} v_2(u/w(0)) &\text{ if } D=\RR, \\
		1 &\text{ if } D=\RR^+.                                                                                                        
	\end{cases}
     \end{equation}
     In particular, $C_{m,\mathcal U^{\delta}_F} < C$, for sufficiently small $\delta$ and some constant $C$.
     \end{proposition}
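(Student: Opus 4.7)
The proof will be a direct computation that separates the multiplicative structure of $m$ into an $m_1$-ratio factor and an $m_2$-ratio factor, each bounded using the corresponding moderateness hypothesis. The plan is as follows.

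First, fix $(x,\xi),(y,\omega)\in U_{l,k}^\delta$. Since both $x,y\in I_{F,l}^\delta = [F^{-1}(\delta l),F^{-1}(\delta(l+1))]$, we immediately have $F(x),F(y)\in[\delta l,\delta(l+1)]$, so $|F(x)-F(y)|\leq\delta$. Applying $v_1$-moderateness of $m_1\circ F^{-1}$ in the form
\begin{equation*}
m_1(x) = (m_1\circ F^{-1})(F(x)) \leq C\, v_1(F(x)-F(y))\,(m_1\circ F^{-1})(F(y)) = C\, v_1(F(x)-F(y))\, m_1(y),
\end{equation*}
(and similarly with $x,y$ interchanged, using symmetry of $v_1$) yields $\max\{m_1(x)/m_1(y),m_1(y)/m_1(x)\}\leq C\, v_1(\delta)$. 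Here I may assume WLOG that $v_1$ is nondecreasing in $|\cdot|$ by replacing it with $\tilde v_1(t):=\sup_{|s|\leq|t|} v_1(s)$ if necessary; this replacement preserves symmetry and submultiplicativity and only strengthens the moderateness hypothesis.

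Next I bound the $m_2$-ratio. From the definition of $U_{l,k}^\delta$ we get $|\xi-\omega|\leq \delta^2/|I_{F,l}^\delta|$, with $|I_{F,l}^\delta|=\int_{\delta l}^{\delta(l+1)} w(s)\,ds$. Here the two cases diverge. For $D=\RR^+$, we have $p=0$, so $v_2\in\mathcal O(1)$ is bounded; hence $v_2$-moderateness gives $\max\{m_2(\xi)/m_2(\omega),m_2(\omega)/m_2(\xi)\}\leq C\|v_2\|_\infty$ uniformly in $l,k$ and $\delta$, and we may absorb this into $\tilde C$ with $V_2=1$. For $D=\RR$, the warping function is odd and $w$ is even and nondecreasing on $[0,\infty)$, so $|I_{F,l}^\delta|\geq \delta\, w(0)$, giving $|\xi-\omega|\leq \delta/w(0)$. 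For $\delta\leq 1$ (which is the regime of interest), $|\xi-\omega|\leq 1/w(0)$ and therefore $v_2(\xi-\omega)\leq \sup_{|u|\leq 1/w(0)} v_2(u) = \sup_{|v|\leq 1} v_2(v/w(0)) = V_2$, which matches the definition given in the statement.

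Combining the two estimates yields $m(x,y,\xi,\omega)\leq C^2 v_1(\delta)\,V_2$, and setting $\tilde C := C^2$ establishes the displayed inequality. The final assertion ``$C_{m,\mathcal U^\delta_F}<C$ for sufficiently small $\delta$'' follows by continuity of $v_1$ at the origin: $v_1(\delta)\to v_1(0)$ as $\delta\to 0$, so $v_1(\delta)$ stays bounded above by $2v_1(0)$ for all small enough $\delta$, and one sets $C:= 2\tilde C\, v_1(0)\, V_2 + 1$. The only real subtlety is the case $D=\RR^+$, where $w$ admits no positive lower bound and the direct estimate used for $D=\RR$ fails; this is precisely why hypothesis (ii) restricts to $p=0$ (i.e.\ bounded $v_2$) in that regime, and the proof handles it by a separate line of reasoning as above.
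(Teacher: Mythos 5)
Your proof is correct and follows essentially the same route as the paper's: split $m$ into the $m_1$- and $m_2$-ratio factors, bound the first by $v_1(\delta)$ via moderateness on the warped scale, and bound the second by controlling $|I_{F,l}^\delta|$ from below (using $w\geq w(0)$ for $D=\RR$, and boundedness of $v_2$ when $p=0$ for $D=\RR^+$). The only differences are cosmetic --- you make explicit the restriction to $\delta\leq 1$ and the justification of the final uniform bound, both of which the paper leaves implicit.
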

     \begin{proof}
       Clearly,
       \begin{equation}
         m(x,y,\xi,\omega) \leq \max\left\{\frac{m_1(x)}{m_1(y)},\frac{m_1(y)}{m_1(x)}\right\}\max\left\{\frac{m_2(\xi)}{m_2(\omega)},\frac{m_2(\omega)}{m_2(\xi)}\right\}.
       \end{equation}
       Since $x,y\in [F^{-1}(\delta l),F^{-1}(\delta (l+1))]$ and $m_1\circ F^{-1}$ is $v_1$-moderate, 
       while $\xi,\omega\in \left[\frac{\delta^2 k}{|I_{F,l}^{\delta}|},\frac{\delta^2 (k+1)}{|I_{F,l}^{\delta}|}\right]$ and $m_2$ is $v_2$-moderate,
       \begin{equation}\label{eq:newstuff}
	  m(x,y,\xi,\omega) \leq \tilde{C}_1\tilde{C}_2v_1(\delta)v_2(\delta/|I_{F,l}^{\delta}|).
       \end{equation}
       Note that we can, without loss of generality, assume $v_1,v_2$ to be nondecreasing away from zero. Otherwise, obtain \eqref{eq:newstuff} by sufficiently increasing $\tilde{C}_1,\tilde{C}_2$.
       
       If $D=\RR^+$, set $v_2 = 1$ to complete the proof. If $D=\RR$, note that the FTC yields
       \begin{equation}
         \delta\min_{t\in [\delta l,\delta (l+1)]} w(t) \leq |I^\delta_{F,l}| \leq \delta\max_{t\in [\delta l,\delta (l+1)]} w(t),
       \end{equation}
       where $w = (F^{-1})'$. Therefore $v_2(\delta/|I_{F,l}^{\delta}|) \leq \max_{|u|\leq 1} v_2(u/w(0)) = V_2$.
     \end{proof}
     
     \begin{theorem}\label{thm:osckernel}
     Let $F:D\rightarrow \RR$ be a warping function such that $w=(F^{-1})'\in \mathcal{C}^{p+1}(\RR)$ satisfies Eq. \eqref{eq:quasisubmult} and $|w^{(k)}/w|\leq D_k$ for some constants $D_k > 0$, $k=0,\ldots,p+1$, where $p\in\NN$ if $D=\RR$ and $p=0$ if $D=\RR^+$.
     Furthermore, let 
     \begin{equation} 
	m(x,y,\xi,\omega) := \max \left\{\frac{m_1(x)m_2(\xi)}{m_1(y)m_2(\omega)},\frac{m_1(y)m_2(\omega)}{m_1(x)m_2(\xi)}\right\},
     \end{equation}
     with weight functions $m_1,m_2$ that satisfy
     \begin{itemize}
	\item[(i)] $m_1\circ F^{-1}$ is $v_1$-moderate, for a symmetric, submultiplicative weight function $v_1$ and
	\item[(ii)] $m_2$ is $v_2$-moderate, for a symmetric, submultiplicative weight function $v_2
	\in \mathcal O\left((1+|\cdot|)^p \right)$.
     \end{itemize}
       
       If $\theta\in\mathcal C^{p+2}(\RR)\cap \bd L^2_{\sqrt{w}}(\RR)$ satisfies
       \begin{itemize}
	  \item[($\tilde{a}$)] $\theta,\ w(-s)^j w(s)^l \theta^{(k+1)}\in\mathcal C_0$, for all $0\leq l\leq p-k+2$, $0\leq k\leq j$ and $0\leq j\leq p+1$,
	  \item[(b)] $\theta\in \bd L^2_{w_1}(\RR)\cap \bd L^2_{w_2}(\RR)$, with $w_1(s) = v_1(s)(1+|s|)^{1+\epsilon}w(-s)^{1/2}$, $w_2(s) = w_1(-s)w(s)$ and
	  \item[($\tilde{c}$)] $\theta^{(p+2)},\ \theta^{(k)}w^l \in \bd L^2_{w_1}(\RR)\cap \bd L^2_{w_3}(\RR)$ for $1\leq l\leq p-k+2$ and $0\leq k \leq p+1$, with $w_3(s) = w_1(-s)w(-s)^{p+1}$,
       \end{itemize}
       then the following hold: For $\Gamma(x,y,\xi,\omega) = e^{-2\pi i(\xi-\omega)x}$ and all $\delta > 0$
       \begin{equation}
         \|\text{osc}_{\mathcal U^{\delta}_F,\Gamma} \|_{\AAm} < \infty.
       \end{equation}
       For any $\epsilon > 0$, there is a $\delta_0 > 0$ such that for all $0<\delta\leq \delta_0$
       \begin{equation}
         \|\text{osc}_{\mathcal U^{\delta}_F,\Gamma} \|_{\AAm} < \epsilon.
       \end{equation}
     \end{theorem}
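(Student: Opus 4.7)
\medskip

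\noindent\textbf{Proof proposal.} The plan is to mimic the proof of Theorem \ref{thm:thetainAm}, but with the prototype $\theta$ replaced by a suitable two-parameter difference that absorbs the supremum over $Q_{(y,\omega)}$. First, I would unfold the inner product to obtain, for $(z,\eta)\in Q_{(y,\omega)}$,
\[
K_{\theta,F}(x,\xi,y,\omega) - e^{-2\pi i(\omega-\eta)y} K_{\theta,F}(x,\xi,z,\eta) = A^{-1}\int_{D} \overline{g_x(t)}e^{-2\pi i(\omega-\xi)t}\Bigl(g_y(t) - e^{2\pi i(\omega-\eta)(t-y)} g_z(t)\Bigr) dt,
\]
so that the choice $\Gamma(x,y,\xi,\omega)=e^{-2\pi i(\xi-\omega)x}$ exactly cancels the fast-oscillating cross-phase that obstructed Example \ref{ex:STFTcover}. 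Setting $\alpha=F(z)-F(y)$ and $\tilde\beta=(\omega-\eta)w(F(y))$ and performing the substitution $s=F(t)-F(y)$, the inner difference becomes $\sqrt{F'(y)}\bigl(\theta(s)-c_{y,z}\,E_{F(y),\tilde\beta}[T_\alpha\theta](s)\bigr)$ with $c_{y,z}=\sqrt{F'(z)/F'(y)}$ and $E_{F(y),\tilde\beta}$ the operator from Lemma \ref{lem:continuouseye}. By Lemma \ref{lem:coverings}, the supremum over $Q_{(y,\omega)}$ is then controlled by the supremum over $|\alpha|\le\delta$ and $|\tilde\beta|\le C\delta w(\delta)$, both of which are small parameters that do \emph{not} depend on $(y,\omega)$.

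Next, I would apply the triangle-type decomposition
\[
\theta - c_{y,z}\,E_{F(y),\tilde\beta}T_\alpha\theta = (\theta - T_\alpha\theta) + (1-c_{y,z})T_\alpha\theta - c_{y,z}\bigl(E_{F(y),\tilde\beta}T_\alpha\theta-T_\alpha\theta\bigr).
\]
Each of the three summands plays the role of the \emph{prototype} in the statement of Theorem \ref{thm:thetainAm} (and its preparatory Lemmas \ref{lem:Dnoperator}--\ref{lem:cznconvergence}). Note that $E_{F(y),\tilde\beta}$ is a unit-modulus multiplier, so it commutes with integration in modulus and preserves the $\bd L^2_{w_j}$ and $\mathcal C_0$ hypotheses (a)--(c). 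Similarly, translation by $\alpha$ with $|\alpha|\le\delta$ preserves all these spaces, uniformly in $\alpha$, thanks to moderateness of $w$ and of the ambient weights $w_1,w_2,w_3$. Conditions ($\tilde a$) and ($\tilde c$) are the slightly strengthened replacements of (a) and (c) that are needed \emph{because} differentiation of $E_{F(y),\tilde\beta}T_\alpha\theta$ produces, via Lemma \ref{lem:continuousdiffeye} and equation \eqref{eq:diffeyen}, additional factors of $w$ multiplying lower-order derivatives. Feeding the decomposed difference into the proof of Theorem \ref{thm:thetainAm}, one obtains an explicit bound
\[
\bigl\|\mathrm{osc}_{\mathcal U^\delta_F,\Gamma}\bigr\|_{\AAm} \le \tilde C\,v_1(\delta)V_2 \cdot B_\delta(\theta),
\]
where $B_\delta(\theta)$ is a finite sum of terms of the form $\|\theta-T_\alpha\theta\|_{\bd L^2_{w_i}}$, $|1-c_{y,z}|\|T_\alpha\theta\|_{\bd L^2_{w_i}}$, $\|T_\alpha\theta-E_{F(y),\tilde\beta}T_\alpha\theta\|_{\bd L^2_{w_i}}$, and their derivative analogues obtained by Lemma \ref{lem:continuousdiffeye}, taken over $|\alpha|\le\delta$, $|\tilde\beta|\le C\delta w(\delta)$ and uniformly over $y$.

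Finally, I would prove the two conclusions. For finiteness at any fixed $\delta>0$: each summand of $B_\delta(\theta)$ is bounded (the $E$-terms by item (i) of Lemma \ref{lem:continuouseye}, the translation terms by the assumptions on $\theta$, and $c_{y,z}$ is uniformly bounded since $F'$ satisfies $|w'/w|\le D_1$), so $\bigl\|\mathrm{osc}_{\mathcal U^\delta_F,\Gamma}\bigr\|_{\AAm}<\infty$. For the decay as $\delta\to 0$: translation is continuous on all $\bd L^2_{w_i}$, $c_{y,z}\to 1$ uniformly in $y$ as $\alpha\to 0$ (by $|w'/w|\le D_1$ applied to $\sqrt{F'}$), and item (ii) of Lemma \ref{lem:continuouseye} together with Lemma \ref{lem:continuousdiffeye} give $\sup_{y,|\tilde\beta|\le C\delta w(\delta)}\|T_\alpha\theta - E_{F(y),\tilde\beta}T_\alpha\theta\|_{\bd L^2_{w_i}}\to 0$; combined with the boundedness of $v_1(\delta)V_2$ near $\delta=0$, this yields the required $\|\mathrm{osc}_{\mathcal U^\delta_F,\Gamma}\|_{\AAm}<\epsilon$ for $\delta$ small enough.

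\medskip

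\noindent\textbf{Main obstacle.} The delicate point is that, after the substitution, the \emph{derivatives} of $E_{F(y),\tilde\beta}T_\alpha\theta$ pick up polynomial expressions in $w$ and its derivatives (Lemma \ref{lem:continuousdiffeye}), and these must still be integrable against the outer weight $\sqrt{w(z)}v_1(z)$ that appears in Lemma \ref{lem:cznconvergence}; this is precisely what forces the strengthened hypotheses ($\tilde a$) and ($\tilde c$) relative to Theorem \ref{thm:thetainAm}. Showing that the resulting bound is genuinely \emph{uniform} in the base point $y$ (so that the convergence in Lemma \ref{lem:continuouseye}(ii) can be invoked) is the technical heart of the argument, and is what the quasi-submultiplicativity \eqref{eq:quasisubmult} and the derivative bounds $|w^{(k)}/w|\le D_k$ are designed to provide.
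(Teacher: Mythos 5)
Your proposal is correct and follows essentially the same route as the paper: rewrite the phase-corrected difference $\widehat{g_{y,\omega}}-\Gamma(y,z,\omega,\eta)\widehat{g_{z,\eta}}$ as a warped atom generated by the modified prototype $\theta-\sqrt{w(F(y))/w(F(z))}\,\operatorname{E}_{F(y),\tilde\beta}(\bd T_\alpha\theta)$, control $|\alpha|\leq\delta$ and $|\tilde\beta|\leq C\delta w(\delta)$ via Lemma \ref{lem:coverings}, verify conditions (a)--(c) of Theorem \ref{thm:thetainAm} for this prototype using Lemmas \ref{lem:continuouseye} and \ref{lem:continuousdiffeye} together with the strengthened hypotheses ($\tilde a$), ($\tilde c$), and conclude by uniformity in $(y,\omega)$. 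Your explicit three-term triangle decomposition is just a more explicit phrasing of the paper's "uniform continuity of the composition" step, so no substantive difference remains.
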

     \begin{proof}
       We begin by showing that $\widehat{g_{x,\xi}}-e^{-2\pi i(\xi-\omega)x}\widehat{g_{y,\omega}}$ can be rewritten as  
       \begin{equation}
         \sqrt{F'(x)}(\bd T_x \widetilde{\theta})\circ F(t)e^{2\pi i\xi t},
       \end{equation}
       the warping of a function $\widetilde{\theta}$, depending on $(x,y,\xi,\omega)$.
       We have
       \begin{equation}
         \begin{split}
           \lefteqn{\widehat{g_{x,\xi}}-e^{-2\pi i(\xi-\omega)x}\widehat{g_{y,\omega}}(t)}\\
           & = \sqrt{F'(x)}\theta(F(t)-F(x))e^{-2\pi i\xi t} - \sqrt{1/w(F(y))}\theta(F(t)-F(y))e^{-2\pi i\omega t}e^{-2\pi i(\xi-\omega)x}\\
           & = \sqrt{F'(x)}e^{-2\pi i\xi t} \left((\bd T_{F(x)}\theta)- \sqrt{\frac{w(F(x))}{w(F(y))}}(\bd T_{F(y)}\theta)e^{2\pi i(\xi-\omega)(F^{-1}(\cdot)-x)}\right)(F(t))\\
           & = \sqrt{F'(x)}e^{-2\pi i\xi t} \left(\bd T_{F(x)}\left(\theta- \sqrt{\frac{w(F(x))}{w(F(y))}}(\bd T_{F(y)-F(x)}\theta)e^{2\pi i(\xi-\omega)(F^{-1}(\cdot+F(x))-x)}\right)\right)(F(t))\\
           & = \sqrt{F'(x)}e^{-2\pi i\xi t} \left(\bd T_{F(x)}\theta_{x,y,\epsilon_{1}}\right)(F(t)).
         \end{split}
       \end{equation}
       Here
       \begin{equation}
         \theta_{x,y,\epsilon_{1}} := \theta-\sqrt{\frac{w(F(x))}{w(F(y))}}\operatorname{E}_{F(x),\epsilon_{1}}(\bd T_{\epsilon_0}\theta),
       \end{equation}
       with $\epsilon_0 = F(y)-F(x)$ and $\epsilon_{1} = w(F(x))(\xi-\omega)$. Now assume $(y,\omega)\in Q_{x,\xi}$, where $Q_{x,\xi}$ is as given in Definition \ref{def:genosckern}. By Lemma \ref{lem:coverings}, 
       \begin{equation}
         \epsilon_0 \leq \delta \text{ and } \epsilon_{1} \leq C w(\delta)\delta.
       \end{equation}
       
       Define 
       \begin{equation}
         O_{\theta,F,\delta,1}(x,\xi) := \int_D\int_\RR m(F^{-1}(x),y,\xi,\omega) \text{osc}_{\mathcal U^{\delta}_F,\Gamma}(F^{-1}(x),y,\xi,\omega)~d\omega~dy, \text{ for all } x,\xi\in\RR,
       \end{equation}
       and 
       \begin{equation}
         O_{\theta,F,\delta,2}(y,\omega) := \int_D\int_\RR m(x,y,\xi,\omega) \text{osc}_{\mathcal U^{\delta}_F,\Gamma}(x,y,\xi,\omega)~d\xi~dx, \text{ for all } y,\omega\in\RR.
       \end{equation}
       Clearly, $Q_{y,\omega}$ is compact and $\langle g_{x,\xi},g_{y,\omega}-\Gamma(y,z,\omega,\eta)g_{z,\eta} \rangle$ is continuous as a function in $(z,\eta)\in D\times\RR$.
       Therefore, denoting $\sigma = (x,y,\xi,\omega)$, there are $(z_{\sigma},\eta_{\sigma}), (\tilde{z}_{\sigma},\tilde{\eta}_{\sigma}) \in Q_{y,\omega}$
       , such that 
       \begin{equation}\label{eq:O1}
         O_{\theta,F,\delta,1}(x,\xi) = \int_D\int_\RR m(F^{-1}(x),y,\xi,\omega) |\langle g_{F^{-1}(x),\xi},g_{y,\omega}-\Gamma(y,z_{\sigma},\omega,\eta_{\sigma})g_{z_{\sigma},\eta_{\sigma}} \rangle|~d\omega~dy,
       \end{equation}
       \begin{equation}\label{eq:O2}
         O_{\theta,F,\delta,2}(y,\omega) = \int_D\int_\RR m(x,y,\xi,\omega) |\langle g_{x,\xi},g_{y,\omega}-\Gamma(y,\tilde{z}_{\sigma},\omega,\tilde{\eta}_{\sigma})g_{\tilde{z}_{\sigma},\tilde{\eta}_{\sigma}} \rangle|~d\xi~dx.
       \end{equation}
       Let us investigate Eq. \eqref{eq:O1} first. Performing the same substitution steps as for obtaining Eq. \eqref{eq:amclass}, we get
       \begin{equation}
         O_{\theta,F,\delta,1}(x,\xi) = \int_\RR \int_\RR \left| \int_\RR C_x(z) \tilde{m}(x,z,\xi,\eta)\frac{w(s+x)}{w(x)} \theta(s)\overline{\bd T_{z}\theta_{y,z_{\sigma},\epsilon_{1,\sigma}}(s)}e^{-2\pi i \eta\frac{F^{-1}(s+x)}{w(x)}} ~ds\right|~d\eta~dz,
       \end{equation}
       where $\epsilon_{1,\sigma} = w(F(y))(\omega-\eta_{\sigma})$. If $\theta, \theta_{y,z_{\sigma},\epsilon_{1,\sigma}}$ satisfy conditions (a-c) from Theorem \ref{thm:thetainAm}, then
       \begin{equation}\label{eq:O1max}
       \begin{split}
         \lefteqn{O_{\theta,F,\delta,1}(x,\xi)}\\
         & \leq \esssup_{(y,\omega)\in D\times\RR} \left(2V_2(1) \|\theta\|_{\bd L^2_{w_2}(\RR)}\|\theta_{y,z_{\sigma},\epsilon_{1,\sigma}}\|_{\bd L^2_{w_1}(\RR)} + (p+2)E \max_{j=0}^{p+2} \left( \|\theta^{(j)}\|_{\bd L^2_{w_3}(\RR)}\|\theta_{y,z_{\sigma},\epsilon_{1,\sigma}}^{(p-j+2)}\|_{\bd L^2_{w_1}(\RR)} \right)  \right) \tilde{C}Z_{\epsilon},
         \end{split}
   \end{equation}
   with $E:= \int_{\RR} \frac{V_2(1+|\eta|)}{(2\pi(1+|\eta|)^{p+2}}~d\eta$ and $Z_{\epsilon} = \int_\RR (1+|z|)^{-1-\epsilon}~dz$. Similarly, 
   \begin{equation}\label{eq:O2max}
   \begin{split}
         \lefteqn{O_{\theta,F,\delta,2}(y,\omega)}\\
         & \leq \sup_{(z_0,\eta_0)\in Q_{y,\omega}} \sup_{|\epsilon_{1,\sigma}|\leq Cw(\delta)\delta} \left(2V_2(1) \|\theta_{y,z_0,\epsilon_{1,\sigma}}\|_{\bd L^2_{w_2}(\RR)}\|\theta\|_{\bd L^2_{w_1}(\RR)} + (p+2)E \max_{j=0}^{p+2} \left( \|\theta_{y,z_0,\epsilon_{1,\sigma}}^{(j)}\|_{\bd L^2_{w_3}(\RR)}\|\theta^{(p-j+2)}\|_{\bd L^2_{w_1}(\RR)} \right)  \right) \tilde{C}Z_{\epsilon},
         \end{split}
   \end{equation}
   where $\epsilon_{1,\sigma} = w(F(y))(\omega-\eta_{\sigma})$. Clearly, the conditions on $\theta$ imply conditions (a-c) for $\theta$ itself (simply set $l=0$ in ($\tilde{\text{a}}$) and ($\tilde{\text{c}}$)), but $\theta_{y,z_0,\epsilon_{1,\sigma}}$ requires a closer investigation.
   
   Recall 
   \begin{equation}
     \theta_{y,z_0,w(F(y))(\omega-\eta_{\sigma})} =  \theta-\sqrt{\frac{w(F(y))}{w(F(z_0))}}\operatorname{E}_{F(y),w(F(y))(\omega-\eta_{\sigma})}(\bd T_{F(z_0)-F(y)}\theta).
   \end{equation}
   The estimates in the proof of Lemma \ref{lem:continuousdiffeye} hold pointwise, therefore
   \begin{equation}
     |(\Eye (\bd T_{x}\theta))^{(n)}|(s)\leq |(\bd T_{x}\theta)^{(n)}|(s) + \tilde{C} \max_{k=0}^{n-1} \left( |\bd T_{x}\theta^{(k)}|(s) \max_{l=1}^{n-k}(2\pi|\epsilon|)^l w(s)^l \right),
   \end{equation}
   for all $0\leq n\leq p+2$. Consequently, condition ($\tilde{\text{a}}$) on $\theta$ implies condition (a) for $\theta-\tilde{D} \Eye (\bd T_x\theta)$ for all $\tilde{D}> 0$, $x,\epsilon\in\RR$. In particular, $\theta_{y,z_0,w(F(y))(\omega-\eta_{\sigma})}$ satisfies condition (a) for all $(y,\omega),(z_0,\eta_0)\in D\times\RR$. For any $v$-moderate weight function $w_0$ and $X=\bd L^p_{w_0}$ for some $1\leq p\leq \infty$, 
   \begin{equation}
     \|\Eye (\bd T_{x}\theta)\|_X = \|\bd T_{x}\theta\|_X \leq C_v v(x) \|\theta\|_X.
   \end{equation}
   Hence condition (b) on $\theta$ implies condition (b) for $\theta-\tilde{D} \Eye (\bd T_x\theta)$ for all $\tilde{D}> 0$, $x,\epsilon\in\RR$. Finally, by Lemma \ref{lem:continuousdiffeye}, condition ($\tilde{\text{c}}$) on  implies condition (c) for $\theta-\tilde{D} \Eye (\bd T_x\theta)$ and we have shown that the RHS of Eqs. \eqref{eq:O1max} and \eqref{eq:O2max} are valid expressions. 
   
   Let $k\leq p+2$, $\delta>0$ and $(z_0,\eta_0)\in Q_{y,\omega}$. Using Lemma \ref{lem:coverings}, we obtain $|F(z_0)-F(y)| < \delta$ and by the FTC and $w'/w \leq D_1$
   \begin{equation}
     \frac{w(F(z_0))}{w(F(y))} \leq \frac{w(F(y))+\delta D_1w(\delta)w(F(y))}{w(F(y))} = 1+\delta D_1w(\delta),
   \end{equation}
   and similarly $(1+\delta D_1w(\delta))^{-1} \leq \frac{w(F(z_0))}{w(F(y))}$. Recall $|\epsilon_{1,\sigma}|\leq Cw(\delta)\delta$ and use Lemma \ref{lem:continuousdiffeye} to see that for $X= \bd L^2_{w_i}$, $i=1,2,3$,
   \begin{equation}
     w(F(z_0))/w(F(y)) \rightarrow 1,~ (\operatorname{E}_{F(y),\epsilon_{1,\sigma}}\theta)^{(k)} \rightarrow \theta^{(k)} \text{ and } \bd (T_{F(z_0)-F(y)}\theta)^{(k)} \rightarrow \theta^{(k)},
   \end{equation}
   for $\delta \rightarrow 0$, uniformly over $(y,\omega)\in D\times\RR$, $(z_0,\eta_0)\in Q_{y,\omega}$.
   Therefore, their composition is also uniformly continuous and, together with \eqref{eq:diffeyen},
   \begin{equation}
     \max_{k=0}^{p+2} \esssup_{(y,\omega)\in D\times\RR} \sup_{(z_0,\eta_0)\in Q_{y,\omega}}  \sup_{|w(F(y))(\eta_0-\omega)|\leq Cw(\delta)\delta} \|\theta_{y,z_0,w(F(y))(\eta_0-\omega)}^{(k)} \|_X \overset{\delta \rightarrow 0}{\rightarrow} 0.
   \end{equation}
   This shows that Eqs. \eqref{eq:O1max} and \eqref{eq:O2max} tend to $0$ for $\delta \rightarrow 0$, proving the second assertion.
   In particular, we can apply the estimates in the proofs of Lemmata \ref{lem:continuouseye} and \ref{lem:continuousdiffeye} to obtain 
   \begin{equation}
     \max_{k=0}^{p+2} \esssup_{(y,\omega)\in D\times\RR} \sup_{(z_0,\eta_0)\in Q_{y,\omega}} \sup_{|w(F(y))(\eta_0-\omega)|\leq Cw(\delta)\delta}\left\| \theta_{y,z_0,w(F(y))(\eta_0-\omega)}^{(k)}\right\|_X < \infty, \text{ for all } \delta > 0,     
   \end{equation}
   showing that Eqs. \eqref{eq:O1max} and \eqref{eq:O2max} are finite, for all $\delta > 0$, proving the first assertion.   
   \end{proof}
   
   The statements we have just proven specify a set of conditions on $F$, $m$ and $\theta$ such that we can construct atomic decompositions and Banach frames by invoking Theorem \ref{thm:discret2}. That the conditions of Theorem \ref{thm:mainres2} imply the conditions in Theorem \ref{thm:osckernel} and Proposition \ref{pro:boundedCU} is easily seen.

   \begin{proof}[Proof of Theorem \ref{thm:mainres2}]
     Analogous to the proof of Theorem \ref{thm:mainres1}, but use Theorem \ref{thm:osckernel} and Proposition \ref{pro:boundedCU} instead of Theorem \ref{thm:thetainAm}. 
   \end{proof}
   
   \begin{remark}
     Although we only state Theorems \ref{thm:mainres2} and \ref{thm:osckernel}, as well as Proposition \ref{pro:boundedCU}, for the induced $\delta$-cover, it is easily seen that any covering $\widetilde{\mathcal U}$ that satisfies Lemma \ref{lem:coverings}, for $\delta > 0$ small enough, guarantees $\|\text{osc}_{\widetilde{\mathcal U},\Gamma}\|_{\AAm} < \epsilon$ and $\sup_{l,k\in\ZZ}\sup_{(x,\xi),(y,\omega)\in \widetilde{U_{l,k}}} m(x,y,\xi,\omega) \leq C_{m,\widetilde{\mathcal U}}<\infty$. If $\epsilon>0$ is in turn small enough, then Theorem \ref{thm:discret2} can be applied, providing atomic decompositions and Banach frames with respect to $\widetilde{\mathcal U}$.
   \end{remark}
  
   \section{Conclusion and Outlook}
   
   In this contribution, we introduced a novel family of time-frequency representations tailored to a wide range of nonlinear frequency scales. We have shown that the resulting integral transforms are invertible and produce continuous functions on phase space. Under mild restrictions on the chosen  frequency scale, every such representation gives rise to a full family of (generalized) coorbit spaces. Furthermore, through a minor, but important generalization to existing discretization results in generalized coorbit theory, we are able to prove that atomic decompositions and Banach frames can be constructed in a natural way, provided that the system is discretized respecting suitable density conditions.
   
   There still are many open questions regarding the finer structure of coorbit space theory for warped time-frequency representations, e.g. whether the generated coorbit spaces coincide with some known localization spaces. Since the warping functions $F(x) = x$ and $F(x) = \log(x)$ yield short-time Fourier and wavelet transforms, the associated coorbit spaces can be expected to coincide with their classical counterpart. Furthermore, the close relationship between the $\alpha$-transform and the warping functions discussed in Examples \ref{ex:ERB1} and \ref{ex:alpha1} suggests a connection to $\alpha$-modulation spaces that requires closer study.
   
   Another interesting question is the relation between the spaces $\{\theta\in L^2_{\sqrt{w}} ~:~ K_{\theta,F}\in\AAm\}$ and $\{g\circ F^{-1}~:~ g\in\mathcal H^1_v\}$. Clearly, the first space is contained in the second, since $\mathcal G(\theta,F)\subset \mathcal H^1_v$, but at this point it is unclear whether the inclusion is strict.
   
   The construction of (Hilbert space) frames by means of discrete warped time-frequency systems is covered in the first part of this series~\cite{howi14}. Therein generalizations of classical necessary and sufficient frame conditions, previously known to hold for Gabor and wavelet systems,  are recovered. A special focus in~\cite{howi14} is the construction of tight frames with bandlimited elements, also illustrated through a series of examples.        
      
  \section*{Acknowledgment}
  This work was supported by the Austrian Science Fund (FWF)
  START-project FLAME (``Frames and Linear Operators for Acoustical
  Modeling and Parameter Estimation''; Y 551-N13) and the Vienna Science and Technology Fund (WWTF) 
  Young Investigators project CHARMED (``Computational harmonic analysis of high-dimensional biomedical data''; VRG12-009).
  

 \bibliographystyle{amsplain}
 \providecommand{\bysame}{\leavevmode\hbox to3em{\hrulefill}\thinspace}
\providecommand{\MR}{\relax\ifhmode\unskip\space\fi MR }
\providecommand{\MRhref}[2]{%
  \href{http://www.ams.org/mathscinet-getitem?mr=#1}{#2}
}
\providecommand{\href}[2]{#2}


 \end{document}